\newcommand*{\MRref}[2]{ \href{http://www.ams.org/mathscinet-getitem?mr=#1}{MR #1}}
\newcommand*{\arxiv}[1]{\href{http://www.arxiv.org/abs/#1}{arXiv: #1}}
\newcommand*{\magc}[1]{{\color{magenta}#1}}
\numberwithin{equation}{section}
\theoremstyle{plain}
\newtheorem{theorem}[equation]{Theorem}
\newtheorem{lemma}[equation]{Lemma}
\newtheorem{proposition}[equation]{Proposition}
\newtheorem{corollary}[equation]{Corollary}
\theoremstyle{definition}
\newtheorem{definition}[equation]{Definition}
\newtheorem{convention}[equation]{Convention}
\theoremstyle{remark}
\newtheorem{remark}[equation]{Remark}
\newtheorem{example}[equation]{Example}
\newtheorem{question}[equation]{Question}
\DeclareMathOperator{\Aut}{Aut}
\DeclareMathOperator{\cspn}{\overline{span}}
\DeclareMathOperator{\spn}{span}
\DeclareMathOperator{\Ind}{Ind}
\DeclareMathOperator{\Mor}{\mathrm{Mor}}
\DeclareMathOperator{\ext}{\mathrm{ext}}
\renewcommand{\top}{{\operatorname{top}}}
\renewcommand{\H}{\mathcal H}
\newcommand*{\nb}{\nobreakdash}
\newcommand*{\Star}{\(^*\)\nobreakdash-}
\newcommand*{\dd}{\,d}
\newcommand{\cB}{\mathcal L} 
\newcommand*{\C}{\mathbb C}
\newcommand*{\N}{\mathbb N}
\newcommand*{\Lb}{\mathcal L}
\newcommand*{\K}{\mathcal K}
\newcommand*{\M}{\mathcal{M}} 
\newcommand*{\ZM}{\mathcal{ZM}}
\newcommand*{\cont}{C}
\newcommand*{\contz}{\cont_0}
\newcommand*{\contc}{\cont_c}
\newcommand*{\id}{\textup{id}}
\newcommand*{\Ad}{\textup{Ad}}
\newcommand*{\U}{\mathcal U}
\newcommand*{\E}{\mathcal E}
\newcommand*{\defeq}{\mathrel{\vcentcolon=}}
\newcommand*{\congto}{\xrightarrow\sim}
\newcommand*{\braket}[2]{\langle#1\!\mid\!#2\rangle}
\newcommand*{\sbe}{\subseteq} 
\newcommand*{\F}{\mathcal F}
\newcommand*{\cstar}{\texorpdfstring{$C^*$\nobreakdash-\hspace{0pt}}{*-}}
\newcommand*{\into}{\hookrightarrow}
\newcommand*{\onto}{\twoheadrightarrow}
\newcommand*{\red}{\mathrm{r}}
\renewcommand*{\max}{\mathrm{max}}
\newcommand*{\un}{\mathrm{u}}
\newcommand*{\Cor}{\mathfrak{Corr}}
\newcommand{\as}{\operatorname{as}}
\newcommand{\Res}{\operatorname{Res}}
\newcommand{\lk}{\langle}
\newcommand{\rk}{\rangle}
\newcommand{\Rep}{{\operatorname{Rep}}}
\newcommand{\dual}{\widehat}
\newcommand{\dualG}{\widehat{G}}
\newcommand{\Manoa}{M\=anoa}
\newcommand{\Hawaii}{Hawai\kern.05em`\kern.05em\relax i}
\begin{document}
\title[Exotic crossed products]{Exotic crossed products}

\author{Alcides Buss}
\email{alcides@mtm.ufsc.br}
\address{Departamento de Matem\'atica\\
 Universidade Federal de Santa Catarina\\
 88.040-900 Florian\'opolis-SC\\
 Brazil}

\author{Siegfried Echterhoff}
\email{echters@math.uni-muenster.de}
\address{Mathematisches Institut\\
 Westf\"alische Wilhelms-Universit\"at M\"un\-ster\\
 Einsteinstr.\ 62\\
 48149 M\"unster\\
 Germany}

\author{Rufus Willett}
\email{rufus@math.hawaii.edu}
\address{Mathematics Department\\
 University of \Hawaii~at \Manoa\\
Keller 401A \\
2565 McCarthy Mall \\
 Honolulu\\
 HI 96822\\
USA}

\begin{abstract}
An exotic crossed product is a way of associating a \cstar{}algebra to each \cstar{}dynamical system that generalizes the well-known universal and reduced crossed products.  Exotic crossed products provide natural generalizations of, and tools to study, exotic group \cstar{}algebras as recently considered by Brown-Guentner and others.  They also form an essential part of a recent program to reformulate the Baum-Connes conjecture with coefficients so as to mollify the counterexamples caused by failures of exactness.

In this paper, we survey some constructions of exotic group algebras and exotic crossed products.  Summarising our earlier work, we single out a large class of crossed products --- the \emph{correspondence functors} --- that have many properties known for the maximal and reduced crossed products: for example, they extend to categories of equivariant correspondences, and have a compatible descent morphism in $KK$-theory.   Combined with known results on $K$-amenability and the Baum-Connes conjecture, this allows us to compute the $K$-theory of many exotic group algebras.  It also gives new information about the reformulation of the Baum-Connes Conjecture mentioned above.  Finally, we present some new results relating exotic crossed products for a group and its closed subgroups, and discuss connections with the reformulated Baum-Connes conjecture.

\end{abstract}

\subjclass[2010]{46L55, 46L08, 46L80}

\thanks{Supported by Deutsche Forschungsgemeinschaft  (SFB 878, Groups, Geometry \& Actions), by CNPq/CAPES -- Brazil, and by the US NSF (DMS 1401126).}

\keywords{Exotic crossed products, Baum-Connes conjecture, correspondences, exotic group algebras.}

\maketitle

\section{Introduction}

Given a C*-dynamical system $(A,G,\alpha)$, there are two classical ways to assign a C*-algebra to it which reflect important properties of the
 system: the universal (or maximal) crossed product $A\rtimes_{\alpha,\un}G$, which is universal for covariant representations,  and the reduced crossed product $A\rtimes_{\alpha,\red}G$ defined as the image of  $A\rtimes_{\alpha,\un}G$ under the regular representation. If $A=\C$ we recover the maximal and reduced group C*-algebras
$C_\un^*(G)$ and $C_\red^*(G)$ of the group $G$.

Recently there has been a growing interest in studying
more general ``exotic'' group algebras $C_\mu^*(G)$ and crossed products $A\rtimes_{\alpha,\mu}G$ which are
defined as completions of the convolution algebra $C_c(G)$ (resp. $C_c(G,A)$) with respect
to C*-norms which lie between the full and reduced norms $\|\cdot\|_\un$ and $\|\cdot\|_\red$.  For exotic group algebras, this started with the work \cite{Brown-Guentner:New_completions} of Brown and Guentner: given a discrete group $G$ and an (algebraic) ideal
$E$ in $l^\infty(G)$, Brown and Guentner assign an exotic group algebra $C^*_{E,BG}(G)$ by looking at representations of $G$ that have `enough' matrix coefficients in
$E$.  Later work of Okayasu (\cite{Okayasu:Free-group}) shows that
this theory is extremely rich: If $G$ is a discrete group which contains the free group $F_2$ on two generators
as a subgroup, then there are uncountably many different exotic group algebras $C_{E}^*(G)$ of this type.  Wiersma (\cite{Wiersma})
subsequently extended the framework of Brown and Guentner to general locally compact groups, and showed an analogous result to Okayasu's for the connected group $SL(2,\mathbb R)$.  Motivated by the work of Brown and Guentner, the paper \cite{Kaliszewski-Landstad-Quigg:Exotic-coactions} of Kaliszewski, Landstad and Quigg
 introduced a second construction of exotic group algebras based on the dual pairing of $C^*_\un(G)$ with the Fourier-Stieltjes algebra $B(G)$, and studied the connection to coactions.
We shall give a detailed introduction to exotic group algebras in \S2 below and we use this opportunity
to clarify the differences between the approaches of Brown-Guentner and of Kaliszewski-Landstad-Quigg.

Work on exotic crossed products starts also with Brown-Guentner (\cite{Brown-Guentner:New_completions}) and Kaliszewski-Landstad-Quigg (\cite{Kaliszewski-Landstad-Quigg:Exotic-coactions}) who both indicated ways to associate exotic crossed-products $(A,\alpha)\mapsto A\rtimes_{\alpha, \mu_E}G$
to appropriate spaces of matrix coefficients $E$. The constructions of exotic group algebras of Brown-Guentner and of Kaliszewski-Landstad-Quigg are closely related, but their constructions of exotic crossed products are fundamentally quite different in nature; nonetheless, it was observed later (\cite{Buss-Echterhoff:Exotic_GFPA,Baum-Guentner-Willett}) that both constructions are functorial in the sense that
every $G$-equivariant \Star{}homomorphism $\Phi:A\to B$  between two $G$-algebras $(A,\alpha)$ and $(B,\beta)$
descends to a \Star{}homomorphism $\Phi\rtimes_{\mu_E}G:A\rtimes_{\alpha,\mu_E}G\to B\rtimes_{\beta,\mu_E}G$ in a canonical way.

Such exotic crossed product \emph{functors} were formally introduced by Baum, Guentner, and Willett in \cite{Baum-Guentner-Willett}
as part of a program to ``fix'' the Baum-Connes conjecture for computing the $K$-theory of crossed products. The original conjecture
(in the general version with coefficients) claimed that for each system $(A,G,\alpha)$ a certain {\em assembly map}
$$\as_{(G,A)}^\red: K_*^{\operatorname{top}}(G;A)\to K_*(A\rtimes_{\alpha,\red}G)$$
should always be an isomorphism, where $K_*^{\operatorname{top}}(G;A)$ is the {\em topological $K$-theory of $G$
with coefficient $A$} (we refer to \cite{Baum-Connes-Higson:BC} for the construction of this group and for a discussion of the
far reaching consequences for groups $G$ that satisfy the conjecture).
 Unfortunately, the conjecture is not true for all groups: the known counterexamples are all related to the failure
of $G$ being exact in the sense of Kirchberg and Wassermann.
In order to fix the conjecture, the main idea of
Baum, Guentner, and Willett was to replace the reduced crossed-product functor by the {\em smallest exact Morita compatible} crossed-product functor $(A,\alpha)\mapsto A\rtimes_{\alpha,\E}G$. Here Morita compatibility basically means that
the construction preserves {\em stabilisation} which is needed to construct an $E$-theory descent used
 for the construction of a direct assembly map
$$\as_{(G,A)}^\E: K_*^{\operatorname{top}}(G;A)\to K_*(A\rtimes_{\alpha,\E}G);$$
exactness and Morita compatibility are both predicted on the level of $K$-theory by the conjecture, so are natural assumptions.
The new conjecture asserts that this  map is always an isomorphism. In fact, Baum, Guentner, and Willett show
in \cite{Baum-Guentner-Willett} that several of the known counterexamples for the original conjecture satisfy
the new conjecture and, so far, there are no known counterexamples for the reformulated conjecture.
\medskip

Motivated by the above described developments, the authors of this paper started in
\cite{Buss-Echterhoff-Willett} a more systematic study
of various functorial properties of exotic crossed-product functors $(A,\alpha)\mapsto A\rtimes_{\alpha,\mu}G$.  Recall first that a correspondence between two
C*-algebras  $A$ and $B$ consists of a Hilbert $B$-module $\E_B$ equipped with a left action $\Phi:A\to \mathcal L(\E_B)$ of
$A$ on $\E_B$.
In case of the universal or reduced crossed-products $\rtimes_\un$ or $\rtimes_\red$, respectively,
it has been known for a long time that they are
\begin{enumerate}
\item functorial for {\em generalised homomorphisms} in the sense that any $G$-equivariant \Star{}homomorphism
$\Phi:A\to \M(B)$ canonically  induces \Star{}homomorphisms
$\Phi\rtimes_\mu G: A\rtimes_{\alpha,\mu}G\to\M(B\rtimes_{\beta,\mu}G)$, $\mu=\un,\red$.
\item functorial for correspondences: This means that any $G$-equivariant \mbox{$A-B$} correspondence $\E_B$ admits a {\em descent}
$\E_B\rtimes_\mu G$ which is an $A\rtimes_{\alpha,\mu}G-B\rtimes_{\beta,\mu}G$  correspondence and  this construction
preserves composition of correspondences.
\item functorial for Kasparov's $KK$-theory: This simply means that there is a descent homomorphism
$$j_G^\mu: KK^G(A,B)\to KK(A\rtimes_{\alpha,\mu}G, B\rtimes_{\beta,\mu}G), \quad\mu=\un,\red,$$
which is compatible with taking Kasparov products.
\end{enumerate}

In \S\S 3--5 we give a survey of the general theory of exotic crossed products and, in particular,  the
results obtained  in \cite{Buss-Echterhoff-Willett} in which we give characterisations
of  crossed products which enjoy strong functorial properties as described above.
As a sample,  it turned out that functoriality for generalised homomorphisms is equivalent to the  {\em ideal property}
which asserts that for each $G$-invariant ideal $I\subseteq G$  the inclusion $\iota:I\into A$ descends
to a faithful morphism $\iota\rtimes_{\mu}G:I\rtimes_\mu G\into A\rtimes_\mu G$. Similarly, functoriality for correspondences
is equivalent to the  {\em projection property} which asserts that for every $G$-invariant
projection $p\in \M(A)$ the inclusion $\iota:pAp\into A$ descends to an inclusion
 $\iota\rtimes_{\mu}G: pAp\rtimes_\mu G\into A\rtimes_\mu G$. Another useful characterisation of
  correspondence functors is functoriality with respect to $G$-equivariant completely positive maps, and as an application of this
 we show here as a new result that correspondence crossed products behave well with respect to taking tensor products with
 nuclear C*-algebras:  If $\rtimes_\mu$ is a correspondence functor, then
$$(A\otimes B)\rtimes_{\alpha\otimes \id_B,\mu}G\cong (A\rtimes_{\alpha,\mu}G)\otimes B$$
 for any nuclear C*-algebra $B$. As an application, we can show that under suitable assumptions it turns
 out that any crossed-product functor which is faithful in the sense that it does not send nonzero objects
 to $\{0\}$ must dominate the reduced crossed product functor. This motivates the requirement that exotic crossed products
lie between the maximal and the reduced ones.

It is straightforward to see that the functors constructed by Brown-Guentner and Kaliszewski-Landstad-Quigg (which we shall call {\em BG-functors} or {\em KLQ-functors}, respectively)
satisfy the ideal property. Moreover, all KLQ-functors are correspondence functors, but a BG-functor is a correspondence functor if and only if it coincides with the universal crossed-product functor.

It was shown in \cite{Buss-Echterhoff-Willett} that every correspondence functor admits a descent in $KK$-theory.
Adapting ideas of Cuntz and
 Julg-Valette from \cite{Cuntz:Kamenable} and \cite{Julg-Valette:Kamenable} this fact has then been used to show that
 for any $K$-amenable group in the sense of Cuntz and for any correspondence functor $\rtimes_\mu$, the canonical morphisms
 $$A\rtimes_{\alpha,u}G\onto A\rtimes_{\alpha,\mu}G\onto A\rtimes_{\alpha,\red}G$$
 are $KK$-equivalences. In particular, this implies that for $G=F_n$, the free group in $n$ generators, $n\in \mathbb N\cup \{\infty\}$,
all of the uncountably many different exotic group algebras $C^*_{E}(F_n)$ corresponding to nontrivial $G$-invariant ideals
$E\subseteq B(G)$ are $KK$-equivalent. The result would be false without the assumption that $E$ is an ideal, and indeed it is crucial for us that an exotic group algebra is of the form $\C\rtimes_\mu G$ for a correspondence functor $\rtimes_\mu$ if and only if it is of the form $C^*_E(G)$ for a $G$-invariant weak$^*$-closed ideal $E$ in $B(G)$.

 We also report on the result that the minimal exact Morita compatible
functor $\rtimes_{\E}$ coincides, at least on separable systems, with the minimal exact correspondence functor $\rtimes_{\E_\Cor}$,
whose existence is shown in \cite{Buss-Echterhoff-Willett}. Hence our results allow one to use the full force of Kasparov's
equivariant $KK$-theory for the study of the reformulated Baum-Connes conjecture.

In the remaining sections (\S\S6--8) we report on some new results about the relation of crossed-product functors of
a group $G$ with crossed-product functors of closed subgroups $H$ of $G$. Based on Green's imprimitivity theorem
we give a procedure of restricting  crossed-product functors from $G$ to $H$.
Conversely, given a crossed-product functor $\rtimes_{\nu}$  for $H$, we describe two different ways to
assign a crossed-product functor for $G$ to it: one is the induced functor $\rtimes_{\Ind\nu}$ and  one is the extended
functor $\rtimes_{\ext\nu}$. All these constructions preserve the ideal property (hence functoriality for generalised morphisms)
and send correspondence functors to correspondence functors. Restriction and extension preserve exactness
in general, but this can fail for induction.
After introducing the restriction, induction and extension procedures in \S 6 we show in \S 7 that the
restriction of the minimal exact correspondence functor $\rtimes_{\E^G_\Cor}$ of $G$ to a closed {\em normal}
subgroup $N$ of $G$ always coincides with the minimal exact correspondence functor $\rtimes_{\E^N_\Cor}$ of $N$.
Using results of Chabert and Echterhoff from \cite{Chabert-Echterhoff:Permanence} this implies that the
validity of the reformulated Baum-Connes conjecture for $G$ always passes to closed normal subgroups $N\subseteq G$.
Another result in this direction (obtained in \S 6), shows that for a cocompact closed subgroup $H\subseteq G$
the induction of $\rtimes_{\E^H_\Cor}$ to $G$ coincides with  $\rtimes_{\E^G_\Cor}$.

Finally, in \S 8, we close this paper with some questions and remarks related to induction, restriction and extension
in connection with more general permanence properties of the reformulated Baum-Connes conjecture. Unfortunately, so far it seems that we have more questions than solutions!
\medskip

Part of the work on this paper took place during a visit of the first and third authors to the Westf\"{a}lische Wilhelms-Universit\"{a}t, M\"{u}nster.  We would like to thank this institution for its hospitality.

\label{sec:introduction}\label{ex alg sec}

\section{Exotic group algebras}\label{sec:preliminaries}

Let $G$ be a locally compact group equipped with a fixed left invariant Haar measure.  We are interested in \cstar{}algebras connected to strongly continuous unitary representations $u:G\to \U(\H)$ of $G$, which we call simply \emph{representations} of $G$.

Let $C_c(G)$ denote the space of continuous, compactly supported, complex-valued functions on $G$, which is a \Star{}algebra when equipped with the product and involution
$$
f*g(t)\defeq\int_G f(s)g(s^{-1}t)\dd{s}\quad\mbox{and}\quad f^*(t)\defeq \Delta(t)^{-1}\overline{f(t^{-1})},
$$
where $\Delta:G\to (0,\infty)$ is the modular function.  Any representation $u:G\to \U(\H)$ integrates to a \Star{}representation $\tilde{u}:C_c(G)\to \cB(\H)$ defined by
$$
\tilde{u}(f)\defeq \int_G f(s)u_s \dd{s}.
$$
In what follows we shall usually omit the tilde in our notations, thus using the same notation for the unitary representation $u$ and its integrated form.

The following two completions of $C_c(G)$ are intimately tied to the representation theory of $G$, and for this and other reasons have been widely studied.

\begin{definition}\label{un red}
The \emph{universal group algebra} $C^*_\un(G)$ is the completion of $C_c(G)$ with respect to the norm
$$
\|f\|_\un\defeq \sup\{\|u(f)\|:u \text{ a representation of $G$}\}.
$$
Let $\lambda:G\to \U(L^2(G))$ be the left regular representation: $\lambda_s(\xi)(t)\defeq \xi(s^{-1}t)$.  The \emph{reduced group algebra} $C^*_\red(G)$ is the completion of $C_c(G)$ with respect to the norm
$$
\|f\|_\red\defeq \|\lambda(f)\|.
$$
\end{definition}

\begin{remark}\label{uni prop}
As every \cstar{}algebra $A$ admits a faithful nondegenerate representation on some Hilbert space,
every strictly continuous homomorphism $u\colon G\to \U\M(A)$ integrates uniquely to a nondegenerate \Star{}homomorphism $C^*_\un(G)\to \M(A)$.
Conversely, every such homomorphism is the integrated form of a representation of $G$ and this universal property characterises $C^*_\un(G)$ up to isomorphism.
\end{remark}

\begin{definition}\label{ex gp alg}
A \emph{group algebra} $C_\mu^*(G)$  is any \cstar{}algebra completion of $C_c(G)$ for a norm  $\|\cdot\|_\mu$
such that for all $f\in C_c(G)$
$$
\|f\|_\un \geq \|f\|_\mu\geq \|f\|_\red.
$$
A group algebra $C^*_\mu(G)$ is \emph{exotic} if $\|\cdot\|_\mu$ is not equal to either the maximal or reduced norms.
\end{definition}

It is perhaps not immediately clear that interesting examples exist!  There have been two recent approaches to this in the literature that we discuss below, the first due to Brown and Guentner
\cite{Brown-Guentner:New_completions}, and the second due to Kaliszewski, Landstad, and Quigg \cite[Section 3]{Kaliszewski-Landstad-Quigg:Exotic}.  We first discuss the construction of Brown-Guentner, and then describe theorems of Okayasu and Wiersma that show that the theory of exotic group algebras is very rich.

\begin{definition}\label{bg def}
Let $D$ be a set of complex-valued functions on $G$.  A representation $u:G\to\U(\H)$ of $G$ is called a \emph{$D$-representation} if there exists a dense subspace $\H_0$ of $\H$ such that for all $\xi,\eta\in \H_0$, the \emph{matrix coefficient}
$$
G\to\C,\quad g\mapsto \langle \xi,u_g\eta\rangle
$$
is in $D$.\footnote{We always assume inner products on Hilbert spaces to be linear in the {\em second} variable.}  Define a \cstar{}seminorm on $C_c(G)$ by
$$
\|f\|_{D,BG}:=\sup\{\|u(f)\|~|~u \text{ a $D$-representation}\}.
$$
Define $C^*_{D,BG}(G)$ to be the Hausdorff completion of $C_c(G)$ for the seminorm $\|~\cdot~\|_{D,BG}$.
\end{definition}

\begin{remark}\label{bg hist}
\begin{enumerate}
\item In the original version of their definition \cite[Definition 2.2]{Brown-Guentner:New_completions}, Brown and Guentner work only with discrete groups, but their definition extends in an obvious way to all locally compact groups as already pointed out by Wiersma \cite[Section 3]{Wiersma}.  They moreover assume that $D$ is an algebraic ideal in the space $l^\infty(G)$; this is important for their applications.  It is clear, however, that the definition makes sense without these additional assumptions; we left them out as the extra generality seems harmless, and is occasionally useful.
\item Let $B(G)$ denote the \emph{Fourier-Stieltjes algebra} of $G$: the space of all matrix coefficients of $G$, which turns out to be a \Star{}algebra of bounded functions on $G$ for the natural pointwise operations;
we will have more to say on this below.
The \cstar{}algebra $C^*_{D,BG}(G)$ clearly only depends on the intersection $D\cap B(G)$.  It is often convenient to pass to this intersection as it is the algebraic and topological properties of $D\cap B(G)$ as a subset of the algebra $B(G)$ that are really relevant for the structure of $C^*_{D,BG}(G)$.  For example, when Brown and Guentner use that $D$ is an ideal in $l^\infty(G)$ for certain of their applications, what is really relevant is that $D\cap B(G)$ is an ideal in $B(G)$; the latter is a strictly weaker property. On the other hand, the passage from $l^\infty(G)$ to $B(G)$ also loses some concreteness, as it is generally difficult to tell when a given function is in $B(G)$.
\item The completion $C^*_{D,BG}(G)$ is a group \cstar{}algebra in our sense if and only if $\|\cdot\|_{D,BG}$ dominates the reduced norm.  For this, it is sufficient (for example) that $D$ contains all compactly supported functions in $B(G)$.
\end{enumerate}
\end{remark}

We now turn to a natural class of examples based on decay of matrix coefficients.  For $p\in [1,\infty)$, write $C^*_p(G)$ as shorthand for $C^*_{L^p(G),BG}(G)$.  Note that if $p<q$ then $L^p(G)\cap B(G)\subseteq L^q(G)\cap B(G)$ and thus the identity map on $C_c(G)$ extends to a surjective \Star{}homomorphism
\begin{equation}\label{pq quotient}
C^*_q(G)\onto C^*_p(G).
\end{equation}
The following theorem is due to Okayasu for $F_2$ \cite{Okayasu:Free-group} and Wiersma for $SL(2,\mathbb{R})$ \cite{Wiersma}: it shows in particular that the construction above gives rise to many interesting exotic group algebras.

\begin{theorem}\label{lp diff}
Say $G$ is either the free group $F_2$ on two generators, or $SL(2,\mathbb{R})$.  Let $p>q\geq 2$.  Then the canonical quotient map in line \eqref{pq quotient} above is not injective.
\end{theorem}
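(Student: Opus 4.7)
To show the map $C^*_p(G) \onto C^*_q(G)$ is not injective, it suffices to produce a representation $\pi$ of $G$ and a function $f \in C_c(G)$ with $\|\pi(f)\| > \|f\|_q$ while $\pi$ extends continuously to $C^*_p(G)$, so that $\|\pi(f)\| \leq \|f\|_p$; this forces $\|f\|_p > \|f\|_q$. The representations I would use are GNS representations $\pi_\phi$ associated with explicit positive-definite functions $\phi \in B(G)$ chosen so that $\phi \in L^p(G) \setminus L^q(G)$. The easy half is that whenever $\phi \in L^p \cap B(G)$, the GNS representation $\pi_\phi$ is an $L^p$-representation in the sense of Definition~\ref{bg def}: on the dense subspace $\pi_\phi(C_c(G))\xi_\phi \subseteq \H_\phi$, every matrix coefficient is a finite linear combination of left-and-right translates of $\phi$, so it lies in $L^p$. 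Hence $\pi_\phi$ extends to $C^*_p(G)$.

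\textbf{The families.} For $G = F_2$, I would use Haagerup's family $\phi_r(g) \defeq r^{|g|}$, positive definite for $0 \leq r < 1$ by Haagerup's theorem. Since $|\{g \in F_2 : |g| = n\}| = 4 \cdot 3^{n-1}$, a geometric-series calculation gives $\phi_r \in \ell^p(F_2)$ iff $r < 3^{-1/p}$. For $G = SL(2,\mathbb{R})$, I would use the spherical complementary series $\pi_s$ ($s \in (0,1)$): its spherical matrix coefficient $\phi_s$ has a Cartan asymptotic governed by the Harish-Chandra formula, and a Haar-measure integration in the $KAK$-decomposition yields $\phi_s \in L^p(G)$ iff $p > 2/(1-s)$. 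In both cases the threshold varies continuously in the parameter, so for any $p > q \geq 2$ one can choose the parameter so that $\phi \in L^p \setminus L^q$. By the easy half, the corresponding $\pi_\phi$ already extends to $C^*_p(G)$.

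\textbf{Non-factorization through $C^*_q$ and main obstacle.} What remains is the technical core: to show $\pi_\phi$ does \emph{not} extend continuously to $C^*_q(G)$. If it did, then $\phi = \langle \xi_\phi, \pi_\phi(\cdot)\xi_\phi\rangle$ would define a state on $C^*_q(G)$, so $\phi$ would lie in the predual $B_q(G) \defeq C^*_q(G)^*$. Standard duality (the KLQ framework discussed in \S 2) identifies the positive-definite part of $B_q(G)$ with the weak-$*$ closure in $B(G)$ of positive-definite functions in $L^q \cap B(G)$, and one has to rule out $\phi$ being in this closure. For $F_2$ this is Okayasu's contribution and rests on explicit operator-norm estimates for the radial sums $\sum_{|g|=n}\delta_g$ in $C^*_q(F_2)$, essentially Haagerup--Leinert cancellation on the Cayley tree. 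For $SL(2,\mathbb{R})$ it is Wiersma's contribution and proceeds via the Kunze--Stein phenomenon together with Harish-Chandra's Plancherel formula. This weak-$*$ separation step is the technical heart of the theorem and is the place where group-specific harmonic analysis (combinatorics of the tree, respectively spherical/Plancherel analysis) is unavoidable; the parametric construction, the counting, and the factorization through $C^*_p$ are essentially formal by comparison.
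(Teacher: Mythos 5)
Your outline is correct and is essentially the argument of the sources the paper itself relies on: the paper gives no proof of Theorem \ref{lp diff}, attributing it to Okayasu for $F_2$ and Wiersma for $SL(2,\mathbb{R})$ and noting that the key inputs are Haagerup's analysis of $C^*_\red(F_2)$ and the Kunze--Stein phenomenon, which are exactly the two places where you (correctly) defer the weak-$*$ separation step. The formal parts you supply --- the GNS representations of positive-definite $\phi\in L^p\setminus L^q$, the factorization through $C^*_p(G)$, and the parameter counts for $r^{|g|}$ and the complementary series --- are accurate.
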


\begin{remark}
\begin{enumerate}
\item All the \cstar{}algebras $C^*_p(SL(2,\mathbb{R}))$ for $p\in (2,\infty)$ are in fact \emph{abstractly} isomorphic, as one can see by combining Wiersma's analysis \cite{Wiersma} with Mili\v{c}i\'{c}'s description of the structure of $C^*_\un(SL(2,\mathbb{R}))$ \cite{Milicic}.
    We do not know if $C^*_p(F_2)$ is abstractly isomorphic to $C^*_q(F_2)$ for any distinct $p,q$ in $(2,\infty)$.
\item The key ingredients in the proof of Theorem \ref{lp diff} are quite deep facts from harmonic analysis: for $F_2$ the result relies on Haagerup's study of $C^*_\red(F_2)$ in \cite{Haagerup}, while for $SL(2,\mathbb{R})$ the proof relies on aspects of the Kunze-Stein phenomenon \cite{Kunze-Stein}.
\item The result of Okayasu extends to all (non-amenable) discrete groups that contain $F_2$ as a subgroup, and conceivably to all non-amenable discrete groups.  However, Wiersma's result certainly does not extend to all non-amenable connected groups.  Indeed for any $n> 2$, it follows from results of Scaramuzzi \cite[Theorem III.3.3]{Scaramuzzi} (see also the discussion in \cite[Section V.3.3]{Howe}) that if $G=SL(n,\mathbb{R})$, then $C^*_p(G)=C^*_q(G)$ for all $p,q\in (n,\infty)$.
\end{enumerate}
\end{remark}

Having established that many interesting exotic group algebras exist, let us turn to the construction of Kaliszeswki-Landstad-Quigg.  This gives another perspective that will be especially convenient when we come to discuss exotic crossed products.
In order to do this, we need a little more background on the Fourier-Stieltjes algebra $B(G)$.
Recall that $B(G)$ consists of all matrix coefficients of $G$, that is, functions $\phi:G\to\C$ of the form $\phi(g)=\langle \xi,u_g\eta\rangle$ for some representation $u:G\to\U(\H)$ and vectors $\xi,\eta\in \H$; note that elements of $B(G)$ are necessarily bounded and continuous.  Straightforward algebraic checks based on the fact that one can take direct sums, contragredients, and tensor products of representations show $B(G)$ is a \Star{}algebra under the usual pointwise operations.  Note also that $B(G)$ is invariant under the $G$-actions on functions induced by the left and right translation actions of $G$ on itself.  For brevity, we say that a collection $E$ of functions on $G$ is \emph{translation invariant} if it is preserved by the actions on functions induced by the left and right translation actions of $G$ on itself.

There is a pairing between $C_c(G)$ and $B(G)$ defined by
$$
\langle f,\phi\rangle \defeq\int_G f(s)\phi(s)\dd{s},\quad f\in C_c(G), \phi\in B(G).
$$
This clearly extends to a bilinear pairing between $C^*_\un(G)$ and $B(G)$.  Moreover, it follows from the identification of unitary representations of $G$ and \Star{}representations of $C^*_\un(G)$, together with the GNS construction, that this pairing identifies $B(G)$ with the dual space $C_\un^*(G)'$ of $C^*_\un(G)$.  We equip $B(G)$ with the norm, and also weak* topology, coming from this identification.  See \cite{Eymard} for more information on $B(G)$.

Let now $\U(\M(C^*_\un(G)))$ be the unitary group of the multiplier algebra of $C^*_\un(G)$.  There is a \emph{universal representation} $u_G:G\to \U(\M(C^*_\un(G)))$ defined on elements of $C_c(G)$ by
\begin{equation}\label{uni rep}
(u_G(t)f)(s)=f(t^{-1}s),\quad (fu_G(t))(s)=f(st^{-1})\Delta(t^{-1}).
\end{equation}
The following lemma is straightforward to check directly; it is essentially the same as \cite[Lemma 3.1]{Kaliszewski-Landstad-Quigg:Exotic}.
\begin{lemma}\label{ti id}
Let $E$ be a subspace of $B(G)$, and $I\defeq \{a\in C^*_\un(G)~|~\langle a,\phi\rangle=0\text{ for all } \phi\in E\}$ be its pre-annihilator.  Then the following are equivalent:
\begin{enumerate}
\item $E$ is translation invariant;
\item $I$ is invariant under left and right multiplication by the image of $G$ under $u_G$;
\item $I$ is an ideal in $C_\un^*(G)$.
\end{enumerate}
\end{lemma}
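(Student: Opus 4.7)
The plan is to reduce everything to a duality calculation describing what $u_G(t)\cdot$ and $\cdot u_G(t)$ become on $B(G)$ under the pairing $\langle\cdot,\cdot\rangle$, and then combine with the bipolar theorem and with the density of $C_c(G)$ in $C^*_\un(G)$.

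First I would carry out the key computation. For $f\in C_c(G)$, $\phi\in B(G)$ and $t\in G$, using \eqref{uni rep} and an elementary change of variables (using the Haar measure and, on the right, the modular function $\Delta$), one obtains
\begin{equation*}
\langle u_G(t)f,\phi\rangle = \int_G f(s)\phi(ts)\dd s \qquad\text{and}\qquad \langle f u_G(t),\phi\rangle = \int_G f(s)\phi(st)\dd s.
\end{equation*}
Hence left (resp.\ right) multiplication by $u_G(t)$ on $C^*_\un(G)$ is the transpose of the left (resp.\ right) translation operation on $B(G)$. By density of $C_c(G)$ in $C^*_\un(G)$ and continuity of the pairing, the same relations hold with $f$ replaced by any $a\in C^*_\un(G)$.

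For (1)$\Leftrightarrow$(2), I would use this transposition together with the bipolar theorem. If $E$ is translation invariant, then for any $a\in I$, $\phi\in E$ and $t\in G$, $\langle u_G(t)a,\phi\rangle=\langle a,\text{translate of }\phi\rangle=0$, and similarly from the right; so $I$ is invariant under both one-sided multiplications by $u_G(G)$. Conversely, if $I$ is so invariant, the same calculation shows that every translate of every $\phi\in E$ annihilates $I$, hence lies in $I^\perp=\overline{E}^{w^*}$; so the weak$^*$ closure of $E$ (which has the same pre-annihilator as $E$) is translation invariant, and in the statement of the lemma one may harmlessly identify $E$ with its weak$^*$ closure.

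For (2)$\Leftrightarrow$(3), the implication (3)$\Rightarrow$(2) is immediate, because any closed two-sided ideal in a $C^*$-algebra is automatically invariant under left and right multiplication by elements of its multiplier algebra, in particular by $u_G(G)\subset\U(\M(C^*_\un(G)))$. For (2)$\Rightarrow$(3), note that $I$, being a pre-annihilator, is norm-closed. For $a\in I$ and $f\in C_c(G)$, one has $fa=\int_G f(t)u_G(t)a\,\dd t$ as a norm-convergent Bochner integral in $C^*_\un(G)$; since each integrand $u_G(t)a$ lies in $I$ by hypothesis, the integral (being a norm limit of Riemann sums of elements of $I$) lies in $I$ as well. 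Symmetrically $af\in I$. As $C_c(G)$ is dense in $C^*_\un(G)$, this gives $C^*_\un(G)\cdot I\cup I\cdot C^*_\un(G)\subseteq I$, so $I$ is a two-sided ideal. The main thing to keep straight is just the bookkeeping of the modular function in the duality computation; everything else is a formal consequence of duality and density.
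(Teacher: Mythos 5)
Your proof is correct and is precisely the direct verification the paper has in mind (the paper omits the argument, deferring to Kaliszewski--Landstad--Quigg): the duality computation showing that left/right multiplication by $u_G(t)$ transposes to left/right translation on $B(G)$ (with the modular function cancelling), combined with the Bochner-integral argument $fa=\int_G f(t)u_G(t)a\,\dd t$ for (2)$\Rightarrow$(3) and the multiplier-invariance of closed ideals for (3)$\Rightarrow$(2), is exactly the standard route. Your observation that (2)$\Rightarrow$(1) literally only yields translation invariance of the weak$^*$ closure of $E$ is a genuine (but harmless, given how the lemma is applied) imprecision in the statement, and you handle it correctly.
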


Here then is the definition of Kaliszewski-Landstad-Quigg.

\begin{definition}\label{klq def}
Let $E$ be a translation invariant subspace of $B(G)$.  Let $I_E$ be the pre-annihilator of $E$, which is an ideal by Lemma \ref{ti id}.  Define a \cstar{}algebra
$$
C^*_{E,KLQ}(G)\defeq C^*_\un(G)/I_{E}.
$$
\end{definition}

This construction actually gives rise to all exotic group algebras (and indeed to all quotients of $C^*_\un(G)$ if we do not impose further restrictions on $E$).
To make this statement precise, recall that for us a group $C^*$-algebra is essentially the same thing as a $C^*$-algebra norm on $C_c(G)$ that dominates the reduced norm, and is dominated by the universal norm.  If $\|\cdot\|$ is such a norm, write $E_{\|\cdot\|}$ for the elements of $B(G)$ that are continuous for that norm; conversely, if $E$ is a translation invariant subspace of $B(G)$, write $\|\cdot\|_{E,KLQ}$ for the norm on $C^*_{E,KLQ}(G)$ restricted to $C_c(G)$.  Let $B_\red(G)$ denote the elements of $B(G)$ that are continuous for the reduced norm, so $B_\red(G)$ identifies canonically with $C^*_\red(G)'$.
It also coincides with the weak$^*$ closure of $B_c(G):=C_c(G)\cap B(G)$ in $B(G)$
by \cite[Lemma 3.9]{Kaliszewski-Landstad-Quigg:Exotic}.

\begin{proposition}\label{everything}
The assignments
$$
E\mapsto \|\cdot\|_{E,KLQ},\quad  \|\cdot\|\mapsto E_{\|\cdot\|}
$$
define mutually inverse bijections between the set of weak$^*$-closed, proper, translation invariant subspaces of $B(G)$ that strictly contain $B_\red(G)$, and the set of exotic group algebra norms on $C_c(G)$.
\end{proposition}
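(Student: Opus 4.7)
The plan is to reduce the statement to the standard weak*-duality between $C^*_\un(G)$ and its Banach space dual $B(G)$, combined with the bipolar theorem and the dictionary between quotients of $C^*_\un(G)$ and $C^*$-norms on $C_c(G)$.

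First I would identify exotic group algebra norms on $C_c(G)$ with proper closed two-sided ideals strictly contained in $J_\red \defeq \ker(C^*_\un(G)\onto C^*_\red(G))$. Indeed, any $C^*$-norm $\|\cdot\|$ on $C_c(G)$ bounded by $\|\cdot\|_\un$ extends continuously to a $C^*$-seminorm on $C^*_\un(G)$, whose kernel $I_{\|\cdot\|}$ is a closed two-sided ideal; and $\|\cdot\|\geq \|\cdot\|_\red$ on $C_c(G)$ if and only if the quotient map factors through $C^*_\red(G)$, i.e., $I_{\|\cdot\|}\subseteq J_\red$. Conversely, any such ideal $I$ produces a quotient $C^*_\un(G)/I$ whose restricted norm on $C_c(G)$ lies between the reduced and universal norms. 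The conditions $I\neq 0$ and $I\neq J_\red$ correspond exactly to $\|\cdot\|$ being neither the universal nor the reduced norm.

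Second, invoke the bipolar theorem: the maps $E\mapsto {}^\perp E$ and $I\mapsto I^\perp$ are mutually inverse bijections between weak*-closed subspaces of $B(G)$ and norm-closed subspaces of $C^*_\un(G)$. Lemma \ref{ti id} identifies translation-invariant subspaces of $B(G)$ with two-sided ideals of $C^*_\un(G)$ on the other side of this bijection. Moreover, the identification $B_\red(G)\cong C^*_\red(G)'$ stated before the proposition is precisely the statement $J_\red^\perp=B_\red(G)$; hence $I\subseteq J_\red$ if and only if $B_\red(G)\subseteq E$, and strictness of each containment corresponds to strictness of the other, yielding the correspondence between exotic norms and weak*-closed translation-invariant $E$ with $B_\red(G)\subsetneq E\subsetneq B(G)$.

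Finally I would check that the two assignments in the proposition realize this dictionary. Starting from $E$, the ideal $I_E$ is the pre-annihilator of $E$ by definition of $C^*_{E,KLQ}(G)$; the functionals on $C^*_\un(G)$ continuous for the quotient norm $\|\cdot\|_{E,KLQ}$ are those vanishing on $I_E$, i.e., $I_E^\perp$, which equals $E$ by weak*-closedness, so $E_{\|\cdot\|_{E,KLQ}}=E$. Conversely, starting from an exotic norm $\|\cdot\|$ with associated ideal $I_{\|\cdot\|}$, one has $E_{\|\cdot\|}=I_{\|\cdot\|}^\perp$; its pre-annihilator is ${}^\perp(I_{\|\cdot\|}^\perp)=I_{\|\cdot\|}$ by norm-closedness, so $\|\cdot\|_{E_{\|\cdot\|},KLQ}$ defines the same quotient of $C^*_\un(G)$ and hence agrees with $\|\cdot\|$ on $C_c(G)$. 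No step here is a serious obstacle; the only mildly delicate point is the identification $J_\red^\perp=B_\red(G)$, which is essentially built into the paper's setup, and the weak*-closedness of $B_\red(G)$ recorded just before the statement, which is what makes the strict-containment condition meaningful.
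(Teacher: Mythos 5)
Your proposal is correct and follows essentially the same route as the paper's proof: both reduce the statement to the annihilator/pre-annihilator duality between norm-closed subspaces of $C^*_\un(G)$ and weak$^*$-closed subspaces of $B(G)$, use Lemma \ref{ti id} to match translation invariance with the ideal property, and then track the dictionary between closed ideals contained in $\ker(C^*_\un(G)\to C^*_\red(G))$ and group algebra norms. The only difference is that you spell out the verification that the two stated assignments realize this correspondence, which the paper leaves as ``following these correspondences through.''
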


\begin{proof}
Recall first that if $X$ is any Banach space, then taking annihilators and pre-annihilators gives a bijective correspondence between closed subspaces of $X$ and weak$^*$-closed subspaces of the dual $X'$.  Specialising this to $X=C^*_\un(G)$ and using Lemma \ref{ti id} gives a bijective correspondence between translation invariant weak$^*$-closed subspaces of $X'=B(G)$ and closed ideals of $C^*_\un(G)$.  Specialising yet further gives a bijective correspondence between the weak$^*$-closed, proper, translation invariant subspaces of $B(G)$ that strictly contain $B_\red(G)$, and the closed nonzero ideals in $C^*_\un(G)$ that are strictly contained in the kernel of the canonical quotient map $C^*_\un(G)\to C^*_\red(G)$.  The latter are clearly in bijective correspondence with \cstar{}semi-norms on $C^*_\un(G)$ that restrict to exotic group algebra norms on $C_c(G)$ by associating an ideal to the corresponding quotient norm and vice versa.  Following these correspondences through gives the result.
\end{proof}

It is worth noticing that by  \cite{Kaliszewski-Landstad-Quigg:Exotic}*{Lemma~3.14} the condition $B_\red(G)\subseteq E$ is automatic, if $E$ is a weak$^*$ closed translation invariant
{\em ideal} in $B(G)$.

For a translation invariant space $D$ of functions on $G$ containing $B_c(G)=B(G)\cap \contc(G)$
it is asserted in \cite{Kaliszewski-Landstad-Quigg:Exotic}*{Lemma~3.5} that $C^*_{D,BG}(G)\cong C^*_{E,KLQ}(G)$ if we define $E\defeq D\cap B(G)$. However there seems to be a gap in the proof of \cite{Kaliszewski-Landstad-Quigg:Exotic}*{Lemma~3.5} and it is not clear to us whether the conclusion of that lemma holds.
So our next goal is to clarify the relationship between the Brown-Guentner and Kaliszewski-Landstad-Quigg constructions.  We denote by $P(G)$ the collection of continuous positive type functions on $G$, which is a cone in $B(G)$. The following proposition is closely related
to \cite[Propositions 4.1 and 4.3]{Wiersma}.

\begin{proposition}\label{same}
Say $D$ is a translation invariant subspace of the vector space of complex-valued functions on $G$.  Then $E\defeq \text{span}(D\cap P(G))$ is a translation invariant subspace of $B(G)$.
Moreover, the identity map on $C_c(G)$ extends to an isomorphism $C^*_{D,BG}(G)\cong C^*_{E,KLQ}(G)$.
\end{proposition}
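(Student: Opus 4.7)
First, $E \subseteq B(G)$ since $P(G) \subseteq B(G)$. For translation invariance of $E$, by symmetry it suffices to show that every left translate $g \mapsto \phi(t^{-1}g)$ of a $\phi \in D \cap P(G)$ lies in $E$. Taking a GNS triple $(\pi_\phi, \H_\phi, \xi_\phi)$ with $\phi(g) = \langle \xi_\phi, \pi_\phi(g)\xi_\phi\rangle$, we rewrite $\phi(t^{-1}g) = \langle \pi_\phi(t)\xi_\phi, \pi_\phi(g)\xi_\phi\rangle$; the polarization identity, applied with $\xi = \pi_\phi(t)\xi_\phi$ and $\eta = \xi_\phi$, yields
$$
4\phi(t^{-1}g) = \sum_{k=0}^{3} i^{-k}\psi_k(g), \qquad \psi_k(g) \defeq \langle \xi+i^k\eta, \pi_\phi(g)(\xi+i^k\eta)\rangle.
$$
Each $\psi_k$ is positive type, and a direct expansion gives $\psi_k(g) = \phi(t^{-1}gt) + i^k\phi(t^{-1}g) + i^{-k}\phi(gt) + \phi(g)$, a linear combination of translates of $\phi$, hence in $D$. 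Thus $\psi_k \in D \cap P(G) \subseteq E$, and $\phi(t^{-1}\cdot) \in E$.

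Next, I reduce the $C^*$-isomorphism to showing $\|f\|_{D,BG} = \|f\|_{E,KLQ}$ on $C_c(G)$, where $\|\cdot\|_{E,KLQ}$ is the quotient norm on $C^*_\un(G)/I_E$, equivalently the supremum of $\|u(f)\|$ over representations $u$ of $G$ whose matrix coefficients lie in $(I_E)^\perp$, the weak-$*$ closure of $E$ in $B(G)$. For $\|\cdot\|_{D,BG} \leq \|\cdot\|_{E,KLQ}$, every $D$-representation $u$ has matrix coefficients in this weak-$*$ closure: for $\xi, \eta$ in the witnessing dense subspace $\H_0$, the same polarization writes $\psi_k$ as a linear combination of matrix coefficients between $\H_0$-vectors (which lie in $D$ by hypothesis), so $\psi_k \in D \cap P(G) \subseteq E$ and $\langle \xi, u_g\eta\rangle \in E$. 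For arbitrary $\xi, \eta \in \H$, approximate by sequences in $\H_0$ and use $\|\phi_{\xi',\eta'}\|_{B(G)} \leq \|\xi'\|\|\eta'\|$ to exhibit $\langle \xi, u_g\eta\rangle$ as a $B(G)$-norm limit of elements of $E$, hence in the weak-$*$ closure.

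For the reverse inequality, I construct the direct-sum representation $u_D \defeq \bigoplus_{\phi \in D \cap P(G)} \pi_\phi$ on $\H \defeq \bigoplus_\phi \H_\phi$, with dense subspace $\H_0$ the algebraic direct sum of the cyclic orbits $\spn(\pi_\phi(G)\xi_\phi)$. Matrix coefficients of $u_D$ between $\H_0$-vectors are finite linear combinations of functions $g \mapsto \phi(s^{-1}gt)$ with $\phi \in D \cap P(G)$, all in $D$ by translation invariance; so $u_D$ is a $D$-representation and $\|\tilde u_D(f)\| \leq \|f\|_{D,BG}$. The previous paragraph applied to $u_D$ gives $\tilde u_D(I_E) = 0$; conversely, since each $\phi \in D \cap P(G)$ is itself a matrix coefficient of $\pi_\phi \subseteq u_D$, any $a \in \ker \tilde u_D$ annihilates $D \cap P(G)$ and hence $E$, so $a \in I_E$. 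Thus $\tilde u_D$ descends to an isometric embedding $C^*_\un(G)/I_E \into \B(\H)$, yielding $\|f\|_{E,KLQ} = \|\tilde u_D(f)\| \leq \|f\|_{D,BG}$. The main subtlety throughout is the three-layer chain from $E$ to its $B(G)$-norm closure to its weak-$*$ closure, only the last equalling $(I_E)^\perp$; polarization keeps us inside $E$ on $\H_0$, and $B(G)$-norm continuity of matrix coefficients bridges to all of $\H$.
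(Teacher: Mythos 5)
Your proof is correct and follows essentially the same route as the paper: translation invariance via GNS plus polarization, and the isomorphism by matching $I_E$ with the common kernel of the $D$-representations, using that diagonal matrix coefficients on $\H_0$ land in $D\cap P(G)\subseteq E$ in one direction and that GNS representations of elements of $D\cap P(G)$ are $D$-representations in the other. The only cosmetic differences are that you package the second direction into a single direct-sum $D$-representation rather than one GNS representation per separating functional, and you pass through the weak$^*$ closure of $E$ for off-diagonal coefficients where the paper just uses diagonal coefficients and density.
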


\begin{proof}
As $P(G)$ is contained in $B(G)$, it is clear that $E$ is a subspace of $B(G)$; we must show that it is translation invariant.  We will focus on the left action; the case of the right action is similar.

For a function $\phi:G\to\C$ and $s\in G$, write $_s\phi$ for the left-translate defined by $_s\phi:t\mapsto \phi(s^{-1}t)$, and $\phi_s:t\mapsto \phi(ts)$ for the right translate.
It suffices to show that if $\phi\in P(G)\cap D$ and $s\in G$, then $_s\phi$ is in $E$.  As $\phi$ is in $P(G)$, we may write
$$
\phi(t)=\langle \xi,u_t\xi\rangle
$$
for some representation $u:G\to \U(\H)$ and $\xi\in \H$.  It follows that
$$
_s\phi(t)=\langle \xi,u_{s^{-1}t}\xi\rangle=\langle u_s\xi,u_t\xi\rangle=\frac{1}{4}\sum_{k=0}^3i^k \langle u_s\xi+i^k\xi,u_t(u_s\xi+i^k\xi)\rangle.
$$
Write $\phi_k(t)=\langle u_s\xi+i^k\xi,u_t(u_s\xi+i^k\xi)\rangle$; as each $\phi_k$ is in $P(G)$, it suffices to show that each is in $D$.  Note, however, that
\begin{align*}
\phi_k(t) & =\langle u_s\xi,u_tu_s\xi\rangle+i^k\langle u_s\xi,u_t\xi\rangle+i^{-k}\langle \xi,u_tu_s\xi\rangle+\langle \xi,u_t\xi\rangle \\
&= {_s\phi_s}(t)+i^k{}_s\phi(t)+i^{-k}\phi_s(t)+\phi(t),
\end{align*}
and so $\phi_k$ is in $D$ because $D$ is translation invariant.

To see that the identity map on $C_c(G)$ induces an isomorphism $C^*_{D,BG}(G)\cong C^*_{E,KLQ}(G)$, it suffices to check that
$$
I_{E}=\bigcap\{\text{kernel}(u:C^*_\un(G)\to \Lb(\H))~|~u:G\to \U(\H) \text{ a $D$-representation}\}
$$
Say first $a\in I_{E}$, and let $u:G\to\U(\H)$ be a $D$-representation with associated dense subspace $\H_0$ giving rise to a dense set of matrix coefficients in $D$.   Then for any $\xi\in \H_0$, if $\phi(s)=\langle \xi,u_s\xi\rangle$ then $\phi$ is in $D\cap P(G)\subseteq E$ and we have that
$$
\langle \xi,u(a)\xi\rangle=\langle a, \phi\rangle=0.
$$
As $\H_0$ is dense, this forces $\langle \eta,u(a)\eta\rangle=0$ for all $\eta\in \H$, and thus $a$ to be in the kernel of (the integrated form of) $u$.  Hence
$$
I_{E}\subseteq \bigcap\{\text{kernel}(u:C^*_\un(G)\to \Lb(\H))~|~u:G\to \U(\H) \text{ a $D$-representation}\}.
$$

For the converse inclusion, say $a\in C^*_\un(G)\setminus I_{E}$.  Then there is an element $\phi$ of $E$ such that $\langle a,\phi\rangle\neq 0$.  As $E$ is spanned by $D\cap P(G)$, we may assume moreover that $\phi$ is in $D\cap P(G)$.  Let $(u,\H,\xi)$ be the GNS triple associated to $\phi$, and note that $u:G\to\U(\H)$ is a $D$-representation: indeed, since $D$ is translation invariant we may take $\H_0=\text{span}\{u_s\xi~|~s\in G\}$.  Hence $\langle \xi,u(a)\xi\rangle=\langle a,\phi\rangle$, which is non-zero, and thus $a\not\in \text{kernel}(u:C^*_\un(G)\to \Lb(\H))$.
\end{proof}

\begin{corollary}\label{same same}
Let $E\subseteq B(G)$ be a translation invariant subspace of $B(G)$.  Then the following are equivalent:
\begin{enumerate}
\item  $E_0=\spn(E\cap P(G))$ is weak* dense in $E$.
\item The identity map on
$C_c(G)$ extends to an isomorphism $C^*_{E,BG}(G)\cong C^*_{E,KLQ}(G)$.
\end{enumerate}
\end{corollary}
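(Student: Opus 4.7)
The strategy is to reduce the corollary to Proposition \ref{same} combined with the Banach-space duality already exploited in Proposition \ref{everything}. First I would apply Proposition \ref{same} with $D=E$: since $E\subseteq B(G)$, one has $D\cap P(G)=E\cap P(G)$, so the proposition yields a canonical isomorphism $C^*_{E,BG}(G)\cong C^*_{E_0,KLQ}(G)$ extending the identity on $C_c(G)$. In particular $\|\cdot\|_{E,BG}=\|\cdot\|_{E_0,KLQ}$ as norms on $C_c(G)$, so condition (2) becomes the statement that the identity on $C_c(G)$ extends to an isomorphism $C^*_{E_0,KLQ}(G)\cong C^*_{E,KLQ}(G)$.

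Next I would compare the two KLQ-algebras directly. Since $E_0\subseteq E$, the pre-annihilators satisfy $I_E\subseteq I_{E_0}$, giving a canonical surjection
$$
C^*_{E,KLQ}(G)=C^*_\un(G)/I_E \onto C^*_\un(G)/I_{E_0}=C^*_{E_0,KLQ}(G)
$$
that clearly restricts to the identity on $C_c(G)$ (and is the unique such morphism by density of $C_c(G)$). This map is an isomorphism precisely when $I_E=I_{E_0}$, so the previous paragraph shows that (2) is equivalent to $I_E=I_{E_0}$.

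Finally I would invoke the standard duality used in the proof of Proposition \ref{everything}: taking pre-annihilators sets up a bijection between weak*-closed subspaces of $B(G)=C^*_\un(G)'$ and closed subspaces of $C^*_\un(G)$, and in particular the pre-annihilator of any subspace of $B(G)$ equals the pre-annihilator of its weak*-closure. Applied to $E_0\subseteq E$ this gives
$$
I_E=I_{E_0} \iff \overline{E}^{\,w*}=\overline{E_0}^{\,w*},
$$
and, because $E_0\subseteq E$, the right-hand equality is precisely the assertion that $E_0$ is weak*-dense in $E$, i.e.\ condition (1). Chaining the equivalences yields (1)$\iff$(2).

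I do not expect a real obstacle: once Proposition \ref{same} is in hand, the corollary is essentially bookkeeping on top of the pre-annihilator/annihilator correspondence. The only point requiring a little care is to track at each stage that the relevant comparison maps are the identity on $C_c(G)$, so that an equality of norms on $C_c(G)$ corresponds faithfully to an equality of KLQ-quotients and hence to an equality of pre-annihilators in $B(G)$.
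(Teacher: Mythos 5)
Your proposal is correct and follows essentially the same route as the paper: reduce to comparing $C^*_{E_0,KLQ}(G)$ with $C^*_{E,KLQ}(G)$ via Proposition \ref{same}, and then observe that two KLQ algebras coincide exactly when the weak$^*$ closures of the defining subspaces agree. You merely spell out the pre-annihilator/weak$^*$-closure duality explicitly where the paper cites Proposition \ref{everything}.
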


\begin{proof}
It follows from the definition of KLQ-group algebras  together with  Proposition \ref{everything} that
 $C_{E_0, KLQ}^*(G)=C_{E,KLQ}^*(G)$ if and only if the weak$^*$ closures of $E_0$ and $E$ coincide.
 Combining this with  Proposition \ref{same} gives the equivalence of (1) and (2).
\end{proof}

At the time of writing we do not know whether $\spn(E\cap P(G))$ is weak$^*$ dense in
$E$ for all translation invariant subspaces of $B(G)$, but it seems likely that there are counter examples.
The following lemma gives some results:

\begin{lemma}\label{lem-closed}
Suppose $E\subseteq B(G)$ is a translation invariant subspace which satisfies one of the following
conditions:
\begin{enumerate}
\item $E$ is closed with respect to the supremum-norm on $B(G)\subseteq C_b(G)$.
\item $E$ is closed in the norm on $B(G)$ coming from identifying $B(G)$ with the dual space $C_\un^*(G)'$.
\item $E$ is weak* closed.
\end{enumerate}
Then the identity map on $C_c(G)$ extends to an isomorphism $C_{E,BG}^*(G)=C_{E,KLQ}^*(G)$.
\end{lemma}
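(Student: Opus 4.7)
By Corollary~\ref{same same}, in each case it suffices to show that $E_0 := \spn(E \cap P(G))$ is weak*-dense in $E$; since $E_0 \sbe E$ gives $I_E \sbe I_{E_0}$ trivially, the content lies in the reverse inclusion.

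For case~(3), Lemma~\ref{ti id} gives that $I_E$ is a closed two-sided ideal of $C^*_\un(G)$ and $E = (I_E)^\perp$. Any $\phi \in E$ descends to a functional on $A := C^*_\un(G)/I_E$, which by the standard representation theory of $C^*$-algebras takes the form $\phi(a) = \langle \xi, \pi(a)\eta\rangle$ for some $*$-representation $\pi: A \to \Lb(\H)$ and $\xi, \eta \in \H$. The polarization identity
\[
\phi = \tfrac{1}{4}\sum_{k=0}^3 i^{-k}\,\omega_{\xi + i^k \eta}, \qquad \omega_v(a) := \langle v, \pi(a) v\rangle,
\]
writes $\phi$ as a linear combination of positive functionals on $A$, each of which pulls back to an element of $(I_E)^\perp \cap P(G) = E \cap P(G)$. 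Hence $\phi \in E_0$; in fact $E = E_0$.

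Since $\|\cdot\|_\infty \leq \|\cdot\|_{B(G)}$ on $B(G)$, any sup-norm closed subspace is automatically $B(G)$-norm closed, so case~(1) reduces to case~(2). For case~(2), the key observation is that $E$ is automatically a $C^*_\un(G)$-bimodule under the left and right actions $(a\cdot\phi)(c) := \phi(ca)$ and $(\phi\cdot b)(c) := \phi(bc)$: translation invariance covers unitaries $u_t$, the estimate $\|a\cdot\phi - a'\cdot\phi\|_{B(G)} \leq \|a-a'\|_{C^*_\un}\|\phi\|_{B(G)}$ (and its right analog) gives boundedness, and the density of $C_c(G)$ in $C^*_\un(G)$ combined with $B(G)$-norm closure of $E$ extends the bimodule property to all of $C^*_\un(G)$.

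Given $\phi = \omega_{\xi, \eta} \in E$ realized in a representation $(\pi, \H)$, an iterated orthogonal-projection argument (project $\xi$ onto $\H_\eta$, then $\eta$ onto the cyclic subspace of the new $\tilde\xi$, and so on; each such projection commutes with $\pi$ thanks to $\pi$-invariance of each cyclic subspace, and preserves $\phi$) reduces to the case where $\xi$ and $\eta$ lie in a common $\pi$-invariant subspace $\H_0$ on which both are cyclic vectors. For any $v \in \H_0$, pick $a_n, b_n \in C^*_\un(G)$ with $\pi(a_n)\eta \to v$ and $\pi(b_n^*)\xi \to v$ in $\H$-norm; then the bimodule identity
\[
a_n \cdot \phi \cdot b_n = \omega_{\pi(b_n^*)\xi,\, \pi(a_n)\eta} \xrightarrow{\|\cdot\|_{B(G)}} \omega_{v,v} = \omega_v,
\]
together with $B(G)$-norm closure of $E$, places $\omega_v \in E \cap P(G) \sbe E_0$ for every $v \in \H_0$. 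Applying this to $v = \xi + i^k\eta$ for $k = 0,1,2,3$ and invoking polarization yields $\phi = \tfrac14\sum_k i^{-k}\omega_{\xi + i^k \eta} \in E_0$, so $E = E_0$ in this case as well. The principal technical difficulty is the iterated-projection reduction to a common cyclic subspace, which requires a limiting argument for a nested decreasing chain of $\pi$-invariant subspaces.
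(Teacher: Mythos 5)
Your argument is correct and, for the main case, follows essentially the same route as the paper: reduce (via Proposition~\ref{same} / Corollary~\ref{same same}) to showing $E=\spn(E\cap P(G))$, realise $\phi$ as a matrix coefficient with two mutually cyclic vectors, approximate each vector by translates of the other, and finish with closedness of $E$ and polarisation. The differences are worth recording. First, you dispatch the weak*-closed case by identifying $E=(I_E)^\perp$ with the dual of $C^*_\un(G)/I_E$ and quoting that every bounded functional on a \cstar{}algebra is a combination of (four) positive functionals; the paper instead runs one approximation argument uniformly in all three cases, so your case (3) is actually cleaner. Second, you route the approximation through the full $C^*_\un(G)$-bimodule structure on $B(G)$. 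Here your justification has a small hole: density of $C_c(G)$ in $C^*_\un(G)$ does not by itself get you from the unitaries $u_G(t)$ to elements of $C_c(G)$ — for that you need the Bochner integral $f\cdot\phi=\int f(t)\,\bigl(u_G(t)\cdot\phi\bigr)\dd t$, which lands in $E$ because the integrand is a norm-continuous compactly supported $E$-valued function and $E$ is norm closed. The detour is in any case avoidable: the argument only needs invariance of $E$ under the $*$-algebra generated by the $u_G(t)$, i.e.\ under finite linear combinations of translates, which is immediate from translation invariance (this is what the paper uses). Third, the ``principal technical difficulty'' you flag does go through, and no limit over a nested chain is needed — two projections suffice. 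Replace $\xi$ by $\xi'=P_1\xi$ with $P_1$ the projection onto $K_1=\cspn(u(G)\eta)$ (this preserves $\phi$ since $u_s\eta\in K_1$), then replace $\eta$ by $\eta'=P_2\eta$ with $P_2$ the projection onto $K_2=\cspn(u(G)\xi')$ (this preserves $\phi$ since $u_{s^{-1}}\xi'\in K_2$). Then $\eta'\in K_2=\cspn(u(G)\xi')$ by construction, and since $\xi'=P_2\xi'$ with $\xi'\in K_1$ and $P_2$ commutes with $u$, one gets $\xi'\in P_2(K_1)\subseteq\cspn(u(G)P_2\eta)=\cspn(u(G)\eta')$; hence the two cyclic subspaces coincide and both vectors are cyclic there. (Incidentally, the paper's own one-line reduction — projecting both vectors onto $\cspn(u(G)\xi)\cap\cspn(u(G)\eta)$ — does not literally preserve $\phi$, as a two-dimensional trivial representation already shows; the iterated version you propose, carried out as above, is the correct fix.)
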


\begin{proof}
We show that in all three cases every element in $E$ can be written as a linear combination
of positive elements in $E$. Having done this, all three cases then follow from Proposition \ref{same}.
For this let $s\mapsto \phi(s)=\langle \xi, u_s\eta\rangle$ be a nonzero element of $E$ for
some unitary representation $u:G\to\U(\H)$.
By passing to $\H_0=\cspn(u(G)\xi)\cap \cspn(u(G)\eta)$ and the images of $\xi,\eta$ under the
orthogonal projections to $\H_0$, if necessary,  we may assume without loss of generality that
both vectors $\xi,\eta$ are cyclic vectors for $u$.
Approximating $\xi$ by elements in $\spn(u(G)\eta)$  and observing that $(s\mapsto \langle\xi, u_s\eta'\rangle)\in E$
for any $\eta'\in\spn(u(G)\eta)$ by translation invariance of $E$, it follows from any of the conditions
(1), (2), (3) that $s\mapsto \langle \xi, u_s\xi\rangle)\in E$, and a similar argument gives $(s\mapsto\langle\eta,u_s\eta\rangle)\in E$.
But then every summand in the polarisation identity
$$\lk \xi, u_s\eta\rk=\frac{1}{4} \sum_{k=0}^3i^k \lk \xi + i^k\eta, u_s(\xi+ i^k\eta)\rk$$
lies in $E$. This finishes the proof.
\end{proof}

\begin{example}\label{ex-C0} The above lemma applies to $E_0:=C_0(G)\cap B(G)$, which is closed in $B(G)$ under $\|\cdot\|_\infty$.
We do not know whether the conclusion of the above lemma applies to $D_p:=L^p(G)\cap B(G)$, so we do not know
whether $C_p^*(G)=C_{D_p,BG}^*(G)$ equals $C_{D_p,KLQ}^*(G)$. However, if we replace $D_p$ by
$E_p:=\spn(L^p(G)\cap P(G))$ we get $C_p^*(G)=C_{E_p,KLQ}^*(G)$.
\end{example}

\begin{convention}\label{ex con}
In what follows we shall often use the notation ``$C^*_E(G)$'' for
the {\em KLQ group algebra} attached to $E$. Recall that we have
$C_E^*(G)=C_{\overline{E}}^*(G)$ if $\overline{E}$ denotes the weak$^*$ closure of $E$ in $B(G)$.
We shall be careful to write
$C_{E, BG}^*(G)$ whenever we want to talk about the BG-group algebra attached to $E$
in cases where it does not obviously coincide
with $C_E^*(G)$ by any of the above results.
\end{convention}

\begin{remark}\label{co rem}
It will be relevant to us that algebra properties of $E$ are reflected in \emph{co}algebra properties of $C^*_E(G)$.  The basic result in this direction is due to Kaliszewski, Landstad and Quigg \cite[Corollary 3.13 and Proposition 3.16]{Kaliszewski-Landstad-Quigg:Exotic}.  To describe it, let $u_G:G\to \U(\M(C^*_\un(G)))$ be the universal representation of $G$ as in line \eqref{uni rep} above.
Define the \emph{comultiplication} homomorphism
$$
\delta:C^*_\un(G)\to \M(C^*_\un(G)\otimes C^*_\un(G))
$$
to be the integrated form of the diagonal representation $s\mapsto u_G(s)\otimes u_G(s)$, which exists by Remark \ref{uni prop}.  Consider the following diagram, where the horizontal arrows are induced by the canonical quotients:
$$
\xymatrix{ C^*_\un(G) \ar[d]^\delta \ar[r] & C^*_E(G) \ar@{=}[r] \ar@{-->}[d] & C^*_E(G) \ar@{-->}[d] \\
\M(C^*_\un(G)\otimes C^*_\un(G)) \ar[r] & \M(C^*_E(G)\otimes C^*_\un(G)) \ar[r] & \M(C^*_E(G)\otimes C^*_E(G)).}
$$
Then $E$ is a subalgebra of $B(G)$ if and only the rightmost dashed arrow can be filled in; and $E$ is an ideal in $B(G)$ if and only if the central dashed arrow can be filled in.
\end{remark}

We close this section with three theorems which
show that exotic group \cstar{}algebras allow new characterisations of classical notions from non-abelian harmonic analysis.  For discrete groups, these results can be found in \cite[Sections 2 and 3]{Brown-Guentner:New_completions}, although (1) if and only if (2) from Theorem \ref{amen} is much older, and due to Hulanicki \cite{Hulanicki}.  The general cases can be proved by slight elaborations of the arguments given there: see also \cite{Jolissaint} for the result on the Haagerup property in the general case.

\begin{theorem}\label{amen}
The following are equivalent:
\begin{enumerate}
\item $G$ is amenable;
\item $C_c(G)\cap B(G)$ is weak$^*$ dense in $B(G)$, i.e.\ $C^*_\un(G)=C^*_\red(G)$;
\item $E_p=\spn(L^p(G)\cap P(G))$ is weak$^*$ dense in $B(G)$ for some $p<\infty$, i.e.\ $C^*_\un(G)=C^*_p(G)$.
\end{enumerate}
\end{theorem}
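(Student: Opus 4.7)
The plan is to establish $(1)\Rightarrow(2)\Rightarrow(3)\Rightarrow(1)$, using Proposition~\ref{everything} and Example~\ref{ex-C0} as a translation device between weak$^*$-density in $B(G)$ and equality of completions of $C_c(G)$. Under this dictionary, (2) becomes the assertion $C^*_\un(G)=C^*_\red(G)$ (since $B_\red(G)$ is by definition the weak$^*$-closure of $B_c(G)=C_c(G)\cap B(G)$), while (3) becomes the assertion $C^*_\un(G)=C^*_p(G)$ for some $p<\infty$. Consequently (1)$\Leftrightarrow$(2) is precisely Hulanicki's classical theorem, and (2)$\Rightarrow$(3) is immediate from the sandwich $\|\cdot\|_\red\leq\|\cdot\|_p\leq\|\cdot\|_\un$, which forces $C^*_p(G)=C^*_\un(G)$ for every $p$ as soon as the outer norms agree.

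The substance is (3)$\Rightarrow$(1). Via the dictionary, $C^*_p(G)=C^*_\un(G)$ in particular means that the trivial representation $1_G$ factors through $C^*_p(G)$, i.e.\ $1_G$ is weakly contained in the class of $L^p$-representations of $G$. Since a direct sum of $L^p$-representations is again an $L^p$-representation (matrix coefficients decompose into finite sums of summand coefficients), unwinding weak containment produces, for every compact $K\subseteq G$ and every $\epsilon>0$, a single $L^p$-representation $(\pi,\H)$ and a unit vector $\xi$ in the dense $L^p$-matrix-coefficient subspace of $\H$, such that $\phi(g)\defeq\langle\xi,\pi_g\xi\rangle$ is a normalised element of $L^p(G)\cap P(G)$ with $|\phi(g)-1|<\epsilon$ for all $g\in K$.

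The final step is a ``power trick'' to move these approximants into $L^2(G)\cap P(G)$, where classical criteria apply. The pointwise product of elements of $P(G)$ is in $P(G)$ (via a tensor product of representations), so $|\phi|^{2^k}=(\phi\bar\phi)^{2^{k-1}}$ is positive type, normalised at $e$, and lies in $L^{p/2^k}(G)$. For $k$ large enough that $p/2^k\leq 2$, the boundedness of positive type functions by their value at $e$ gives $|\phi|^{2^k}\in L^2(G)\cap P(G)$; and $\phi\to 1$ uniformly on $K$ immediately yields $|\phi|^{2^k}\to 1$ uniformly on $K$. This produces a net of normalised $L^2$-positive-type functions converging to $1$ uniformly on compacta, which is Godement's classical criterion (equivalently, $1_G\prec\lambda$) for amenability of $G$.

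The delicate point is extracting a single $L^p$-representation with a positive type matrix coefficient approximating $1$ from the a priori linear-combination structure of $E_p=\spn(L^p\cap P)$; this is handled by reinterpreting weak$^*$-density as the $C^*$-algebraic equality $C^*_p(G)=C^*_\un(G)$ and then using the direct-sum and GNS constructions to produce such a vector. The remaining power-trick step is classical, and appears in essentially this form in \cite{Brown-Guentner:New_completions} for discrete $G$.
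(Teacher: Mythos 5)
Your argument is correct and is exactly the route the paper intends: the paper gives no proof of Theorem \ref{amen} beyond citing Hulanicki for (1)$\Leftrightarrow$(2) and \cite[Sections 2--3]{Brown-Guentner:New_completions} for the rest, noting that the general locally compact case follows by ``slight elaborations of the arguments given there.'' Your dictionary via Proposition \ref{everything} and Example \ref{ex-C0}, the reduction of (3)$\Rightarrow$(1) to weak containment of $1_G$ in a single $L^p$-representation, and the power trick landing in $L^2(G)\cap P(G)$ (Godement) are precisely that elaboration.
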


\begin{theorem}\label{atmen}
The following are equivalent:
\begin{enumerate}
\item $G$ has the Haagerup approximation property;
\item $E_0:=B(G)\cap C_0(G)$ is weak$^*$ dense in $B(G)$, i.e.\ $C^*_\un(G)=C^*_{E_0}(G)$.
\end{enumerate}
\end{theorem}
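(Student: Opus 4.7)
My plan is to first separate off the equivalence of the two formulations in (2), which is essentially Banach-space duality combined with Lemma~\ref{lem-closed}, and then prove (1) $\Leftrightarrow$ (2) directly.

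For the setup, note that $E_0=B(G)\cap C_0(G)$ is a translation-invariant ideal in $B(G)$ (since $C_0(G)$ is an ideal in $C_b(G)\supseteq B(G)$) and is closed in the supremum norm on $B(G)$. By Lemma~\ref{lem-closed} we therefore have $C^*_{E_0,BG}(G)=C^*_{E_0}(G)$, so the KLQ-completion coincides with the BG-completion for the norm $\|f\|_{E_0,BG}=\sup\{\|\sigma(f)\|:\sigma\text{ a $C_0$-representation of }G\}$; here, because $E_0$ is sup-closed, a representation whose matrix coefficients on a dense subspace lie in $E_0$ has \emph{all} its matrix coefficients in $E_0\subseteq C_0(G)$, and we call such a representation a $C_0$-representation. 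The Banach-space duality underlying Proposition~\ref{everything} identifies weak$^*$-closed subspaces of $B(G)$ with closed ideals of $C^*_\un(G)$, so $E_0$ is weak$^*$ dense iff $I_{E_0}=0$ iff the canonical map $C^*_\un(G)\to C^*_{E_0}(G)$ is an isomorphism. This handles the internal equivalence of (2).

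For (1) $\Rightarrow$ (2), assume HAP and pick a net $(\phi_i)\subseteq P(G)\cap C_0(G)$ with $\phi_i(e)=1$ and $\phi_i\to 1$ uniformly on compacta. For any $\psi\in B(G)$, the product $\phi_i\psi$ lies in $B(G)\cap C_0(G)=E_0$ (as $B(G)$ is an algebra, $\phi_i\in C_0(G)$ and $\psi\in C_b(G)$), and is $B(G)$-bounded: $\|\phi_i\psi\|_{B(G)}\leq \|\phi_i\|_{B(G)}\|\psi\|_{B(G)}=\phi_i(e)\|\psi\|_{B(G)}=\|\psi\|_{B(G)}$. For $f\in C_c(G)$, uniform convergence of $\phi_i\to 1$ on $\supp f$ plus dominated convergence gives $\langle f,\phi_i\psi\rangle\to\langle f,\psi\rangle$. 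Combining this with the $B(G)$-norm bound and density of $C_c(G)$ in $C^*_\un(G)$ upgrades to weak$^*$ convergence $\phi_i\psi\to\psi$ in $B(G)$, so $\psi$ lies in the weak$^*$ closure of $E_0$, giving weak$^*$ density.

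For (2) $\Rightarrow$ (1), define $\pi:=\bigoplus_{\phi\in E_0\cap P(G)}\sigma_\phi$, the direct sum of GNS-representations indexed by the \emph{set} $E_0\cap P(G)\subseteq B(G)$. I claim first that $\pi$ is itself a $C_0$-representation: for $\xi=(\xi_\phi)$, $\eta=(\eta_\phi)$ in $H_\pi$, the matrix coefficient $s\mapsto \sum_\phi\langle \xi_\phi,\sigma_\phi(s)\eta_\phi\rangle$ is a series of continuous $C_0$-functions dominated uniformly in $s$ by $\sum\|\xi_\phi\|\|\eta_\phi\|\leq \|\xi\|\|\eta\|$ (Cauchy--Schwarz), hence converges uniformly to a $C_0$-function. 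I claim next that $\|\pi(f)\|=\|f\|_{E_0,BG}$: every $C_0$-representation decomposes via cyclic decomposition into GNS-summands $\sigma_\phi$ with $\phi\in E_0\cap P(G)$, so each $C_0$-representation is a direct sum of summands of $\pi$ and has norm at most $\|\pi(f)\|$; the reverse inequality is immediate since $\pi$ is itself $C_0$. Under the hypothesis $C^*_\un(G)=C^*_{E_0}(G)$ we then have $\|\pi(f)\|=\|f\|_\un$, so the trivial representation $1_G$ is weakly contained in $\pi$. By Fell's theorem, for every compact $K\subseteq G$ and $\epsilon>0$ there exist $\eta_1,\dots,\eta_n\in H_\pi$ with $\sum_j\|\eta_j\|^2=1$ and $\bigl|\sum_j\langle \eta_j,\pi(s)\eta_j\rangle-1\bigr|<\epsilon$ on $K$. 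Setting $\xi=(\eta_j)\in H_\pi^n$, the function $s\mapsto \langle \xi,\pi^n(s)\xi\rangle=\sum_j\langle\eta_j,\pi(s)\eta_j\rangle$ is a normalised matrix coefficient of the amplification $\pi^n$ (still a $C_0$-representation), hence lies in $P(G)\cap C_0(G)$. Letting $(K,\epsilon)$ vary yields a net of normalised $C_0$ positive-definite functions converging to $1$ uniformly on compacta, which is HAP.

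The main obstacle is in (2) $\Rightarrow$ (1): realising a ``universal $C_0$-representation'' as a genuine set-indexed direct sum and keeping the $C_0$-property under direct sums. Indexing by $E_0\cap P(G)$ (a subset of the Banach space $B(G)$) handles the first issue, and the Cauchy--Schwarz-driven uniform absolute convergence handles the second; after that, Fell's characterisation of weak containment of the trivial representation is the standard route from a $C_0$-representation weakly containing $1_G$ to HAP.
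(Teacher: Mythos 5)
Your proof is correct and follows essentially the same route as the paper, which simply defers to Brown--Guentner's Corollary 3.4 together with Example \ref{ex-C0} (i.e.\ Lemma \ref{lem-closed}) to identify the BG and KLQ completions attached to $E_0$. Your two implications --- multiplying arbitrary elements of $B(G)$ by the Haagerup net to obtain weak$^*$ density, and for the converse using weak containment of $1_G$ in a universal $C_0$-representation together with Fell's condition --- are exactly the Brown--Guentner argument, written out in detail for general locally compact $G$.
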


The proof can be done as the proof of Corollary 3.4 in \cite{Brown-Guentner:New_completions}, where a similar result is
shown for discrete $G$ and $C_{C_0(G), BG}^*(G)$. By Example \ref{ex-C0} we know that this algebra coincides with
$C^*_{E_0}(G)$. The third theorem is about property (T):

\begin{theorem}\label{propt}
The following are equivalent:
\begin{enumerate}
\item $G$ has property (T);
\item If $E$ is a translation invariant ideal of $B(G)$ such that $E_0:=\spn(E\cap P(G))$ is weak$^*$ dense in $B(G)$,
then $E=B(G)$;
\item If $E$ is a translation invariant ideal of $B(G)$ such that
$C^*_\un(G)=C^*_{E,BG}(G)$, then $E=B(G)$.
\end{enumerate}
\end{theorem}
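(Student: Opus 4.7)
The equivalence $(2)\iff(3)$ is immediate from Proposition~\ref{same} and Proposition~\ref{everything}: for any translation-invariant subspace $E\subseteq B(G)$, we have $C^*_{E,BG}(G)\cong C^*_{E_0,KLQ}(G)=C^*_\un(G)/I_{E_0}$, so $C^*_{E,BG}(G)=C^*_\un(G)$ holds precisely when $I_{E_0}=0$, which amounts to $E_0:=\spn(E\cap P(G))$ being weak$^*$ dense in $B(G)$. Thus (2) and (3) assert the same statement, and it suffices to establish the equivalence with (1) in either formulation.

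For $(1)\Rightarrow(3)$, assume property (T) and let $E$ be a translation-invariant ideal of $B(G)$ with $C^*_{E,BG}(G)=C^*_\un(G)$. The plan is to use the Kazhdan projection $p_T\in C^*_\un(G)$, the central projection satisfying $u(p_T)=P_{\H^G}$ for every representation $u\colon G\to\U(\H)$, to extract a nonzero constant function from $E$. Since $\|p_T\|_{E,BG}=\|p_T\|_\un=1$, some $E$-representation $u$ must satisfy $u(p_T)\ne 0$, hence admits a nonzero invariant vector $\xi\in\H^G$. Exploit the identity $\langle\xi,u_g\eta\rangle=\langle\xi,\eta\rangle$: pick $\eta$ in the dense subspace $\H_0$ witnessing the $E$-rep property with $\langle\xi,\eta\rangle\ne 0$, and approximate $\xi$ by vectors $\eta_n\in\H_0$; the bilateral matrix coefficients $\phi_n(g):=\langle\eta_n,u_g\eta\rangle$ lie in $E$ by the polarization argument of Proposition~\ref{same}, and converge uniformly in $g$ to the constant $\langle\xi,\eta\rangle\cdot 1$. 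The ideal structure of $E$ together with the $C^*$-decomposition $C^*_\un(G)=\C p_T\oplus J_T$ provided by property (T) then upgrades this uniform approximation to an actual membership $\langle\xi,\eta\rangle\cdot 1\in E$; since $E$ is a subspace, $1\in E$, and since $1$ is the multiplicative identity of $B(G)$, the ideal $E$ must be all of $B(G)$.

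The main obstacle is this last upgrade, from uniform approximation of a nonzero constant to actual membership in $E$. Both the ideal hypothesis on $E$ and property (T) are essential here: without them, the uniform limit of elements of $E$ need not lie in $E$, as witnessed, e.g., by $E=C_0(G)\cap B(G)$ when $G$ has the Haagerup property but not (T).

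For $(3)\Rightarrow(1)$, argue contrapositively. If $G$ lacks property (T), the trivial representation is not isolated in $\widehat G$, and one constructs a proper translation-invariant ideal $E\subsetneq B(G)$ with $C^*_{E,BG}(G)=C^*_\un(G)$. For amenable non-compact $G$ the Fourier algebra $E=A(G)$ suffices; for $G$ with the Haagerup property one takes $E=C_0(G)\cap B(G)$, which is weak$^*$ dense by Theorem~\ref{atmen} and proper by non-compactness; and in the general case matrix coefficients of a net of representations Fell-converging to $1_G$ but never containing it generate the required proper ideal.
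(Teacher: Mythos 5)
Your reduction of (2)$\iff$(3) to Propositions \ref{same} and \ref{everything} is correct and matches the paper, and your use of the Kazhdan projection to produce a nonzero invariant vector for some $E$-representation is a legitimate variant of the paper's route (which instead takes a faithful $E$-representation and uses that weak containment of $1_G$ plus (T) gives containment). The problem is the step you yourself flag as ``the main obstacle'': you never actually close it, and the mechanism you propose for closing it does not work. The decomposition $C^*_\un(G)=\C p_T\oplus J_T$ dualises to $B(G)=\C 1_G\oplus (\C p_T)^\perp$, but $(\C p_T)^\perp$ is \emph{not} an ideal of $B(G)$ (the product of two coefficients of representations without invariant vectors can pair nontrivially with $p_T$ --- already for finite groups, $|\phi|^2$ for $\phi$ a coefficient of a nontrivial irreducible has nonzero mean), so you cannot use this splitting together with the ideal property of $E$ to force the $\C 1_G$-component of $\phi_n$ into $E$. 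Uniform convergence of $\phi_n$ to a nonzero constant gives nothing for an algebraic (not closed) ideal $E$.

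The fix is available inside your own setup, and it is what the paper does. Your coefficients $\phi_n(g)=\langle \eta_n,u_g\eta\rangle$ converge to $\langle\xi,\eta\rangle 1_G$ not merely uniformly but in the \emph{norm of $B(G)$}: since $\phi_n-\langle\xi,u_{(\cdot)}\eta\rangle$ is the coefficient $\langle\eta_n-\xi,u_{(\cdot)}\eta\rangle$, its $B(G)$-norm is at most $\|\eta_n-\xi\|\,\|\eta\|\to 0$. Now $B(G)$ is a unital Banach algebra with unit $1_G$, and $\langle\xi,\eta\rangle 1_G$ is invertible since $\langle\xi,\eta\rangle\neq 0$; as the invertible group is open, $\phi_n$ is invertible in $B(G)$ for large $n$. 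There is no need to show $1_G\in E$ by a limiting argument: an ideal containing an invertible element is everything, so $E=B(G)$. You should also be aware that your sketch of (3)$\Rightarrow$(1) in the general (non-amenable, non-Haagerup) case asserts without proof that the translation-invariant ideal generated by coefficients of a net of representations converging to, but not containing, $1_G$ is proper; this is exactly the nontrivial point, and the paper handles this direction by deferring to the argument of Brown--Guentner rather than by the construction you describe.
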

\begin{proof} The equivalence between (2) and (3) follows from the above discussions, since
$C_{E,BG}^*(G)=C_{E_0}^*(G)=C_{\overline{E_0}}^*(G)$ equals $C_\un^*(G)$ if and only if
$\overline{E_0}=B(G)$, where $\overline{E_0}$ denotes the weak$^*$ closure of $E_0$.

An analogue of the equivalence between (1) and (3) has been shown for discrete $G$ in
\cite[Proposition 3.6]{Brown-Guentner:New_completions}, but with $B(G)$ replaced by $\ell^\infty(G)$.
The proof of the general  case follows along similar lines:
Assume that $G$ has property (T) and let $E$ be a translation invariant ideal of $B(G)$
such that  $C_{E,BG}^*(G)=C_\un^*(G)$. Then there exists a faithful $E$-representation $u:G\to \U(\H)$,
e.g., take the direct sum of all GNS-representations attached to elements in $E\cap P(G)$. Then
$1_G$ is weakly contained in $u$, and therefore, by property (T), $1_G$ is a subrepresentation of $u$.
Hence there exists a unit vector $\xi\in \H$ such that $u_s\xi=\xi$ for all $s\in G$.
Since $u$ is an $E$-representation, there exists a sequence $(\xi_n)$ of unit vectors in $\H$
which converges to $\xi$ and such that $s\mapsto\phi_n(s)=\langle \xi_n,u_s\xi_n\rangle$ lies in
$E\cap P(G)$ for all $n\in \N$.
 It follows that $\phi_n\to 1_G$ in the norm topology
of $B(G)$. Since $B(G)$ is a Banach algebra, it follows that $E\cap P(G)$ contains an invertible
element of $B(G)$. Thus $E=B(G)$.

For the converse direction we can use the same arguments as
given in the  proof of
\cite[Proposition 3.6]{Brown-Guentner:New_completions}.
\end{proof}

\section{Exotic crossed products}\label{ex cp sec}

More details on the standard material on universal and reduced crossed products in this section can be found in \cite{Echterhoff:Crossed}, \cite[Appendix A]{Echterhoff-Kaliszewski-Quigg-Raeburn:Categorical}, and \cite{Williams:Crossed}.

Let $(A,\alpha)$ be a $G$-\cstar{}algebra, i.e.\ $A$ is a \cstar{}algebra equipped with a homomorphism $\alpha$ from $G$ to the \Star{}automorphisms of $A$ such that the map $s\mapsto \alpha_s(a)$ is (norm) continuous for all $a\in A$.  The natural class of representations of $(A,\alpha)$ are \emph{covariant pairs}: pairs $(\pi,u)$ consisting of a  \Star{}representation of $A$ and a (unitary) representation of $G$ on the same Hilbert space $\H$ that satisfy the relation
\begin{equation}\label{cov}
u_s^*\pi(a)u_s=\pi(\alpha_s(a))
\end{equation}
for all $a\in A$ and $s\in G$.

Let $C_c(G,A)$ denote the space of norm continuous, compactly supported functions from $G$ to $A$, equipped with the \Star{}algebra operations:
$$
f*g(t)\defeq\int_G f(s)\alpha_s(g(s^{-1}t))\dd{s}\quad\mbox{and}\quad f^*(t)\defeq \Delta(t)^{-1}\alpha_t(f(t^{-1}))^*.
$$
Note that any covariant pair $(\pi,u)$ integrates to a \Star{}representation $\pi\rtimes u$ of $C_c(G,A)$ via the formula
$$
\pi\rtimes u(f)=\int_G \pi(f(s))u_s \dd s.
$$
It will be useful for later purposes to note that this generalises to actions on Hilbert modules and multiplier algebras in a natural way.  Precisely, if $\mathcal{E}$ is a Hilbert $B$-module then a covariant pair $(\pi,u)$ for $(A,\alpha)$ on $\mathcal{E}$ consists of a \Star{}representation $\pi:A\to\Lb(\E)$ from $A$ to the adjointable operators on $\E$, and a (strongly continuous, unitary) representation $u:G\to\U(\mathcal{E})$ from $G$ to the group of unitary operators on $\E$ satisfying the compatibility relationship in line \eqref{cov} above.
Regarding a C*-algebra $D$ as a Hilbert module over itself, the isomorphism
$\M(D)\cong \mathcal L(D)$ shows that covariant representations into multiplier algebras with strongly continuous unitary part
are special cases of covariant representations on Hilbert modules.

Analogously to the case of group algebras, the following two completions of $C_c(G,A)$ have been very widely studied.

\begin{definition}\label{uni red cp}
The \emph{universal completion} $A\rtimes_{\alpha,\un} G$ is the completion of $C_c(G,A)$ with respect to the norm
$$
\|f\|_\un\defeq \sup\{\|\pi\rtimes u(f)\|:(\pi,u)\text{ a covariant pair}\}.
$$
Let $\pi$ be the representation of $A$ on the Hilbert $A$-module $L^2(G,A)$ defined by the formula $(\pi(a)\xi)(s)=\alpha_{s^{-1}}(a)\xi(s)$.  Let $\lambda\otimes1$ be the representation of $G$ on this Hilbert module defined by $((\lambda\otimes 1)_t\xi)(s)=\xi(t^{-1}s)$.  These representations define a covariant pair $(\pi,\lambda\otimes 1)$ on the Hilbert module $L^2(G,A)$.

The \emph{reduced crossed product} $A\rtimes_{\alpha,\red}G$ is\magc{,} by definition\magc{,} the completion of $C_c(G,A)$ for the norm
$$
\|f\|_\red\defeq \|\pi\rtimes (\lambda \otimes 1)(f)\|.
$$
\end{definition}

The universal crossed product has the universal property that any  covariant pair $(\pi,u)$ with values in a Hilbert module $\E$ (in particular, in a Hilbert space) for $(A,\alpha)$ integrates to a \Star{}homomorphism $$
\pi\rtimes_\un u: A\rtimes_{\alpha,\un}G\to \Lb(\E)
$$
to the bounded (adjointable) operators on the Hilbert space (module) and, conversely, that every nondegenerate \Star{}representation $\sigma$ of $A\rtimes_{\alpha,\un}G$ is the integrated
form of some (nondegenerate) covariant representation:
There is a universal covariant representation $(\iota_A,\iota_G)$ of $(A,G)$ into
$\M(A\rtimes_{\alpha,\un}G)$ such that  $\sigma=\pi\rtimes u$ with $\pi=\sigma\circ \iota_A$ and $u=\sigma\circ \iota_G$. On the level of $\contc(G,A)$ the universal covariant pair is given by the formulas
\begin{equation}\label{eq:univ-cov-rep}
\iota_A(a)f(s)=af(s),\quad \iota_G(t)f(s)=\alpha_t(f(t^{-1}s)).
\end{equation}

Analogously to the group case, an exotic crossed product is roughly a completion of $C_c(G,A)$ for a norm between the maximal and reduced norms.  Motivated by both examples and applications, it seems reasonable to ask for some compatibility between such exotic crossed products as $A$ varies.  The minimal reasonable requirement here seems to be compatibility with \Star{}homomorphisms (we will discuss some stronger requirements later): to make this precise, note that if $\phi:A\to B$ is a $G$-equivariant \Star{}homomorphism then the function
\begin{equation}\label{*hom}
\phi\rtimes_c G:C_c(G,A)\to C_c(G,B),\quad f\mapsto \phi\circ f
\end{equation}
is a \Star{}homomorphism, and moreover, the assignment $\phi\mapsto \phi\rtimes_c G$ is functorial.

\begin{definition}\label{ex cp}
A \emph{crossed product} is an assignment to each $G$-\cstar{}algebra $(A,\alpha)$ of a completion $A\rtimes_{\alpha,\mu}G$ of $C_c(G,A)$ for a norm $\|\cdot \|_\mu$ such that:
\begin{enumerate}
\item for all $f\in C_c(G,A)$,
$$
\|f\|_\un \geq \|f\|_\mu\geq \|f\|_\red;
$$
\item for any equivariant \Star{}homomorphism $\phi:(A,\alpha)\to (B,\beta)$ the \Star{}homomorphism $\phi\rtimes_{c} G$ of line \eqref{*hom} extends to a \Star{}homomorphism $\phi\rtimes_{\mu} G:A\rtimes_{\alpha,\mu}G\to B\rtimes_{\beta,\mu}G$.
\end{enumerate}

A crossed product is \emph{exotic} if the associated norm differs from the maximal and reduced norms (on at least one $G$-\cstar{}algebra each).
\end{definition}

Thus a crossed product is a functor from the category of $G$-\cstar{}algebras and equivariant \Star{}homomorphisms to the category of \cstar{}algebras and \Star{}homomorphisms that sits between the universal and reduced completions in some sense.  One might wonder why we require that we only consider completions of $C_c(G,A)$ by norms $\|\cdot\|_\mu$ which dominate the reduced norm $\|\cdot\|_\red$. We come back to this point in Subsection \ref{rem-exotic-norms}

There are natural extensions of both the Brown-Guentner and Kaliszewski-Landstad-Quigg exotic group algebra constructions to exotic crossed products.  Here is the Brown-Guentner construction.
\begin{definition}\label{bg cp}
Let $C^*_E(G)$ be a group \cstar{}algebra as in Convention \ref{ex con}, and $(A,\alpha)$ a dynamical system.  The \emph{Brown-Guentner crossed product} $A\rtimes_{\alpha,E_{BG}}G$ (for short: BG crossed product) is the completion of $C_c(G,A)$ for the norm
$$
\|f\|_{E_{BG}}\defeq\sup\{\|\pi\rtimes u(f)\| ~|~(\pi,u)\text{ a covariant pair such that $u$ extends to }C^*_E(G)\}.
$$
\end{definition}

In order to define the Kaliszewski-Landstad-Quigg construction of crossed products, we need a little more notation.   Let $(\iota_A,\iota_G)$ be the universal covariant representation of $(A,G)$ into $\M(A\rtimes_{\alpha,\un}G)$ as in line \eqref{eq:univ-cov-rep} above.  Let $u_G\colon G\to \M(C^*_\un(G))$ be the universal representation of $G$ as in line \eqref{uni rep} above (that is, $u_G$ coincides with $\iota_G:G\to\U\M(\C\rtimes_\un G)$
if we identify $C_\un^*(G)$ with $\C\rtimes_\un G$).  Then the maps
$$
\iota_A\otimes 1:A\to \M(A\rtimes_{\alpha,\un}G\otimes C^*_\un(G)),\quad \iota_G\otimes u_G:G\to \U \M(A\rtimes_{\alpha,\un}G\otimes C^*_\un(G))
$$
define a covariant pair for $(A,\alpha)$ (here and throughout the rest of the paper, ``$\otimes$'' denotes the spatial tensor product of \cstar{}algebras).  The integrated form of this covariant pair (which exists by the universal property of $A\rtimes_{\alpha,\un}G$) is a \Star{}homomorphism
\begin{equation}\label{dual coact}
\widehat{\alpha}:A\rtimes_{\alpha,\un}G \to\M(A\rtimes_{\alpha,\un}G \otimes C^*_\un(G))
\end{equation}
called the \emph{dual coaction} associated to $(A,\alpha)$.

\begin{definition}\label{klq cp}
Let $C^*_E(G)$ be a group \cstar{}algebra, and $(A,\alpha)$ a dynamical system.  Let $q_E:C^*_\un(G)\to C^*_E(G)$ denote the canonical quotient map, and let
$$
\text{id}\otimes q_E:\M(A\rtimes_{\alpha,\un}G \otimes C^*_\un(G))\to \M(A\rtimes_{\alpha,\un}G \otimes C^*_E(G))
$$
denote the extension of the canonical tensor product \Star{}homomorphism to the multiplier algebras.

The \emph{Kaliszewski-Landstad-Quigg crossed product} $A\rtimes_{\alpha,E_{KLQ}}G$ is the completion of $C_c(G,A)$ for the norm
$$
\|f\|_{E_{KLQ}}\defeq \|(\text{id}\otimes q_E)\circ \widehat{\alpha}(f)\|.
$$
\end{definition}

The properties of the BG and KLQ crossed products that we will use are recorded below.  We will discuss some more properties of these functors in the next section.  Proofs of these results (in a slightly different form) and other basic facts about BG and KLQ crossed products can be found in \cite[Appendix A]{Baum-Guentner-Willett}, \cite[Section~6]{Kaliszewski-Landstad-Quigg:Exotic}, and \cite[Section 5]{Buss-Echterhoff:Exotic_GFPA}.

\begin{proposition}\label{basic prop}
Let $C^*_E(G)$ be an exotic group algebra.   The following facts hold for the associated BG and KLQ crossed products.
\begin{enumerate}
\item The BG and KLQ crossed products are both functorial for equivariant \Star{}homomorphisms, and in particular are crossed products in the sense of Definition \ref{ex cp}.
\item The correspondences $E\mapsto \rtimes_{E_{BG}}$ and $\rtimes_{E_{BG}}\mapsto (\C\rtimes_{E_{BG}}G)'$ are mutually inverse bijections between the collection of all $BG$ functors and all weak$^*$-closed translation invariant subspaces of $B(G)$ that contain $B_\red(G)$.  In particular, $\C\rtimes_{E_{BG}}G$ identifies with $C^*_E(G)$ via a \Star{}isomorphism that extends the identity map on $C_c(G)$.
\item \label{klq triv} The correspondences $E\mapsto \rtimes_{E_{KLQ}}$ and $\rtimes_{E_{BG}}\mapsto (C\rtimes_{E_{KLQ}}G)'$ are mutually inverse bijections between the collection of all $KLQ$ functors and all weak$^*$-closed translation invariant ideals in $B(G)$ that contain $B_\red(G)$.  In particular, for any group \cstar{}algebra $C^*_E(G)$, $\C\rtimes_{E_{KLQ}}G$ identifies with $C^*_{\langle E\rangle}(G)$ via a \Star{}isomorphism that extends the identity map on $C_c(G)$, where $\langle E\rangle$ is the weak$^*$-closed ideal in $B(G)$ generated by $E$.
\end{enumerate}
\end{proposition}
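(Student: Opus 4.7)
The plan is to split the argument into three movements: functoriality (part (1)), the unit-fiber identifications $\C\rtimes_{?}G$ buried in the ``in particular'' clauses of (2) and (3), and finally the bijections, which I expect to fall out from Proposition \ref{everything} once the unit fibers are in hand.

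For functoriality, fix an equivariant \Star{}homomorphism $\phi\colon(A,\alpha)\to(B,\beta)$. In the BG case, if $(\pi,u)$ is a covariant pair for $(B,\beta)$ such that $u$ extends to $C^*_E(G)$, then $(\pi\circ\phi,u)$ is a covariant pair for $(A,\alpha)$ with the \emph{same} $u$, yielding immediately $\|\phi\rtimes_c G(f)\|_{E_{BG}}\leq\|f\|_{E_{BG}}$. In the KLQ case, I would use that $\phi\rtimes_\un G$ intertwines the dual coactions $\hat\alpha$ and $\hat\beta$; tensoring with $\id\otimes q_E$ on both sides of that intertwining identity then forces boundedness of $\phi\rtimes_c G$ in the KLQ norms. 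The sandwich $\|\cdot\|_\un\geq\|\cdot\|_{E_\bullet}\geq\|\cdot\|_\red$ is routine: the upper bound is tautological; the lower bound uses that $\lambda$ factors through $C^*_E(G)$ (for BG, since $B_\red(G)\subseteq E$) and that $q_\red$ factors as $\pi_E\circ q_E$ for some $\pi_E\colon C^*_E(G)\onto C^*_\red(G)$ (for KLQ, apply $\id\otimes\pi_E$).

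The substance of the proof is the two unit-fiber computations. For BG, $\C\rtimes_{E_{BG}}G\cong C^*_E(G)$ is nearly tautological: a covariant pair on $\C$ is just a representation of $G$, and the BG requirement that it extend to $C^*_E(G)$ exactly matches the universal property of the quotient $C^*_\un(G)\onto C^*_E(G)$. For KLQ, the claim $\C\rtimes_{E_{KLQ}}G\cong C^*_{\langle E\rangle}(G)$ will be the main obstacle. My plan is to identify the kernel of $(\id\otimes q_E)\circ\delta\colon C^*_\un(G)\to\M\bigl(C^*_\un(G)\otimes C^*_E(G)\bigr)$ by dualising: for $\omega\in B(G)=C^*_\un(G)'$ and $\rho\in E=C^*_E(G)'$, the comultiplication identity $(\omega\otimes\rho)\circ\delta=\omega\cdot\rho$ (reflecting that $\delta$ is dual to pointwise multiplication on $B(G)$) shows that $a$ lies in the kernel iff $(\omega\cdot\rho)(a)=0$ for all such $\omega,\rho$. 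Since $\spn(B(G)\cdot E)$ is the algebraic ideal generated by $E$, whose weak$^*$-closure is $\langle E\rangle$, the kernel coincides with the pre-annihilator $I_{\langle E\rangle}$, giving $\C\rtimes_{E_{KLQ}}G=C^*_\un(G)/I_{\langle E\rangle}=C^*_{\langle E\rangle}(G)$.

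With the unit fibers in hand, the bijections in (2) and (3) fall out from Proposition \ref{everything} and Lemma \ref{ti id}. On the BG side the inverse assignment $\rtimes_{E_{BG}}\mapsto(\C\rtimes_{E_{BG}}G)'$ returns $E$ itself, so we obtain a genuine bijection with the full collection of weak$^*$-closed translation invariant subspaces $E\supseteq B_\red(G)$. On the KLQ side the same inverse returns $\langle E\rangle$, so the bijection becomes the identity once the domain is restricted to weak$^*$-closed translation invariant ideals, as in part (3). The two delicate points I would verify carefully are the duality identity $(\omega\otimes\rho)\circ\delta=\omega\cdot\rho$ and the passage from the algebraic ideal $\spn(B(G)\cdot E)$ to its weak$^*$-closure $\langle E\rangle$; everything else is bookkeeping with definitions.
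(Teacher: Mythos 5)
The paper itself offers no proof of Proposition \ref{basic prop}; it defers to \cite[Appendix A]{Baum-Guentner-Willett}, \cite[Section 6]{Kaliszewski-Landstad-Quigg:Exotic} and \cite[Section 5]{Buss-Echterhoff:Exotic_GFPA}. Measured against the arguments there, your proposal is correct and is in substance the same: functoriality by precomposing the representation part of a covariant pair (BG), respectively by the intertwining of dual coactions (KLQ); the essentially tautological BG unit fibre; and the identification $\C\rtimes_{E_{KLQ}}G\cong C^*_{\langle E\rangle}(G)$ by dualising the comultiplication. The two points you single out as delicate do go through: $(\omega\otimes\rho)\circ\delta=\omega\cdot\rho$ is immediate on $C_c(G)$ and extends by continuity, and since multiplication on $B(G)$ is separately weak$^*$ continuous (multiplication by a fixed $\phi$ is the adjoint of a bounded map on $C^*_\un(G)$), the weak$^*$ closure of $\spn\bigl(B(G)\cdot E\bigr)$ is exactly $\langle E\rangle$. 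Three smaller things should be made explicit in a full write-up. (i) To conclude $a\in\ker\bigl((\id\otimes q_E)\circ\delta\bigr)$ from the vanishing of product functionals you must separate points of the \emph{multiplier} algebra, not just of $C^*_\un(G)\otimes C^*_E(G)$; this works because $(\omega\otimes\rho)\bigl((b\otimes c)\,m\,(b'\otimes c')\bigr)=\bigl(\omega(b\,\cdot\,b')\otimes\rho(c\,\cdot\,c')\bigr)(m)$, so product functionals still separate multipliers. (ii) Your lower bound $\|\cdot\|_{E_{KLQ}}\geq\|\cdot\|_\red$ reduces matters to the case $E=B_\red(G)$, i.e.\ to the assertion that $(\id\otimes\lambda)\circ\widehat{\alpha}$ computes the reduced norm; that is a Fell-absorption argument (this map is unitarily equivalent to $\Lambda\otimes 1$ for the regular representation $\Lambda$) and needs to be quoted or proved. (iii) For the bijection in (3) to hit \emph{every} KLQ functor, and not only those built from weak$^*$-closed ideals, one needs $\rtimes_{E_{KLQ}}=\rtimes_{\langle E\rangle_{KLQ}}$ on all coefficient algebras; this follows from coassociativity of $\delta$ but is not contained in your unit-fibre computation, which only treats $A=\C$. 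None of these is a gap in the strategy, only in the level of detail.
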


In the next section, we will study functorial properties of the BG and KLQ crossed products in much more detail.

We conclude this section with some rather unnatural examples that are useful for constructing crossed products with `bad' properties.  For yet another construction of exotic crossed products, see \cite[Section 2.4 and Corollary 4.20]{Buss-Echterhoff-Willett}.

\begin{example}\label{ce cp}
Let $\mathcal{S}$ be a collection of $G$-\cstar{}algebras.  For any $G$-\cstar{}algebra $(A,\alpha)$, define a seminorm on $C_c(G,A)$ by
$$
\|f\|_{\mathcal{S},0}\defeq \sup \{\|\phi\rtimes_{c} G(f)\|_\un:\phi\in \Mor_G(A,B) \text{ for some } B\in \mathcal{S}\},
$$
where $\Mor_G(A,B)$ denotes the set of $G$-equivariant \Star{}homomorphisms $A\to B$. We then define a norm on $C_c(G,A)$ by
$$
\|f\|_{\mathcal{S}}\defeq \max\{\|f\|_\red,\|f\|_{\mathcal{S},0}\}.
$$
Let $A\rtimes_{\alpha,\mathcal{S}}G$ be the associated completion.  Then the assignment $(A,\alpha)\mapsto A\rtimes_{\alpha,\mathcal{S}}G$ is a crossed product functor (see \cite{Buss-Echterhoff-Willett}*{Lemma~2.5}).
\end{example}

\section{Properties of crossed products}\label{cp props sec}

We start this section by discussing some strong functoriality properties that a crossed product functor can have and give some useful characterisations of these.  We then discuss some applications to $K$-theory computations, duality theory and tensor products. We also discuss in Section~\ref{rem-exotic-norms} ``pseudo crossed products'', which are certain quotients of the full crossed product that do not necessarily lie above the reduced crossed product.

Before we start stating the properties of interest, we need
to give a brief discussion about crossed products of $G$-equivariant Hilbert modules and correspondences.
If $(B,\beta)$ is a $G$-algebra and $\E$ is a Hilbert $B$-module, then a compatible action of $G$ on $\E$ is
a strongly continuous homomorphism $\gamma:G\to \Aut(\E)$ such that
$$\gamma_s(x b)=\gamma_s(x)\beta_s(b)\quad\text{and}\quad \langle\gamma_s(x),\gamma_s(y)\rangle=
\beta_s(\langle x,y\rangle),$$
for all $x,y\in E, a\in A$ and $s\in G$. If $(A,\alpha)\mapsto A\rtimes_{\alpha,\mu}G$ is an exotic crossed-product functor
we may extend this functor to equivariant Hilbert modules as follows: If $(\E,\gamma)$ is a $G$-equivariant $(B,\beta)$
Hilbert module as above, then
there is a well-known canonical $C_c(G,B)$-valued inner product on $C_c(G,\E)$ together with a
compatible right module action of $C_c(G,B)$ on $C_c(G,\E)$ given by
$$
\braket{x}{y}_{C_c(G,B)}(t)=\int_G\beta_{s^{-1}}(\braket{x(s)}{y(st)}_B\,ds, \quad
(x\cdot \varphi)(t)=\int_G x(s)\beta_{s^{-1}}(\varphi(s^{-1}t))\,ds
$$
for $x,y\in C_c(G,\E)$ and $\varphi\in C_c(G,B)$. If we regard $C_c(G,B)$ a subalgebra of $B\rtimes_{\beta,\mu}G$, we obtain
a norm $\| x\|_\mu=\sqrt{\|\braket{x}{x}\|_\mu}$ on $C_c(G,\E)$. The actions and inner products then extend to the
$\mu$-completions and we obtain a $B\rtimes_{\beta,\mu} G$-Hilbert module $\E\rtimes_{\gamma,\mu}G$.

If $(A,\alpha)$ is another $G$-algebra, then a {\em $G$-equivariant $(A,\alpha)-(B,\beta)$ correspondenc}e is
a triple $(\E,\gamma,\phi)$ in which $(\E,\gamma)$ is a $G$-equivariant $(B,\beta)$-Hilbert module and
$\phi:A\to \mathcal L(\E)$ is an $\alpha-\Ad\gamma$-equivariant \Star{}homomorphism (possibly degenerate).
There is a category $\Cor(G)$ in which the objects are $G$-C*-algebras and the morphisms are equivalence classes of $(A,\alpha)-(B,\beta)$ correspondences, where $(\E,\gamma,\phi)\sim (\E',\gamma',\phi')$ if there is an isomorphism $\phi(A)\E\congto \phi'(A)\E'$ of Hilbert $B$-module commuting the left actions of $A$.
Composition of correspondences is given by taking internal tensor products
$$[(\E,\gamma,\phi)][(\F,\nu,\psi)]=[(\E\otimes_B\F, \gamma\otimes\nu,\phi\otimes1)].$$
We  write $\Cor:=\Cor(\{e\})$ for the correspondence category of the trivial group $\{e\}$.
Isomorphisms in the correspondence categories are precisely the Morita equivalences, i.e.\ correspondences
where $\phi:A\to \mathcal L(\E)$ induces an isomorphism $A\cong \K(\E)$.
Correspondence categories have been studied extensively in the literature (e.g. see
\cite{Echterhoff-Kaliszewski-Quigg-Raeburn:Categorical}), where usually the homomorphisms
$\Phi: A\to \mathcal L(\E)$ are assumed to be nondegenerate. But notice that every correspondence in our sense
is equivalent  to a nondegenerate correspondence, so the resulting categories are equivalent.

\begin{definition}\label{cp props}
Let $(A,\alpha)\to A\rtimes_{\alpha,\mu}G$ be a crossed-product functor. This functor:
\begin{enumerate}
\item  {\em extends to generalised homomorphisms}  if for any (possibly degenerate) $G$\nb-equivariant \Star{}homomorphism
 $\phi:A\to \M(B)$  there exists a \Star{}homomor\-phism $\phi\rtimes_{\mu} G:A\rtimes_{\mu}G\to \M(B\rtimes_{\mu}G)$
 which is given on the level of functions $f\in C_c(G,A)$ by $f\mapsto \phi\circ f$ in the
 sense that $\phi\rtimes_\mu G (f)g=(\phi\circ f)*g$ for all $g\in C_c(G,B)$;
 \item  has the {\em ideal property} if for every $G$-invariant closed ideal in a $G$-algebra $A$, the inclusion map $\iota:I\into A$ descends
to an injective *-homomorphism $\iota\rtimes_{\mu}G: I\rtimes_{\mu}G\into A\rtimes_{\mu}G$;
\item is {\em strongly Morita compatible} if for every $G$\nb-equivariant $(A,\alpha)-(B,\beta)$
equivalence bimodule $(\E,\gamma)$, the left action of $C_c(G, A)$ on $C_c(G,\E)$ given by
$$
(\phi\rtimes G(f)x)(t)\defeq\int_G \phi(f(s))\gamma_s(x(s^{-1}t))\,ds
$$
extends to an action of
$A\rtimes_{\alpha,\mu}G$ on $\E\rtimes_{\gamma,\mu}G$ such that $\E\rtimes_{\gamma,\mu}G$ becomes
an $A\rtimes_{\alpha,\mu}G-B\rtimes_{\beta,\mu}G$ equivalence bimodule;
\item is a \emph{correspondence functor} if for every $G$\nb-equivariant $(A,\alpha)-(B,\beta)$
correspondence $(\E,\gamma,\phi)$, the left action of $C_c(G, A)$ on $C_c(G,\E)$ above extends to an action $\phi\rtimes_\mu G$ of
$A\rtimes_{\alpha,\mu}G$ on $\E\rtimes_{\gamma,\mu}G$ such that $(\E\rtimes_{\gamma,\mu}G,\phi\rtimes_\mu G)$ becomes
an $A\rtimes_{\alpha,\mu}G-B\rtimes_{\beta,\mu}G$ correspondence;
\item has the {\em (full) projection property} if for every $G$\nb-algebra $A$ and every
$G$\nb-invariant (full) projection $p\in \M(A)$, the inclusion $\iota: pAp\into A$ descends to
a  faithful homomorphism $\iota\rtimes_{\mu}G: pAp\rtimes_{\alpha,\mu}G\to A\rtimes_{\alpha,\mu}G$;
\item has the {\em (full) hereditary-subalgebra property} if for every (full) hereditary
$G$\nb-invariant subalgebra $B$ of $A$, the inclusion $\iota:B\into A$ descends to a  faithful map
$\iota\rtimes_{\mu}G: B\rtimes_{\alpha,\mu}G\to A\rtimes_{\alpha,\mu}G$;
\item has the \emph{cp map property} if for any completely positive and $G$-equivariant map $\phi:A\to B$ of $G$-algebras, the map
$$
C_c(G,A)\to C_c(G,B),\quad f\mapsto \phi\circ f
$$
extends to a completely positive map from $A\rtimes_{\alpha,\mu} G$ to $B\rtimes_{\beta,\mu} G$.
\end{enumerate}
\end{definition}

\begin{remark}
Of course, it follows from the requirements for a correspondence functor $\rtimes_\mu$ that it extends
to a functor  $\rtimes_\mu: \Cor(G)\to \Cor$, and similarly a functor with the cp map property extends to a functor from the category of $G$-\cstar{}algebras and equivariant completely positive maps to the category of \cstar{}algebras and completely positive maps.
\end{remark}

We now record the relationships between these various properties.  Proposition~\ref{id props} is proved in \cite[Section~3]{Buss-Echterhoff-Willett}, and Theorems \ref{mor props} and \ref{corr props} in \cite[Section~4]{Buss-Echterhoff-Willett}.

\begin{proposition}\label{id props}
The following are equivalent for a crossed product functor:
\begin{enumerate}
\item the ideal property;
\item extension to generalised morphisms.
\end{enumerate}
\end{proposition}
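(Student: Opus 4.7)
The plan is to prove the two directions (2) $\Rightarrow$ (1) and (1) $\Rightarrow$ (2) separately, using the multiplier algebra $\M(I)$ as the target in one direction and as the receptacle for a suitable auxiliary $G$-algebra in the other.

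For (2) $\Rightarrow$ (1), my first step would be to produce a canonical candidate left-inverse (up to composition with the embedding into multipliers) for $\iota\rtimes_\mu G$. Given a $G$-invariant ideal $I\subseteq A$, the $G$-equivariant (typically degenerate) $*$-homomorphism $\phi\colon A\to \M(I)$ coming from left and right multiplication in $A$ descends by (2) to $\phi\rtimes_\mu G\colon A\rtimes_{\alpha,\mu}G\to \M(I\rtimes_{\alpha,\mu}G)$, implemented on $f\in C_c(G,A)$ by $f\mapsto \phi\circ f$. The point is then to observe that on the dense subalgebra $C_c(G,I)\subseteq I\rtimes_\mu G$, the composition $\phi\rtimes_\mu G\circ(\iota\rtimes_\mu G)$ agrees with the canonical embedding $I\rtimes_\mu G\hookrightarrow \M(I\rtimes_\mu G)$, which is injective; this forces $\iota\rtimes_\mu G$ to be injective.

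For (1) $\Rightarrow$ (2), given a (possibly degenerate) $G$-equivariant $\phi\colon A\to \M(B)$, the key construction is the auxiliary $G$-$C^*$-algebra $D\defeq B+\phi(A)\subseteq \M(B)$. The standard $C^*$-algebraic fact that the sum of a closed ideal and a $C^*$-subalgebra is closed shows $D$ is a genuine $C^*$-subalgebra of $\M(B)$; the natural action of $G$ on $\M(B)$ preserves $D$ and is pointwise norm continuous on elements of the form $b+\phi(a)$. With $B$ a closed $G$-invariant ideal in the $G$-algebra $D$, the ideal property gives an injective homomorphism $B\rtimes_\mu G\hookrightarrow D\rtimes_\mu G$ whose image, since $C_c(G,B)$ is a two-sided ideal of $C_c(G,D)$, is a closed two-sided ideal of $D\rtimes_\mu G$; hence multiplication yields a canonical $*$-homomorphism $D\rtimes_\mu G\to \M(B\rtimes_\mu G)$. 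Viewing $\phi$ as a $G$-equivariant homomorphism $A\to D$, functoriality of $\rtimes_\mu$ supplies $A\rtimes_\mu G\to D\rtimes_\mu G$, and the composition
\[
A\rtimes_\mu G \longrightarrow D\rtimes_\mu G \longrightarrow \M(B\rtimes_\mu G)
\]
is the desired $\phi\rtimes_\mu G$. A direct computation on $C_c(G,A)$ shows its action on $g\in C_c(G,B)$ is by convolution $f\ast g$ with $\phi\circ f\in C_c(G,D)$, matching the required formula.

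The construction is largely bookkeeping once the correct auxiliary object $D=B+\phi(A)$ is in hand; the main points requiring care are (i) verifying that $D$ is a norm-closed $G$-$C^*$-algebra — both the closedness of $B+\phi(A)$ inside $\M(B)$ and the pointwise norm continuity of the $G$-action, the latter of which is not automatic because $G$ need not act norm continuously on all of $\M(B)$ — and (ii) invoking the ideal property precisely to upgrade the \emph{a priori} merely set-theoretic ideal $C_c(G,B)\subseteq C_c(G,D)$ to an honest closed ideal $B\rtimes_\mu G\trianglelefteq D\rtimes_\mu G$, so that the map into the multiplier algebra is available. Everything after that reduces to functoriality of $\rtimes_\mu$ and a routine verification on $C_c(G,A)$.
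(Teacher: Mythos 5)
Your argument is correct and follows essentially the same route as the proof this paper cites (\cite[Section~3]{Buss-Echterhoff-Willett}): for (2)$\Rightarrow$(1) one composes $\iota\rtimes_\mu G$ with the descent of the multiplication map $A\to\M(I)$ to recover the canonical embedding $I\rtimes_\mu G\into\M(I\rtimes_\mu G)$, and for (1)$\Rightarrow$(2) one passes through the auxiliary $G$-algebra $D=B+\phi(A)\subseteq\M(B)$, using the ideal property to realise $B\rtimes_\mu G$ as a closed ideal of $D\rtimes_\mu G$. The two points you flag as needing care (closedness of $B+\phi(A)$ and norm continuity of the $G$-action on $D$) are exactly the right ones, and both check out.
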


\begin{theorem}\label{mor props}
The following are equivalent for a crossed product functor:
\begin{enumerate}
\item strong Morita compatibility;
\item the full hereditary subalgebra property;
\item the full projection property.
\end{enumerate}
\end{theorem}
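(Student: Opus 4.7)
The plan is to prove the cyclic chain of implications $(2) \Rightarrow (3) \Rightarrow (1) \Rightarrow (2)$.  The first is immediate: any $G$-invariant full projection $p \in \M(A)$ yields a $G$-invariant full hereditary subalgebra $pAp \subseteq A$, so the full projection property is just a special case of the full hereditary subalgebra property.

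For $(3) \Rightarrow (1)$ I would use the linking-algebra construction.  Given a $G$-equivariant $(A,\alpha)-(B,\beta)$ equivalence bimodule $(\E,\gamma)$, form the linking algebra $L = L(\E)$ with its canonical $G$-action $\gamma_L$ restricting to $\alpha$ and $\beta$ on the diagonal corners.  The two canonical multiplier projections $p, q \in \M(L)$ are $G$-invariant, complementary, and full (by fullness of $\E$), and satisfy $pLp = A$, $qLq = B$, and $pLq = \E$.  The full projection property then gives faithful inclusions $A\rtimes_{\alpha,\mu} G \hookrightarrow L\rtimes_{\gamma_L,\mu} G$ and $B\rtimes_{\beta,\mu} G \hookrightarrow L\rtimes_{\gamma_L,\mu} G$ identifying these crossed products with the full corners $p(L\rtimes_\mu G)p$ and $q(L\rtimes_\mu G)q$, respectively.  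The off-diagonal corner $p(L\rtimes_\mu G)q$ then carries a canonical $A\rtimes_\mu G - B\rtimes_\mu G$ Morita equivalence structure, and by comparing formulas at the level of $C_c(G,\E) = p\,C_c(G,L)\,q$ one sees it coincides --- inner products, bimodule actions, and norms --- with $\E \rtimes_{\gamma,\mu} G$.

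For $(1) \Rightarrow (2)$, let $B \subseteq A$ be a $G$-invariant full hereditary subalgebra, and consider the standard $B-A$ equivalence bimodule $X = \overline{BA}$ (with $\langle x,y\rangle_B = xy^*$ and $\langle x,y\rangle_A = x^*y$), which inherits a $G$-action from $\alpha$.  The decisive observation is that the $A\rtimes_{\alpha,\mu} G$-valued inner product on $C_c(G,X)$ coincides with $\xi^* * \eta$ computed inside $C_c(G,A)$, so the $\mu$-completion $X\rtimes_\mu G$ embeds isometrically as a closed subspace of $A\rtimes_{\alpha,\mu} G$.  By (1), $X\rtimes_\mu G$ is a $B\rtimes_{\beta,\mu} G - A\rtimes_{\alpha,\mu} G$ equivalence bimodule, so for $b \in C_c(G,B)$ the operator norm of left multiplication by $b$ on $X\rtimes_\mu G$ is exactly $\|b\|_{B\rtimes_\mu G}$; but since $X\rtimes_\mu G$ sits isometrically inside $A\rtimes_\mu G$ and this action is just restriction of left multiplication in $A\rtimes_\mu G$, that operator norm is at most $\|b\|_{A\rtimes_\mu G}$.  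Combining with the reverse inequality $\|b\|_{A\rtimes_\mu G} \le \|b\|_{B\rtimes_\mu G}$, which holds automatically by functoriality of $\rtimes_\mu$ applied to the inclusion $B \hookrightarrow A$, we conclude the canonical map $B\rtimes_\mu G \to A\rtimes_\mu G$ is isometric and hence faithful.

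The main technical obstacle is the identification $p(L\rtimes_\mu G)q \cong \E\rtimes_{\gamma,\mu} G$ in $(3)\Rightarrow(1)$: while the bimodule operations and both inner products manifestly match at the $C_c$-level, one must verify that the $\mu$-completions produce the same Banach space.  This reduces to the C*-identity $\|\xi\|^2 = \|\xi^*\xi\|_{L\rtimes_\mu G}$ for the corner norm on $p\,C_c(G,L)\,q$, combined with the faithful, isometric identification $B\rtimes_\mu G \cong q(L\rtimes_\mu G)q$ supplied by the full projection property applied to $q$; together these force the $B\rtimes_\mu G$-Hilbert-module norm on $\E\rtimes_{\gamma,\mu} G$ to agree with the norm inherited from $L\rtimes_\mu G$.
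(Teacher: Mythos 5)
Your proof is correct and takes essentially the same route as the paper's own argument (which is deferred to \cite[Section~4]{Buss-Echterhoff-Willett}): the linking-algebra/full-corner construction to pass from the full projection property to strong Morita compatibility, and the $\overline{BA}$ equivalence bimodule to recover the full hereditary subalgebra property, with the trivial specialisation closing the cycle. The technical point you isolate --- that the corner norm on $p\,C_c(G,L)\,q$ agrees with the Hilbert-module norm via the isometric identification of $q(L\rtimes_\mu G)q$ with $B\rtimes_{\beta,\mu}G$ --- is exactly the step that needs care, and your treatment of it is sound.
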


\begin{theorem}\label{corr props}
The following are equivalent for a crossed product functor:
\begin{enumerate}
\item being a correspondence functor;
\item the projection property;
\item the hereditary subalgebra property;
\item the cp map property;
\item having the properties in Theorem \ref{mor props} and Proposition \ref{id props}.
\end{enumerate}
\end{theorem}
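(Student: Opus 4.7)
The plan is to close the cycle $(1)\Rightarrow(2)\Leftrightarrow(3)\Rightarrow(1)$, then establish $(1)\Leftrightarrow(4)$ via the Stinespring construction, and finally dispose of the essentially formal equivalence $(1)\Leftrightarrow(5)$. Two standard devices do most of the work. The first is the \emph{linking algebra} $L_\E=\K(\E\oplus B)$ of a Hilbert $B$-module $\E$, which carries $\K(\E)$ and $B$ as complementary corners $p L_\E p$ and $q L_\E q$ with $\E$ realised as the off-diagonal corner $p L_\E q$; all of this is $G$-equivariant when $\E$ is. The second is the \emph{KSGNS/Stinespring construction}, which realises an equivariant completely positive map as a vector state of an equivariant correspondence.

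For $(1)\Rightarrow(2)$, given a $G$-invariant projection $p\in\M(A)$, view $pA$ as a $G$-equivariant Morita equivalence between $pAp$ and the closed ideal $\overline{ApA}$ of $A$. Since a correspondence functor is strongly Morita compatible (Morita equivalences are invertible correspondences), $pA\rtimes_\mu G$ provides a Morita equivalence between $pAp\rtimes_\mu G$ and $\overline{ApA}\rtimes_\mu G$. A correspondence functor also satisfies the ideal property, because applying (1) to the correspondence $B$ associated with a generalised homomorphism $A\to\M(B)$ yields the equivalent condition of Proposition~\ref{id props}; hence $\overline{ApA}\rtimes_\mu G$ embeds faithfully in $A\rtimes_\mu G$, and a linking-algebra argument identifies $pAp\rtimes_\mu G$ with the $p$-corner of $A\rtimes_\mu G$. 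The equivalence $(2)\Leftrightarrow(3)$ follows by realising any $G$-invariant hereditary subalgebra $B\subseteq A$ as an inductive limit of corners $eAe$ with $e$ a $G$-invariant projection in $\M(B)$, and transferring injectivity across the limit.

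For $(3)\Rightarrow(1)$, let $(\E,\gamma,\phi)$ be a $G$-equivariant $(A,B)$-correspondence and form $L=\K(\E\oplus B)$ with the induced $G$-action. The map $\phi\colon A\to\mathcal L(\E)\subseteq\M(L)$ is a $G$-equivariant generalised homomorphism, so by Proposition~\ref{id props} (whose hypothesis follows from (3)) it descends to $\phi\rtimes_\mu G\colon A\rtimes_\mu G\to\M(L\rtimes_\mu G)$. The hereditary subalgebra property identifies $B\rtimes_\mu G=q(L\rtimes_\mu G)q$ and $\K(\E)\rtimes_\mu G=p(L\rtimes_\mu G)p$ as complementary corners, while $p(L\rtimes_\mu G)q$ is a $(\K(\E)\rtimes_\mu G)$-$(B\rtimes_\mu G)$-imprimitivity submodule agreeing with $\E\rtimes_\mu G$ at the level of $C_c(G,\E)$. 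This endows $\E\rtimes_\mu G$ with the required left action of $A\rtimes_\mu G$ through $\phi\rtimes_\mu G$, and compatibility with composition of correspondences follows by chasing linking algebras of internal tensor products. The equivalence $(1)\Leftrightarrow(4)$ is then via Stinespring: for equivariant cp $\phi\colon A\to B$, an equivariant KSGNS dilation $(\E_\phi,\pi_\phi,\xi_\phi)$ satisfies $\phi(a)=\xi_\phi^*\pi_\phi(a)\xi_\phi$, so the cp descent of $\phi$ is the compression of $\pi_\phi\rtimes_\mu G$ by $\xi_\phi$; conversely, a correspondence gives rise to cp maps $a\mapsto y^*\phi(a)x$ by polarisation, whose simultaneous descent under (4) assembles into the module action.

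Finally, $(1)\Leftrightarrow(5)$ is formal: (1) implies strong Morita compatibility (Morita equivalences are invertible correspondences) and the ideal property (via the generalised-homomorphism correspondence as above), while conversely these two together supply precisely the ingredients used in the $(3)\Rightarrow(1)$ argument. The principal technical obstacle is the verification in $(3)\Rightarrow(1)$ that the algebraic module structure on $C_c(G,\E)$ extends to the $\mu$-completion with exactly the norm inherited from $p(L\rtimes_\mu G)q$: this is precisely where one needs the hereditary subalgebra property, which guarantees that $B\rtimes_\mu G$ sits faithfully inside $L\rtimes_\mu G$, so that the $B\rtimes_\mu G$-valued inner product on $\E\rtimes_\mu G$ picks up its intended norm.
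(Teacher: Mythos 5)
Your toolbox (linking algebras and the KSGNS dilation) is the right one, and the implications $(1)\Rightarrow(2)$, $(3)\Rightarrow(1)$, $(1)\Rightarrow(4)$ and $(1)\Leftrightarrow(5)$ are essentially the argument of \cite[Section~4]{Buss-Echterhoff-Willett}, to which the paper delegates the proof. But your cycle does not close, for two concrete reasons. First, $(2)\Rightarrow(3)$ as you state it fails: a $G$\nb-invariant hereditary subalgebra $B\subseteq A$ is \emph{not} an inductive limit of corners $eAe$ --- a projection $e\in\M(B)$ does not even multiply $A$, and already for the trivial group the hereditary subalgebra $C_0((0,1])\subseteq C([0,1])$ is approximated by no corners at all, since $\M(C([0,1]))=C([0,1])$ has only trivial projections. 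The correct route to $(3)$ is the one you already use for $(1)\Rightarrow(2)$: apply strong Morita compatibility to the $B$--$\overline{ABA}$ equivalence bimodule $\overline{BA}$ and the ideal property to $\overline{ABA}\trianglelefteq A$; that is, prove $(5)\Rightarrow(3)$, not $(2)\Rightarrow(3)$. Second, your $(4)\Rightarrow(1)$ via polarisation does not work: for a general $x\in\E$ the map $a\mapsto\langle x,\phi(a)x\rangle$ is \emph{not} $G$\nb-equivariant (only $\gamma$-invariant vectors give equivariant cp maps), so hypothesis $(4)$ cannot be applied to it. The repair is to prove $(4)\Rightarrow(2)$ instead: the compression $a\mapsto pap$ is an equivariant ccp left inverse to $\iota\colon pAp\into A$, so its $\mu$-descent splits $\iota\rtimes_\mu G$ and forces injectivity.

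With those two steps removed, nothing in your argument derives anything from the projection property alone, so $(2)$ is left dangling; note that your $(3)\Rightarrow(1)$ secretly needs the ideal property (to descend the \emph{generalised} homomorphism $\phi\colon A\to\M(L_\E)$), which you obtain from $(3)$ because ideals are hereditary, but which is not available from $(2)$. This is precisely where \cite{Buss-Echterhoff-Willett} argues differently: they prove $(2)\Rightarrow(1)$ directly by replacing $\M(L_\E)$ with the $G$-\cstar{}algebra $D=\phi(A)+\K(\E\oplus B)\subseteq\Lb(\E\oplus B)$, which contains $L_\E$ as an ideal; then $A\to D$ is an honest equivariant \Star{}homomorphism, plain functoriality gives the descent, and only the projection property for $q\in\M(D)$ (with $qDq=B$) is needed to put the correct norm on $\overline{C_c(G,\E)}\subseteq D\rtimes_\mu G$. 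I suggest reorganising the proof as $(1)\Rightarrow(5)\Rightarrow(3)\Rightarrow(2)\Rightarrow(1)$ together with $(1)\Rightarrow(4)\Rightarrow(2)$, incorporating the two fixes above.
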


The properties of BG and KLQ functors listed below are proved in \cite[Sections 4 and 5]{Buss-Echterhoff-Willett}.

\begin{example}\label{bg props}
The BG crossed product associated to a group algebra $C^*_E(G)$ always has the ideal property.  It is strongly Morita compatible if and only if it is a correspondence functor, if and only if the canonical quotient map $C^*_\un(G)\onto C^*_E(G)$ is an isomorphism.
\end{example}

\begin{example}\label{klq props}
KLQ crossed products are always correspondence functors, and thus have all the properties considered above.  It is conceivable that all correspondence functors are KLQ functors.  This seems unlikely, however, partly as there is a construction of correspondence functors that are not obviously KLQ functors: see \cite[Section 2.4 and Corollary 4.20]{Buss-Echterhoff-Willett}.
\end{example}

On the other hand, the ideal property does not always hold, as the following example shows.

\begin{example}\label{bad ex}
Let $\mathcal{S}$ consist of $C_0(0,1]$ equipped with the trivial action, and apply the construction of Example \ref{ce cp} for this $\mathcal{S}$.  Then the inclusion $C_0(0,1]\to C[0,1]$ of trivial $G$-algebras does not induce an injection on crossed products for any non-amenable $G$.

We do not, however, know if there are strongly Morita compatible crossed products without the ideal property (we guess the answer is yes, by an elaboration of the above, but the details are currently elusive).
\end{example}

The following theorem is one of the main applications \cite[Section 6]{Buss-Echterhoff-Willett} of our correspondence functor machinery; it can be regarded as another good functoriality property of correspondence functors.

\begin{theorem}\label{desc amen}
Say $(A,\alpha)\mapsto A\rtimes_{\alpha,\mu}G$ is a correspondence functor.  Then there exists a \emph{descent functor} $\rtimes_\mu:KK^G\to KK$ that agrees with $\rtimes_\mu$ on objects, and on morphisms coming from equivariant \Star{}homomorphisms.

Moreover, if $G$ is $K$-amenable, then the canonical quotients
$$
A\rtimes_{\alpha,\un}G\onto A\rtimes_{\alpha,\mu}G \onto A\rtimes_{\alpha,\red}G
$$
are $KK$-equivalences.
\end{theorem}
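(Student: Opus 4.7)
The plan is to build the descent functor $j_\mu: KK^G \to KK$ from the correspondence functor structure combined with standard Kasparov descent machinery, and then invoke Cuntz's classical argument for the $K$-amenability claim.

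For the descent on cycles, represent a class in $KK^G(A,B)$ by a cycle $(\E,\phi,F)$; by the usual averaging argument we may assume that $F$ is $G$-continuous. The underlying $G$-equivariant correspondence $(\E,\gamma,\phi)$ descends to an $A\rtimes_{\alpha,\mu}G - B\rtimes_{\beta,\mu}G$ correspondence $(\E\rtimes_{\gamma,\mu}G,\, \phi\rtimes_\mu G)$ by the correspondence functor hypothesis. The operator $F$, acting pointwise on $\contc(G,\E)$, extends to an adjointable operator $\tilde F$ on $\E\rtimes_{\gamma,\mu}G$. The Kasparov axioms for $(\E\rtimes_{\gamma,\mu}G,\, \phi\rtimes_\mu G,\, \tilde F)$ reduce, via a density argument on $\contc(G,A)$, to the corresponding properties of $F$ on $\E$; the key observation is that rank-one operators $\theta_{\xi,\eta}$ with $\xi,\eta\in\contc(G,\E)$ descend to rank-one operators on the crossed-product Hilbert module. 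Homotopy invariance is automatic from functoriality for \Star{}homomorphisms applied to the evaluation maps of a homotopy, so one obtains a well-defined map $j_\mu: KK^G(A,B) \to KK(A\rtimes_\mu G,\, B\rtimes_\mu G)$.

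For Kasparov-product compatibility I would work in the correspondence picture of $KK$: composition of $KK$-classes corresponds to internal tensor products of correspondences, which $\rtimes_\mu$ preserves by hypothesis, so the product compatibility follows once one knows that descended operators combine via Kasparov's connection construction in the expected way. Alternatively, Kasparov's original proof of the maximal and reduced descents carries over essentially verbatim, since its only substantive input --- descent of adjointable operators on equivariant correspondences --- is precisely what a correspondence functor guarantees.

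For the $K$-amenability claim, Cuntz's argument supplies an element $\eta\in KK^G(\C,\C)$ representing $1_G$ whose maximal descent $j_\un(\eta)$ produces a $KK$-inverse to $A\rtimes_{\alpha,\un}G \onto A\rtimes_{\alpha,\red}G$ after external multiplication with $1_A$. Since the chain $A\rtimes_{\alpha,\un}G\onto A\rtimes_{\alpha,\mu}G\onto A\rtimes_{\alpha,\red}G$ intertwines the three descent functors $j_\un, j_\mu, j_\red$, applying $j_\mu$ to $\eta$ factors the same $KK$-inverse through $A\rtimes_{\alpha,\mu}G$, yielding $KK$-inverses for both quotients. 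The main obstacle throughout is the Kasparov-product compatibility of $j_\mu$: this is the technical heart of any descent construction, but working in the correspondence picture of $KK$ isolates the verifications to exactly the structural properties (tensor-product preservation, compactness behaviour under descent) that correspondence functors are designed to preserve, reducing the task to a conceptually clean --- if somewhat lengthy --- check.
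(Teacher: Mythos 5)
Your outline matches the proof the paper cites (Theorem \ref{desc amen} is quoted from \cite[Section~6]{Buss-Echterhoff-Willett} rather than proved here): the descent is built exactly by descending Kasparov cycles through the correspondence-functor structure, with functoriality for generalised morphisms giving the extension of $G$-continuous operators in $\Lb(\E)=\M(\K(\E))$ to $\M(\K(\E)\rtimes_\mu G)$ and strong Morita compatibility giving the identification $\K(\E)\rtimes_\mu G\cong\K(\E\rtimes_{\gamma,\mu}G)$, after which Kasparov's verification of the cycle axioms and of product compatibility carries over, and the $K$-amenability statement follows by the Cuntz/Julg--Valette factorisation argument you describe. The one point you gloss over is that the adjointable extension $\tilde F$ and the descent of the compactness conditions are not automatic for an exotic completion --- they are precisely where the two halves of the correspondence-functor hypothesis (ideal property plus Morita compatibility) enter --- but your appeal to ``compactness behaviour under descent'' identifies the right mechanism.
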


\subsection{K-theory of exotic group algebras}

\begin{corollary}\label{ideal}
Let $G$ be a $K$-amenable group, and let $E$ be a translation invariant
 algebraic ideal in $B(G)$ which contains  $B_c(G)=B(G)\cap\contc(G)$.
Then the canonical quotient maps
$$
C^*_{\un}(G)\onto C^*_{E,BG}(G)\onto C^*_\red(G),\quad C^*_{\un}(G)\onto C^*_{E,KLQ}(G)\onto C^*_\red(G)
$$
are all $KK$-equivalences.
\end{corollary}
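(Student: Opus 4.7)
The plan is to deduce both chains of $KK$-equivalences from Theorem~\ref{desc amen} (descent for correspondence functors under $K$-amenability), specialised to appropriate KLQ correspondence functors and the trivial $G$-algebra $A=\C$. The KLQ chain is immediate, but the BG chain requires the repackaging of Proposition~\ref{same}: the BG \emph{group algebra} $C^*_{E,BG}(G)$ coincides with a KLQ group algebra $C^*_{E_0,KLQ}(G)$ for $E_0\defeq\spn(E\cap P(G))$, even though a BG functor almost never extends to a correspondence functor (Example~\ref{bg props}). This repackaging is the key conceptual step, and is available only at the group-algebra level --- which is precisely where $KK$-equivalence is asserted.

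For the KLQ chain, apply Theorem~\ref{desc amen} to the KLQ crossed product functor $\rtimes_{E_{KLQ}}$, which is a correspondence functor by Example~\ref{klq props}. Before doing so, one checks that $\rtimes_{E_{KLQ}}$ really is a crossed product functor in the sense of Definition~\ref{ex cp}: by Proposition~\ref{basic prop}(\ref{klq triv}) this amounts to the weak$^*$-closed ideal $\langle E\rangle$ in $B(G)$ generated by $E$ containing $B_\red(G)$, which holds since $B_\red(G)$ equals the weak$^*$-closure of $B_c(G)\subseteq E$. Specializing Theorem~\ref{desc amen} to $A=\C$ with trivial action produces the chain $C^*_\un(G)\onto C^*_{E,KLQ}(G)\onto C^*_\red(G)$ of $KK$-equivalences.

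For the BG chain, Proposition~\ref{same} provides a $C^*$-isomorphism $C^*_{E,BG}(G)\cong C^*_{E_0,KLQ}(G)$ extending the identity on $C_c(G)$. The hypothesis $B_c(G)\subseteq E$ forces $B_c(G)\cap P(G)\subseteq E_0$; in particular, the matrix coefficients $s\mapsto\langle\xi,\lambda_s\xi\rangle$ for $\xi\in C_c(G)$ lie in $E_0$, so the left regular representation $\lambda$ is an $E_0$-representation. Hence the KLQ functor attached to $E_0$ dominates the reduced crossed product and is a correspondence crossed product functor, so the argument of the previous paragraph applies with $E_0$ in place of $E$ to produce $KK$-equivalences $C^*_\un(G)\onto C^*_{E_0,KLQ}(G)\onto C^*_\red(G)$. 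Transporting through the isomorphism from Proposition~\ref{same} yields the first chain of the statement.

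The main obstacle is mostly conceptual: since BG functors fail the correspondence hypothesis of Theorem~\ref{desc amen} except in trivial cases, a naive approach would force a separate, harder $KK$-theoretic argument for the BG chain. Proposition~\ref{same} dissolves this obstacle by exhibiting $C^*_{E,BG}(G)$ as a KLQ group algebra, reducing everything to a single application of the descent theorem. The minor technical check is that the hypothesis $B_c(G)\subseteq E$ is strong enough to guarantee reduced-norm domination for the KLQ functor attached to $E_0$, which is straightforward once one observes that $\lambda$-matrix coefficients with $C_c(G)$-vectors are in $B_c(G)\cap P(G)$.
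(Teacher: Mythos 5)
Your overall strategy is exactly the paper's: reduce both chains to Theorem~\ref{desc amen} applied to a KLQ correspondence functor at $A=\C$, using Proposition~\ref{same} to recognise $C^*_{E,BG}(G)$ as a KLQ group algebra. There is, however, one genuine gap, and it occurs where you pass from the crossed product functor back to the group algebra. Proposition~\ref{basic prop} part~\eqref{klq triv} identifies $\C\rtimes_{E_{KLQ}}G$ with $C^*_{\langle E\rangle}(G)$, where $\langle E\rangle$ is the weak$^*$-closed \emph{ideal} generated by $E$, whereas the algebra you need to control is $C^*_{E,KLQ}(G)=C^*_{\overline{E}}(G)$, built from the weak$^*$ closure only. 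These coincide exactly when $\overline{E}$ is itself an ideal. For the KLQ chain this follows from the hypothesis that $E$ is an algebraic ideal together with separate weak$^*$ continuity of multiplication on $B(G)$ --- a fact you never invoke. For the BG chain the same issue arises for $E_0=\spn(E\cap P(G))$: you verify reduced-norm domination for $E_0$, but never that $E_0$ is an ideal, and without that Theorem~\ref{desc amen} only yields $KK$-equivalences for $C^*_{\langle E_0\rangle}(G)$, which a priori differs from $C^*_{\overline{E_0}}(G)\cong C^*_{E,BG}(G)$.

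That this is not a cosmetic omission is shown by Example~\ref{no go} and Remark~\ref{ideal nec}: for $E=B_\red(G)\oplus\C 1$ (translation invariant and weak$^*$ closed, but not an ideal) the conclusion of the corollary fails, so any correct proof must use the ideal hypothesis somewhere; as written, your BG argument uses only $B_c(G)\subseteq E$. The repair is the paper's one-line observation: since $P(G)$ is closed under pointwise products and spans $B(G)$, the ideal property of $E$ forces $E_0=\spn(E\cap P(G))$ to be an algebraic ideal, whence $\overline{E}$ and $\overline{E_0}$ are weak$^*$-closed ideals and $\langle E\rangle=\overline{E}$, $\langle E_0\rangle=\overline{E_0}$. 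With that inserted, your argument closes and is essentially the one given in the paper.
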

\begin{proof}
Using Theorem \ref{desc amen}, if suffices to show that $C^*_{E,BG}(G)$ and $C^*_{E,KLQ}(G)$ are of the form $\C\rtimes_\mu G$ for some correspondence functor $\rtimes_\mu$.

For $C^*_{E,KLQ}(G)$, let $\overline{E}$ be the weak$^*$ closure of $E$ in $B(G)$, which is an ideal by weak$^*$ continuity of multiplication on $B(G)$.   Let $\rtimes_\mu$ be the KLQ crossed product functor associated to $C^*_{\overline{E},KLQ}(G)$.  Proposition \ref{basic prop} part \eqref{klq triv} implies that $\C\rtimes_\mu G$ identifies with $C^*_{\overline{E},KLQ}(G)$; however $C^*_{\overline{E},KLQ}(G)$ is clearly the same as $C^*_{E,KLQ}(G)$ by definition of KLQ group algebras.

For $C^*_{E,BG}(G)$, let $\widetilde{E}$ be the span of $P(G)\cap E$, and let $F$ be the weak$^*$-closure of $\widetilde{E}$.
Then  $C^*_{E,BG}(G)=C_{F, KLQ}^*(G)$ by Corollary \ref{same same}. As $P(G)$ is closed under products in $B(G)$
and every element in $B(G)$ is a linear combination of elements in $P(G)$, $\widetilde{E}$ is an ideal in $B(G)$ and $F$ is a weak$^*$-closed ideal.  Thus the result follows from the result for
KLQ group algebras.
\end{proof}

\begin{example}
Say $G=F_2$ or $G=SL(2,\mathbb{R})$.  Then $G$ is $K$-amenable, so Corollary \ref{ideal} implies that the uncountably many exotic group \cstar{}algebras $C^*_p(G)$ from Theorem \ref{lp diff} are all $KK$-equivalent.
\end{example}

\begin{example}\label{no go}
It is tempting from the above to guess that if $G$ is $K$-amenable, then \emph{all} exotic group algebras (or even crossed products) have the same $K$-theory.  This is false: in fact any non-amenable group admits an exotic group algebra such that the canonical quotient $C^*_E(G)\to C^*_\red(G)$ does not even induce an isomorphism on $K$-theory.  This can be achieved by setting $E=B_\red(G)\oplus\C1$, for example.

Compare also Remark \ref{ideal nec} in this regard, which implies that Corollary \ref{ideal} is in some sense the best possible result that can be deduced about $K$-theory of group algebras using our correspondence functor machinery.
\end{example}

\subsection{Duality}

\begin{definition}\label{dual fun}
A crossed product functor $\rtimes_\mu$ is a \emph{duality functor} if there is a \Star{}homomorphism $\widehat{\alpha}_\mu$ making the diagram below commute
$$
\xymatrix{A\rtimes_{\alpha,\un}G \ar[d]  \ar[r]^-{\widehat{\alpha}} & \M(A\rtimes_{\alpha,\un}G \otimes C^*_\un(G)) \ar[d] \\
A\rtimes_{\alpha,\mu}G  \ar@{-->}[r]^-{\widehat{\alpha}_\mu} & \M(A\rtimes_{\alpha,\mu}G \otimes C^*_\un(G))},
$$
where $\widehat{\alpha}$ is the dual coaction of line \eqref{dual coact} above, and the vertical maps are the canonical quotients.
\end{definition}

For the proof of the following theorem see \cite[Section~6]{Buss-Echterhoff-Willett}.

\begin{theorem}\label{dual the}
Correspondence functors are duality functors.
\end{theorem}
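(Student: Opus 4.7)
My plan is to realise the dashed arrow $\widehat\alpha_\mu$ as the descent of an explicit $G$-equivariant correspondence under $\rtimes_\mu$, composed with a natural identification of the target. Set $B\defeq A\otimes C^*_\un(G)$ equipped with the action $\beta\defeq \alpha\otimes \Ad(u_G)$, where $\Ad(u_G)$ denotes the inner $G$-action on $C^*_\un(G)$ coming from the universal representation $u_G$. I form the $G$-equivariant $(A,\alpha)$-$(B,\beta)$ correspondence whose underlying Hilbert $B$-module is $B$ itself (standard module over itself with $G$-action $\beta$) and whose left action is $\phi(a)\defeq a\otimes 1\in \M(B)$; equivariance of $\phi$ is immediate because $\Ad(u_G(s))$ fixes $1$.

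By hypothesis $\rtimes_\mu$ is a correspondence functor, so Theorem \ref{corr props} descends this correspondence to one from $A\rtimes_{\alpha,\mu}G$ to $B\rtimes_{\beta,\mu}G$; the descended left action provides a $*$-homomorphism $\Phi\colon A\rtimes_{\alpha,\mu}G\to \M(B\rtimes_{\beta,\mu}G)$.

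Next I would identify $B\rtimes_{\beta,\mu}G$ with $A\rtimes_{\alpha,\mu}G\otimes C^*_\un(G)$ in two sub-steps. First, $\beta$ is exterior equivalent to the product action $\beta'\defeq \alpha\otimes \id$ via the $\beta'$-cocycle $w_s\defeq 1\otimes u_G(s)\in \U\M(B)$; since an exterior equivalence is implemented by a $G$-equivariant Morita self-equivalence of $B$, strong Morita compatibility of correspondence functors (Theorem \ref{mor props}) yields a canonical $*$-isomorphism $B\rtimes_{\beta,\mu}G\cong B\rtimes_{\beta',\mu}G$. Second, covariant representations of $(B,\alpha\otimes\id)$ correspond to covariant representations of $(A,\alpha)$ together with commuting representations of $C^*_\un(G)$, which identifies $B\rtimes_{\beta',\mu}G$ with $A\rtimes_{\alpha,\mu}G\otimes C^*_\un(G)$. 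Composing $\Phi$ with this chain of isomorphisms defines $\widehat\alpha_\mu$.

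To verify commutativity of the defining square, I trace the images of the generators $\iota_A(a)$ and $\iota_G(s)$ of $A\rtimes_{\alpha,\un}G$ through both composites: by construction each is sent to $\iota_A^\mu(a)\otimes 1$ and $\iota_G^\mu(s)\otimes u_G(s)$ respectively, so the two routes coincide on all of $A\rtimes_{\alpha,\un}G$. The main obstacle is the second half of the identification step: since $C^*_\un(G)$ is in general non-nuclear, the isomorphism $B\rtimes_{\beta',\mu}G\cong A\rtimes_{\alpha,\mu}G\otimes C^*_\un(G)$ using the spatial tensor product is not a direct consequence of the nuclear tensor-product identity advertised in the introduction, and it must be established by a careful covariant-pair computation respecting the $\mu$-norm.
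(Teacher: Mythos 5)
Your opening moves are sound: the triple $(B,\beta,\phi)$ with $B=A\otimes C^*_\un(G)$, $\beta=\alpha\otimes\Ad(u_G)$ and $\phi(a)=a\otimes 1$ is a nondegenerate $G$-equivariant correspondence, and the exterior equivalence between $\beta$ and $\beta'=\alpha\otimes\id$ via the cocycle $1\otimes u_G(s)$ is correctly handled by strong Morita compatibility (modulo the routine check that the resulting equivalence of crossed products really is the expected isomorphism on $C_c(G,B)$). The fatal problem is the step you yourself flag at the end: the identification $(A\otimes C^*_\un(G))\rtimes_{\alpha\otimes\id,\mu}G\cong (A\rtimes_{\alpha,\mu}G)\otimes C^*_\un(G)$ is not merely ``in need of a careful covariant-pair computation'' --- it is false in general. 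Test it on the universal functor with $A=\C$: covariant pairs for a trivial action are commuting pairs of representations, so the left-hand side is $C^*_\un(G)\otimes_{\mathrm{max}}C^*_\un(G)$, while the right-hand side is the spatial tensor product; for free groups these differ (this is the failure of Kirchberg's conjecture), and in any case their equality is far beyond what any covariant-pair computation could deliver. Since $\rtimes_\un$ is itself a correspondence functor for which the theorem is trivially true ($\widehat\alpha_\un=\widehat\alpha$), any proof hinging on this isomorphism is factoring the problem incorrectly. Even the weaker statement that would actually suffice --- a surjection $B\rtimes_{\beta',\mu}G\onto (A\rtimes_{\alpha,\mu}G)\otimes C^*_\un(G)$ matching up generators --- amounts to showing that the covariant pair $(\iota_A^\mu\otimes\id,\,\iota_G^\mu\otimes 1)$ integrates to the $\mu$-crossed product, which is essentially the theorem in disguise; you have deferred the entire difficulty into an unproven step.

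The repair is to never tensor the coefficient algebra by the non-nuclear $C^*_\un(G)$. Fix a faithful nondegenerate representation $C^*_\un(G)\subseteq \Lb(\H)$, so that $u_G$ becomes a unitary representation $v$ of $G$ on $\H$, and run your untwisting with $\K(\H)$ in place of $C^*_\un(G)$: the generalised homomorphism $a\mapsto a\otimes 1\in\M(A\otimes\K(\H))$ is equivariant from $\alpha$ to $\alpha\otimes\Ad v$; the latter is exterior equivalent to $\alpha\otimes\id$; and now the tensor product property (Proposition \ref{nuc out}) applies because $\K(\H)$ \emph{is} nuclear, giving $(A\otimes\K(\H))\rtimes_{\alpha\otimes\id,\mu}G\cong (A\rtimes_{\alpha,\mu}G)\otimes\K(\H)$. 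Chasing generators, one obtains a \Star{}homomorphism $A\rtimes_{\alpha,\mu}G\to\M((A\rtimes_{\alpha,\mu}G)\otimes\K(\H))$ sending $\iota_A(a)\mapsto\iota_A^\mu(a)\otimes 1$ and $\iota_G(s)\mapsto\iota_G^\mu(s)\otimes v_s$; its composition with the quotient $A\rtimes_{\alpha,\un}G\onto A\rtimes_{\alpha,\mu}G$ agrees with $(q_\mu\otimes\id)\circ\widehat\alpha$ on $C_c(G,A)$, and its image lies in $\M((A\rtimes_{\alpha,\mu}G)\otimes C^*_\un(G))$ because the generators do and both multiplier algebras embed compatibly into the adjointable operators on the underlying spatial representation. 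This is the route taken in \cite[Section~6]{Buss-Echterhoff-Willett}, and it is exactly the nuclearity of $\K(\H)$ that makes the tensor-product step legitimate where your version of it is not.
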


\begin{remark}
Let $C^*_E(G)$ be a group algebra as in Convention \ref{ex con}.  It is not difficult to see that the associated BG crossed product is a duality functor if and only if $E$ is an ideal in $B(G)$.  In particular, it follows from Example \ref{bg props} that Theorem \ref{dual the} is not optimal.
\end{remark}

\begin{remark} Note that every duality functor $\rtimes_\mu$ admits a version of Imai-Takai duality:
The homomorphism $\widehat\alpha_\mu$ is a coaction and there is a canonical isomorphism $A\rtimes_{\alpha,\mu}G\rtimes_{\dual\alpha_\mu}\dualG\cong A\otimes\K(L^2(G))$.
We refer to \cite[Section~6]{Buss-Echterhoff-Willett} for more details on this and to
\cite{Buss-Echterhoff:maximal-dual, Kaliszewski-Landstad-Quigg:coaction-functors} for results which show
how duality techniques combined with Theorem \ref{dual the}
can be efficiently used to extend correspondence crossed-product functors to other categories like Fell-bundles over $G$ or
partial $G$-actions.
\end{remark}

\begin{remark}\label{ideal nec}
If $\rtimes_\mu$ is a duality functor and $E$ is the dual space of $\C\rtimes_\mu G$, thought of as a subspace of $B(G)$, then $\C\rtimes_\mu G\cong C^*_E(G)$ carries a coaction of $G$ and hence $E$ is an ideal of $B(G)$ by Remark~\ref{co rem}.  Combined with Proposition \ref{basic prop} part \eqref{klq triv} and Example \ref{klq props}, it follows that an exotic group algebra $C^*_E(G)$ is of the form $\C\rtimes_\mu G$ for a correspondence functor $\rtimes_\mu$ if and only if the dual space of $C^*_E(G)$ is an ideal in $B(G)$.

In particular, the result of Corollary \ref{ideal} is in some sense the optimal application of Theorem \ref{desc amen} to computing the $K$-theory of exotic group algebras.
\end{remark}

\subsection{Tensor products}

As an example of an application of Theorem \ref{corr props} that does not appear in our paper \cite{Buss-Echterhoff-Willett}, here we discuss the relationship of crossed products and spatial tensor products.  In the next section, we will apply this to discuss the relationship between general crossed products and the reduced crossed product.

Let $\rtimes_\mu$ be a crossed product for $G$, which is functorial for generalised morphisms.  Let $(A,\alpha)$ be a $G$-\cstar{}algebra, and let $(B,\text{id})$ be a trivial $G$-\cstar{}algebra.  Then there is an equivariant \Star{}homomorphism $A\to \M(A\otimes B)$ defined by $a\mapsto a\otimes 1$, and functoriality for generalised morphisms implies that this integrates to a \Star{}homomorphism
\begin{equation}\label{into mult}
A\rtimes_{\alpha,\mu} G \to \M((A\otimes B)\rtimes_{\alpha\otimes \text{id},\mu} G).
\end{equation}
Assume now moreover that either $B$ or $A\rtimes_{\alpha,\mu} G$ is nuclear.  Then as the natural \Star{}homomorphism $B\to \M((A\otimes B)\rtimes_{\alpha\otimes \text{id},\mu} G)$ commutes with the image of the \Star{}homomorphism in line \eqref{into mult}, our nuclearity assumptions give a \Star{}homomorphism
$$
(A\rtimes_{\alpha,\mu} G) \otimes B \to \M((A\otimes B)\rtimes_{\alpha\otimes \text{id},\mu} G);
$$
checking on generators, it is not difficult to see that the image of this \Star{}homomorphism is in fact in $(A\otimes B)\rtimes_{\alpha\otimes \text{id},\mu} G$, and thus we have a canonical \Star{}homomorphism
\begin{equation}\label{tp map}
(A\rtimes_{\alpha,\mu} G) \otimes B \to (A\otimes B)\rtimes_{\alpha\otimes \text{id},\mu} G.
\end{equation}

\begin{definition}\label{tp prop}
Let $(A,\alpha)\mapsto A\rtimes_{\alpha,\mu} G$ be a crossed product functor which is functorial for generalised morphisms.  The functor has the \emph{tensor product property} if whenever $B$ is a trivial $G$-\cstar{}algebra and $(A,\alpha)$ is a $G$-\cstar{}algebra such that one of $B$ or $A\rtimes_{\alpha,\mu} G$ is nuclear, the map in line \eqref{tp map} above is an isomorphism.
\end{definition}

Since the map in line~\eqref{tp map}  is always surjective the tensor product property is equivalent to the injectivity of~\eqref{tp map}.

\begin{example}\label{bg tp}
The BG functor associated to a group algebra $C^*_E(G)$ has the tensor product property.  Indeed, we have already noted that BG functors are functorial for generalised morphisms.  It suffices to prove that there is is a faithful representation $\pi$ of  $(A\rtimes_{\alpha,E_{BG}} G) \otimes B$ that extends to $(A\otimes B)\rtimes_{\alpha\otimes \text{id},E_{BG}} G$.  Now, by definition of the BG crossed product and the spatial tensor product, we may take a faithful $\pi$ of the form $(\sigma\rtimes u)\otimes \rho$ for some covariant pair $(\sigma,u)$ for $(A,\alpha)$ with $u$ an $E$-representation, and $\rho$ a representation of $B$.  The desired extension is then the integrated form of $(\sigma\otimes \rho,u\otimes \text{id})$, which is a covariant pair for $A\otimes B$ with $u\otimes \text{id}$ an $E$-representation.
\end{example}

\begin{remark}\label{no tp case}
If $B$ is unital, then the discussion above makes sense without assuming that $\rtimes_\mu$ is functorial for generalised morphisms, and thus the map in line \eqref{tp map} makes sense for any unital nuclear $B$.  At this level of generality, the map in line \eqref{tp map} can certainly fail to be an isomorphism, even for $B=M_2(\C)$: use the construction in Example \ref{ce cp} with $\mathcal{S}$ consisting of $M_2(\C)$ equipped with the trivial representation \cite[Example 2.6]{Buss-Echterhoff-Willett}.
\end{remark}

\begin{remark}\label{no tp}
On the other hand, the map in line \eqref{tp map} is always an isomorphism for unital and commutative $B$, and always an isomorphism for $B$ commutative if $\rtimes_\mu$ is functorial for generalised homomorphisms \cite[Remark 3.7 and Lemma 3.6]{Buss-Echterhoff-Willett} (it is false for general commutative \cstar{}algebras without assuming the ideal property: compare Example \ref{bad ex} above).
\end{remark}

\begin{proposition}\label{nuc out}
Correspondence functors have the tensor product property.
\end{proposition}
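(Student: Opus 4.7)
My plan is to use the cp map property enjoyed by correspondence functors (Theorem~\ref{corr props}) together with the slice map property of the minimal C*-tensor product. The map $\Phi$ in line \eqref{tp map} is always surjective, so the content of the proposition is injectivity. Under the standing hypothesis that either $B$ or $A\rtimes_{\alpha,\mu}G$ is nuclear, the spatial tensor product $(A\rtimes_{\alpha,\mu} G)\otimes B$ coincides with the minimal tensor product, on which slice maps by states on $B$ separate points.

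For every state $\omega$ on $B$ I would form the $G$-equivariant completely positive map $\id_A\otimes\omega\colon A\otimes B\to A$, equivariance being automatic since the $G$-action on $B$ is trivial. By the cp map property, this descends to a completely positive map
$$
\Psi_\omega \defeq (\id_A\otimes\omega)\rtimes_\mu G\colon (A\otimes B)\rtimes_{\alpha\otimes\id,\mu}G \to A\rtimes_{\alpha,\mu} G,
$$
whose action on $C_c(G,A\otimes B)$ is given by $f\mapsto (\id_A\otimes\omega)\circ f$. The key step is to verify that $\Psi_\omega\circ\Phi$ coincides with the ordinary slice map $\id\otimes\omega$ on $(A\rtimes_{\alpha,\mu}G)\otimes B$. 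I would check this on the dense subspace $C_c(G,A)\otimes_{\mathrm{alg}} B$ by chasing elementary tensors $f\otimes b$, which are sent to $\omega(b)f$ by both maps; continuity of the two cp maps then extends the equality to the whole tensor product.

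Injectivity of $\Phi$ is then immediate: if $\Phi(x)=0$, then $(\id\otimes\omega)(x)=\Psi_\omega(\Phi(x))=0$ for every state $\omega$ on $B$, so the slice map property of the minimal tensor product forces $x=0$. The main point of care is the verification of the compatibility $\Psi_\omega\circ\Phi=\id\otimes\omega$; this is not conceptually difficult but relies on having the explicit $C_c$-level formula for the descent of equivariant cp maps provided by Theorem~\ref{corr props}, plus a small check on elementary tensors. I do not anticipate any further obstacle, since all remaining inputs---boundedness of slice maps on $\otimes_{\mathrm{min}}$ and their joint faithfulness---are standard.
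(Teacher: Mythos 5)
Your proposal is correct and follows essentially the same route as the paper's proof: both use the cp map property of correspondence functors to descend the equivariant slice maps $\id_A\otimes\omega$ to the $\mu$-crossed products, verify the compatibility with the ordinary slice maps on the dense subalgebra $C_c(G,A)\otimes_{\operatorname{alg}}B$, and conclude injectivity from the fact that slice maps by states separate points of the spatial tensor product. The only difference is presentational (the paper argues by contradiction with a single well-chosen state, you argue directly with all states), which is immaterial.
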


\begin{proof}
We must show that the map in line \eqref{tp map} is injective, so assume for contradiction that $x$ is an element of the kernel of the map above.  Then there is a state $\phi$ on $B$ such that the slice map
$$
1_{A\rtimes_{\alpha,\mu}G}\otimes \phi:(A\rtimes_{\alpha,\mu}G)\otimes B\to A\rtimes_{\alpha,\mu}G
$$
is non-zero on $x$.  As the slice map $1_A\otimes \phi:A\otimes B\to B$ is an equivariant completely positive map, the cp map property implies that it integrates to a map $(1_A\otimes \phi)\rtimes_\mu G$ on the $\mu$-crossed products.  This gives rise to a  diagram
$$
\xymatrix{(A\rtimes_{\alpha,\mu}G)\otimes B \ar[r] \ar[d]^-{1_{A\rtimes_{\alpha,\mu}G}\otimes \phi} & (A\otimes B)\rtimes_{\alpha\otimes \text{id},\mu}G \ar[d]^{(1_A\otimes \phi)\rtimes_\mu G}\\
A\rtimes_{\alpha,\mu}G \ar@{=}[r] & A\rtimes_{\alpha,\mu}G },
$$
which commutes by checking on the dense subalgebra $C_c(G,A)\otimes_{\operatorname{alg}} B$.  As the `right-down' composition sends $x$ to zero, and the `down-right' composition does not, we have our contradiction.
\end{proof}

\subsection{Crossed products and the reduced group \cstar{}algebra}\label{rem-exotic-norms}

Throughout this paper, we only consider exotic group algebras and crossed products that dominate the reduced group algebra.  Here we discuss the sort of degeneracy that can occur if one does not do this; some of the ideas underlying this section were pointed out to us by Joachim Cuntz.

For the purposes of this subsection only, by a \emph{pseudo crossed product} functor we mean a functor that satisfies all of the conditions of Definition \ref{ex cp} except possibly that it does not dominate the reduced crossed product norm, and that possibly the norm $\|\cdot\|_\mu$ is only a semi-norm on $C_c(G,A)$ for some \cstar{}algebras $(A,\alpha)$.  Similarly, a \emph{pseudo group algebra} is a \cstar{}algebra completion of $C_c(G)$ that satisfies the conditions of Definition \ref{ex gp alg} except that possibly the norm $\|\cdot\|_\mu$ does not dominate the reduced norm, and is only a semi-norm on $C_c(G)$.   Note that the definitions of the BG and KLQ crossed products still make sense if we allow pseudo group \cstar{}algebras as the input, but then give rise to pseudo crossed products.  Note moreover that the definitions of functoriality for generalised morphisms, and of the tensor product property still make sense.

\begin{lemma}\label{degen}
Say $\rtimes_\mu$ is a pseudo-crossed product functor that is functorial for generalised morphisms, and such that the crossed product $C_0(G)\rtimes_{\tau,\mu} G$ (where $\tau$ is the left translation action) is non-zero.  Then $\C\rtimes_\mu G$ dominates the reduced group \cstar{}algebra.

If moreover $\rtimes_\mu$ has the tensor product property, then $\rtimes_\mu$ dominates $\rtimes_\red$.
\end{lemma}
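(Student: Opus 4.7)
The plan is to exploit the classical isomorphism $C_0(G)\rtimes_{\tau,\un}G\cong \K(L^2(G))$ (given by the Schrödinger covariant pair $(M,\lambda)$ consisting of the multiplication representation and left regular representation on $L^2(G)$), and then use functoriality for generalised morphisms, to force $\rtimes_\mu$ to factor through the regular representation. As a first step, note that $\K(L^2(G))$ is simple, so the canonical quotient $C_0(G)\rtimes_{\tau,\un}G\onto C_0(G)\rtimes_{\tau,\mu}G$ is either zero or an isomorphism; the non-vanishing hypothesis forces the latter, so $C_0(G)\rtimes_{\tau,\mu}G\cong \K(L^2(G))$, and in particular this crossed product is nuclear.

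For the first claim, observe that the constant function $1\in C_b(G)=\M(C_0(G))$ is $\tau$-invariant, so the unital inclusion $\iota:\C\to \M(C_0(G))$, $z\mapsto z\cdot 1$, is $G$-equivariant. Functoriality for generalised morphisms yields
$$\iota\rtimes_\mu G\colon \C\rtimes_\mu G\to \M(C_0(G)\rtimes_{\tau,\mu}G)\cong \M(\K(L^2(G)))=\B(L^2(G)).$$
Composing with the quotient $C^*_\un(G)\onto \C\rtimes_\mu G$ we obtain a \Star{}homomorphism $C^*_\un(G)\to \B(L^2(G))$ that carries each universal group unitary $\iota_G(g)\in \M(C^*_\un(G))$ to the corresponding unitary $\iota_G^{C_0(G)}(g)\in \M(C_0(G)\rtimes_{\tau,\un}G)$, which in turn is carried to $\lambda_g$ by the Schrödinger identification. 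Integrating, this composite equals the left regular representation $\lambda\colon C^*_\un(G)\to C^*_\red(G)$, so $\lambda$ factors through $\C\rtimes_\mu G$ and $\|\cdot\|_\mu\geq \|\cdot\|_\red$ on $C_c(G)$.

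For the second claim we repeat the strategy for a general $G$-algebra $(A,\alpha)$. The unital equivariant map $\iota_A\colon (A,\alpha)\to (\M(A\otimes C_0(G)),\alpha\otimes\tau)$, $a\mapsto a\otimes 1$, gives a \Star{}homomorphism $\iota_A\rtimes_\mu G\colon A\rtimes_{\alpha,\mu}G\to \M((A\otimes C_0(G))\rtimes_{\alpha\otimes\tau,\mu}G)$. The diagonal action is trivialised by the equivariant isomorphism $\Phi\colon (A\otimes C_0(G),\id_A\otimes\tau)\congto (A\otimes C_0(G),\alpha\otimes\tau)$, $\Phi(F)(s)=\alpha_s(F(s))$, and applying the tensor product property (legitimate since $C_0(G)\rtimes_{\tau,\mu}G\cong\K(L^2(G))$ is nuclear by the first step) produces a chain of isomorphisms
$$(A\otimes C_0(G))\rtimes_{\alpha\otimes\tau,\mu}G \;\cong\; (A\otimes C_0(G))\rtimes_{\id_A\otimes\tau,\mu}G \;\cong\; A\otimes(C_0(G)\rtimes_{\tau,\mu}G)\;\cong\; A\otimes \K(L^2(G)).$$
Composing, we obtain a \Star{}homomorphism $\Psi\colon A\rtimes_{\alpha,\mu}G\to \M(A\otimes\K(L^2(G)))$; the claim is that $\Psi$ equals the regular representation, i.e., the integrated form of the covariant pair $(\tilde\alpha,1\otimes\lambda)$ on $L^2(G,A)$, whose image is by definition $A\rtimes_{\alpha,\red}G$. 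Granting this, $\|\cdot\|_\mu\geq \|\cdot\|_\red$ on $C_c(G,A)$ as required.

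The main obstacle is the final identification of $\Psi$ with the regular representation. The verification reduces to checking agreement on the two natural families of generators in $\M(A\rtimes_{\alpha,\mu}G)$: an element $a\in A$ maps via $\iota_A$ to the constant multiplier $a\otimes 1$, which $\Phi^{-1}$ sends to the function $s\mapsto \alpha_{s^{-1}}(a)$—precisely $\tilde\alpha(a)$—and the tensor product property identifies this with the corresponding element in $\M(A\otimes\K(L^2(G)))$; meanwhile, the universal group unitary $\iota_G(g)\in \M(A\rtimes_{\alpha,\mu}G)$ is transported by functoriality to the group unitary of the target crossed product, which after the Schrödinger identification on the $C_0(G)$-factor becomes $1_A\otimes \lambda_g$. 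Assembling these gives the covariant pair $(\tilde\alpha,1\otimes\lambda)$, closing the argument.
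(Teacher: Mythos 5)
Your proposal is correct and follows essentially the same route as the paper: simplicity of $\K(L^2(G))$ to identify $C_0(G)\rtimes_{\tau,\mu}G$, generalised functoriality applied to the unit inclusions $\C\to\M(C_0(G))$ and $A\to\M(A\otimes C_0(G))$, and Fell's trick plus the tensor product property (justified by nuclearity of $\K(L^2(G))$) to land in $\M(A\otimes\K(L^2(G)))$. The only difference is that you spell out the generator check identifying the composite with the regular representation, which the paper leaves as "not difficult to check"; your verification is accurate.
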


\begin{proof}
From generalised functoriality, the unit inclusion $\C\to\M(C_0(G))$ induces a \Star{}homomorphism $\C\rtimes_\mu G\to \M(C_0(G)\rtimes_{\tau,\mu} G)$.  If $C_0(G)\rtimes_\mu G$ is non-zero, then it canonically identifies with $\K(L^2(G))$ (the compact operators on $L^2(G)$), as this is true for $\rtimes_\un$, and as $\K(L^2(G))$ is simple.  Thus $\M(C_0(G)\rtimes_{\tau,\mu} G)$ identifies canonically with $\Lb(L^2(G))$.   However, it is easy to see that the composition
$$
\C\rtimes_\mu G\to \M(C_0(G)\rtimes_{\tau,\mu} G)\cong \Lb(L^2(G))
$$
is the integrated form of the regular representation.

Assume now that  in addition $\rtimes_\mu$ has the tensor product property. Then, by generalised functoriality,  for any $G$-\cstar{}algebra the canonical inclusion $A\to \M(A\otimes C_0(G))$, $a\mapsto a\otimes 1$ gives rise to a \Star{}homomorphism
$$
A\rtimes_{\alpha,\mu}G \to \M((A\otimes C_0(G))\rtimes_{\alpha\otimes \tau,\mu}G).
$$
On the other hand, using Fell's trick, the tensor product property and the fact (as above) that $C_0(G)\rtimes_{\tau,\mu}G=\K(L^2(G))$ gives
\begin{align*}
\M((A\otimes C_0(G))\rtimes_{\alpha\otimes \tau,\mu}G) & \cong \M((A\otimes C_0(G))\rtimes_{\text{id}\otimes \tau,\mu}G) \\ & \cong \M(A\otimes (C_0(G)\rtimes_{\tau,\mu}G)) \\ & \cong \M(A\otimes \K(L^2(G))).
\end{align*}
It is not difficult to check that the composed map
$$
A\rtimes_{\alpha,\mu}G\to \M(A\otimes \K(L^2(G)))
$$
is induced by an integrated form of the regular representation.
\end{proof}

A similar argument can be used for functors without the ideal property whenever $G$ admits a unital \cstar{}dynamical system $(A,\alpha)$ such that the maximal crossed product $A\rtimes_{\alpha,\un}G$ is simple.  Indeed, say then $\rtimes_\mu$ is a pseudo-crossed product.  If $A\rtimes_{\mu,\alpha}G$ is non-zero, then the canonical quotient map $A\rtimes_{\alpha,\un}G\onto A\rtimes_{\alpha,\mu}G$ is an isomorphism (and similarly for the reduced crossed product); hence the unit inclusion $\C\to A$ induces a commutative diagram
$$
\xymatrix{ \C\rtimes_\red G \ar@{^(->}[d] & \C\rtimes_\un G\ar[l]\ar[d] \ar[r] & \C\rtimes_\mu G \ar[d]\\
A\rtimes_{\alpha,\red} G & A\rtimes_{\alpha,\un} G\ar@{=}[l] \ar@{=}[r] & A\rtimes_{\alpha,\mu} G }
$$
from which it follows that the image of
$$
\C\rtimes_\mu G\to A\rtimes_{\alpha,\mu}G
$$
is the reduced group \cstar{}algebra.  Hence $\C\rtimes_\mu G$ dominates $C^*_\red(G)$.  Such an $(A,\alpha)$ exists whenever $G$ is discrete and exact: indeed, one may take $A=C(M)$ where $M$ is a minimal subsystem of the Stone-\v{C}ech compactification of $G$, as discussed in \cite[Sections 1.4-1.5]{Kerr}, and apply the result of \cite{Archbold-Spielberg} combined with the fact that the action of $G$ on $M$ is amenable, and thus $C(M)\rtimes_\un G=C(M)\rtimes_\red G$ \cite[Theorem 4.3.4]{Brown-Ozawa}.   Plausibly one could adapt such an argument to exact locally compact groups, but the necessary ingredients seem to be missing from the literature.  It is not known whether an $A$ with the above properties can exist for non exact groups $G$.

\section{The minimal exact correspondence functor}

One of the main motivations for considering exotic crossed products is to better understand counterexamples to the famous Baum-Connes conjecture.  The conjecture, with coefficients, in its original form claimed that a certain {\em  assembly map}
$$\as_{(G,A)}^\red: K_*^{\operatorname{top}}(G;A)\to K_*(A\rtimes_{\red}G)$$
should always be an isomorphism. We refer to \cite{Baum-Connes-Higson:BC} for the definition of the
assembly map and the group $K_*^{\operatorname{top}}(G;A)$.
We will not try to summarise the conjecture here, but note that all known failures of the Baum-Connes conjecture for groups with coefficients \cite{Higson-Lafforgue-Skandalis} are essentially all down to failures of exactness as in the following definition.

\begin{definition}\label{exact}
A crossed product functor $\mu$ is exact if for any short exact sequence of $G$-\cstar{}algebras
$$
\xymatrix{ 0\ar[r] & I \ar[r] & A \ar[r] & B \ar[r] & 0 },
$$
the corresponding sequence of crossed products
\begin{equation}\label{ses}
\xymatrix{ 0\ar[r] & I\rtimes_{\mu} G \ar[r] & A\rtimes_{\mu} G \ar[r] & B\rtimes_{\mu} G \ar[r] & 0 }
\end{equation}
is still exact.
\end{definition}

The universal crossed product is always exact, but infamous examples due to Gromov \cite{Gromov:random} (and recently given quite a satisfactory treatment by Osajda and others \cite{Osajda}) show that the reduced crossed product can fail to be exact for some groups.  Groups for which the reduced crossed product is exact were first studied by Kirchberg and Wassermann \cite{Kirchberg-Wassermann}, who called them \emph{exact} groups.  The class of exact groups includes many interesting classes of groups, such as almost connected groups, discrete linear groups, amenable groups, and hyperbolic groups.

In the sequence in line \eqref{ses} above, the map to $B\rtimes_{\mu} G$ is always surjective (as its image contains the dense \Star{}subalgebra $C_c(G,B)$), and the composition of the two central \Star{}homomorphisms is zero (by functoriality).  Hence the potential failures of exactness are that the map $I\rtimes_{\mu} G\to A\rtimes_{\mu} G$ might not be injective, and that the kernel of the map $A\rtimes_{\mu} G\to B\rtimes_{\mu} G$ might properly contain the image of the map $I\rtimes_{\mu} G\to A\rtimes_{\mu} G$.  The first of these is just the ideal property that we already considered in the previous section.  The second however, is independent of any of the (other) properties we have considered so far: indeed, BG crossed products are always exact, while generally failing most of the other properties in the previous section. On the other hand, KLQ crossed products have all of the properties considered in the previous section, but can fail to be exact as the reduced crossed product can fail to be exact.

Baum, Guentner, and Willett proposed to `fix' the Baum-Connes conjecture (for groups, with coefficients) by replacing the reduced crossed product that is traditionally used to define the conjecture with an exotic crossed product that is automatically exact.
Indeed, there is a canonical way to construct an assembly
\begin{equation}\label{eq-assembly}
\as_{(G,A)}^\mu: K_*^{\operatorname{top}}(G;A)\to K_*(A\rtimes_{\mu}G)
\end{equation}
for any crossed-product functor $\rtimes_\mu$, since the original map always factors over the $K$-theory $K_*(A\rtimes_{\un}G)$
of the universal crossed product. It is well known that the assembly map for the maximal crossed product, which is exact,
fails to be an isomorphism in general. So the exact crossed-product functor for the reformulated conjecture
should be as close to the reduced one as
possible.
For compatibility with Morita equivalences (and also to ensure the existence of a descent functor in $E$-theory) they require their functor to in addition have the following property.

\begin{definition}\label{mor com}
Let $\mathcal{K}_G$ denote the compact operators on $L^2(G)\otimes l^2(\N)$, equipped with the adjoint action $\Lambda$ coming from the tensor product of the regular representation and the trivial representation on $l^2(\N)$.  As for any $G$-\cstar{}algebra $(A,\alpha)$, the $G$-\cstar{}algebras $(A\otimes \mathcal{K}_G,\alpha\otimes \Lambda)$ and $(A\otimes \mathcal{K}_G,\alpha\otimes \text{id})$ are equivariantly Morita equivalent, strong Morita compatibility of the universal crossed product can be used to give a canonical isomorphism
$$
(A\otimes \mathcal{K}_G)\rtimes_{\alpha\otimes\Lambda,\un}G\cong (A\rtimes_{\alpha,\un}G)\otimes \mathcal{K}_G
$$
(see \cite[Corollary 5.4]{Buss-Echterhoff-Willett}).  A crossed product $\mu$ is \emph{Morita compatible} if this descends to an isomorphism
$$
(A\otimes \mathcal{K}_G)\rtimes_{\alpha\otimes\Lambda,\mu}G\cong (A\rtimes_{\alpha,\mu}G)\otimes \mathcal{K}_G.
$$
\end{definition}

In order to define the exotic crossed products used in the reformulations of the Baum-Connes conjecture, note that there is a natural order on the collection of all crossed products defined by saying $\rtimes_\mu\geq \rtimes_\nu$ if the identity on $C_c(G,A)$ extends to a \Star{}homomoprhism $A\rtimes_\mu G \to A\rtimes_\nu G$ for all $G$-\cstar{}algebras $A$.  Using an idea of Kirchberg, Baum, Guentner and Willett prove the following theorem.

\begin{theorem}\label{min ex}
There is a minimal exact and Morita compatible crossed product functor $\rtimes_\E$ with respect to the order above.

This crossed product can be used to reformulate the Baum-Connes conjecture with coefficients, asking the assembly map
$\as_{(G,A)}^\E$  of (\ref{eq-assembly}) to be an isomorphism for all $G$-algebras $(A,\alpha)$, in such a way that the reformulated conjecture has no (at time of writing!) known counterexamples, and such that some of the counterexamples to the old conjecture are confirming examples for the reformulated conjecture.
\end{theorem}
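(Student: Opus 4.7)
The plan is to construct $\rtimes_\E$ as the infimum of the (nonempty) class of exact, Morita compatible crossed product functors with respect to the partial order $\leq$. First I would verify the class is nonempty: $\rtimes_\un$ is exact (by the universal property of $A\rtimes_{\alpha,\un}G$, the sequence of universal crossed products associated to $0\to I\to A\to B\to 0$ is exact) and is Morita compatible directly from Definition~\ref{mor com}. For each $G$-algebra $(A,\alpha)$, I would then define
$$A\rtimes_{\alpha,\E}G\defeq (A\rtimes_{\alpha,\un}G)/I_\E(A),\qquad I_\E(A)\defeq \cl{\sum_\mu I_\mu(A)},$$
where $I_\mu(A)=\ker(A\rtimes_{\alpha,\un}G\onto A\rtimes_{\alpha,\mu}G)$ and the sum ranges over all exact, Morita compatible $\rtimes_\mu$. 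The potential set-theoretic issue -- that such $\mu$ might form a proper class -- is neutralised by the observation that, with $A$ fixed, the ideals $I_\mu(A)$ all lie inside the fixed \cstar{}algebra $A\rtimes_{\alpha,\un}G$, so the sum takes place in a set. Functoriality of $\rtimes_\E$ for equivariant \Star{}homomorphisms follows since any $\phi\colon (A,\alpha)\to(B,\beta)$ sends $I_\mu(A)$ into $I_\mu(B)$ for each $\mu$, hence $I_\E(A)$ into $I_\E(B)$.

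The technical heart of the proof is the lemma that this infimum is itself exact and Morita compatible. Morita compatibility should be relatively straightforward: the canonical isomorphism $(A\otimes\K_G)\rtimes_{\alpha\otimes\Lambda,\un}G\cong(A\rtimes_{\alpha,\un}G)\otimes\K_G$ descends to each $\rtimes_\mu$ in the family, and one verifies that the sum-of-ideals construction respects the stabilisation $(-)\otimes\K_G$, so the isomorphism descends to $\rtimes_\E$. Exactness is the subtler property and the principal obstacle. Given a short exact sequence $0\to I\to A\to B\to 0$, one needs exactness in the middle of $0\to I\rtimes_\E G\to A\rtimes_\E G\to B\rtimes_\E G\to 0$; right exactness and the chain-complex condition are automatic from functoriality, but injectivity of $I\rtimes_\E G\into A\rtimes_\E G$ and the equality of kernel with image can plausibly be destroyed by enlarging the ideal $I_\E$ over its constituents $I_\mu$. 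The resolution uses Kirchberg's trick, which produces from any family of exact functors a common exact lower bound via a careful ideal-theoretic construction that forces exactness sequence-by-sequence while preserving Morita compatibility; I would follow the detailed construction in~\cite{Baum-Guentner-Willett} here rather than reproducing it.

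Once $\rtimes_\E$ is in hand, the reformulation of Baum-Connes follows formally: since the original assembly map factors through $K_*$ of the universal crossed product, composing $\as^\un_{(G,A)}\colon K_*^{\top}(G;A)\to K_*(A\rtimes_{\alpha,\un}G)$ with the $K$-theory map induced by the quotient $A\rtimes_{\alpha,\un}G\onto A\rtimes_{\alpha,\E}G$ yields the assembly map $\as^\E_{(G,A)}$ of~\eqref{eq-assembly}. The claim that the Higson-Lafforgue-Skandalis counterexamples~\cite{Higson-Lafforgue-Skandalis} become confirming examples requires inspecting those constructions: the failure of the original conjecture there is driven by nonexactness of $\rtimes_\red$, which is precisely the defect $\rtimes_\E$ is designed to remove, so the obstruction disappears. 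The final assertion -- no known counterexamples at the time of writing -- is empirical rather than a formal theorem and requires no proof beyond the current state of the literature.
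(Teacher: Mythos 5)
Your proposal is correct and takes essentially the same route as the paper, which states this as Baum--Guentner--Willett's theorem and offers no proof beyond the citation: defining $\rtimes_\E$ as the infimum of the nonempty family of exact Morita compatible functors (nonempty because $\rtimes_\un$ qualifies), realised by summing the kernel ideals inside the fixed \cstar{}algebra $A\rtimes_{\alpha,\un}G$, with exactness of the infimum (Kirchberg's observation) as the crux, is exactly the construction of \cite{Baum-Guentner-Willett}. One small caution: Kirchberg's argument shows that the greatest lower bound itself is already exact and Morita compatible, so that $\rtimes_\E$ is genuinely the minimum of the family, rather than producing some further, strictly smaller ``common exact lower bound'' as your wording might suggest.
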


The minimal crossed product is natural to consider here as it is in some sense closest to the reduced crossed product, and as it does not change the conjecture for exact groups.  It also has the advantage that it does not suffer from the property (T) obstructions to the version of the Baum-Connes conjecture defined using the universal crossed product \cite[Corollary 5.7]{Baum-Guentner-Willett}.

Using the results of Section \ref{cp props sec}, one can prove an analogue of this result in the setting of correspondence functors \cite[Section 8]{Buss-Echterhoff-Willett}.

\begin{theorem}\label{min cor}
There is a minimal exact correspondence functor $\rtimes_{\E_\Cor}$ with respect to the order above.

Moreover, this functor agrees with the minimal exact Morita compatible functor on the category of separable $G$-\cstar{}algebras and equivariant \Star{}homomorphisms.
\end{theorem}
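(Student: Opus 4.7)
My strategy is to adapt the construction of $\rtimes_\E$ from Theorem \ref{min ex} to the subclass of correspondence functors. This requires two things: existence of a minimum among exact correspondence functors, and coincidence of that minimum with $\rtimes_\E$ on separable $G$-\cstar{}algebras. The first task splits further into (a) building a candidate functor by an infimum/``join'' construction, (b) checking it is again a correspondence functor, and (c) checking exactness is preserved.

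\textbf{Construction of the minimum.} After resolving the proper-class issue by passing to a set of representatives (noting that a crossed-product functor is determined by its values on $G$-algebras of bounded cardinality, as in the proof of Theorem \ref{min ex}), I would define $\rtimes_{\E_\Cor}$ as the join of all exact correspondence functors: on each $G$-algebra $A$, let $A\rtimes_{\E_\Cor}G$ be the quotient of $A\rtimes_\un G$ by the sum of the kernel ideals of the canonical surjections $A\rtimes_\un G\onto A\rtimes_{\alpha,\mu}G$ as $\rtimes_\mu$ ranges over the chosen set of exact correspondence functors. Functoriality is inherited from the defining family. To see that $\rtimes_{\E_\Cor}$ is still a correspondence functor, I would invoke Theorem \ref{corr props}(4) and verify the cp map property: an equivariant cp map $\phi\colon A\to B$ lifts through each member $\rtimes_\mu$ of the family to a cp map between the corresponding crossed products, and these maps are compatible with the quotients onto $\rtimes_{\E_\Cor}$, so complete positivity descends to a cp extension on $A\rtimes_{\E_\Cor}G\to B\rtimes_{\E_\Cor}G$.

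\textbf{Exactness of the infimum.} This is the main technical obstacle. The plan is to reformulate exactness of $\rtimes_\mu$ as the norm-theoretic identity that for every equivariant extension $0\to I\to A\to B\to 0$, the $\mu$-norm on $C_c(G,B)$ agrees with the quotient of the $\mu$-norm on $C_c(G,A)$ by $C_c(G,I)$. This is essentially the formulation used by Baum, Guentner, and Willett for Theorem \ref{min ex}, and its virtue is that it is stable under taking the join-of-ideals construction by a standard interchange-of-infima argument; hence exactness at the middle term descends to $\rtimes_{\E_\Cor}$. Injectivity of $I\rtimes_{\E_\Cor}G\into A\rtimes_{\E_\Cor}G$ is then automatic from the ideal property (Proposition \ref{id props}), which holds because $\rtimes_{\E_\Cor}$ has already been shown to be a correspondence functor.

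\textbf{Coincidence with $\rtimes_\E$ on separable algebras.} Since every correspondence functor is strongly Morita compatible (Theorem \ref{mor props}), every exact correspondence functor is in particular exact and Morita compatible, whence $\rtimes_\E\leq \rtimes_{\E_\Cor}$ in general by minimality of $\rtimes_\E$. For the reverse direction on separable systems, I would prove that the restriction of $\rtimes_\E$ to separable $G$-\cstar{}algebras is itself a correspondence functor; minimality of $\rtimes_{\E_\Cor}$ among exact correspondence functors then forces $\rtimes_{\E_\Cor}\leq \rtimes_\E$ in the separable setting. The concrete route would be to verify the projection property of Theorem \ref{corr props}(2) via a Kasparov-type stabilisation argument available for separable algebras: realise a $G$-invariant projection $p\in \M(A)$ as a \emph{full} projection inside $\M(A\otimes \K_G)$ so that the full projection property of $\rtimes_\E$ (automatic from Morita compatibility and Theorem \ref{mor props}) implies the non-full version, yielding the correspondence functor property for $\rtimes_\E$ on separable inputs.
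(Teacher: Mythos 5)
Your first half is essentially the route taken in \cite[Section 8]{Buss-Echterhoff-Willett}: the infimum is realised as the quotient of $A\rtimes_{\un}G$ by the closed ideal generated by all the kernel ideals, and the correspondence property of the infimum is checked via the cp map characterisation of Theorem \ref{corr props} (the cp extensions for the various $\mu$ all restrict to $f\mapsto\phi\circ f$ on $C_c(G,A)$, hence induce a single cp map on $A\rtimes_{\un}G$ that kills every kernel ideal and so descends). One caution: the norm of the infimum functor is the quotient norm by the closure of the sum of the kernel ideals, not the pointwise infimum of the $\mu$-norms, so a literal ``interchange of infima'' is not available; the clean argument is at the level of ideals (exactness in the middle for $\rtimes_\mu$ is equivalent to $q(I_{\mu,A})=I_{\mu,B}$ for the kernel ideals, and this identity is preserved under closed sums). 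That is repairable.

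The genuine gap is in the separable coincidence, in two places. First, your reduction of the projection property to the full projection property by ``realising $p$ as a full projection inside $\M(A\otimes\K_G)$'' cannot work: the ideal generated by the corresponding corner of $A\otimes\K_G$ is $\overline{ApA}\otimes\K_G$, so a non-full $G$-invariant projection never becomes full after stabilisation. The correct reduction is the one built into Theorem \ref{corr props}(5): $p$ \emph{is} full in $\M(\overline{ApA})$, so one uses the full projection property for the ideal $J=\overline{ApA}$ and then the ideal property (which $\rtimes_\E$ has, being exact) for the inclusion $J\rtimes_\E G\into A\rtimes_\E G$. Second, and more seriously, your parenthetical claim that the full projection property of $\rtimes_\E$ is ``automatic from Morita compatibility and Theorem \ref{mor props}'' conflates Morita compatibility in the sense of Definition \ref{mor com} with \emph{strong} Morita compatibility in the sense of Definition \ref{cp props}(3); Theorem \ref{mor props} takes the latter as input. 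Deducing the full projection property from the single stabilisation isomorphism $(A\otimes\K_G)\rtimes_{\alpha\otimes\Lambda,\E}G\cong(A\rtimes_{\alpha,\E}G)\otimes\K_G$ is precisely where separability is used and where the real work lies: for separable $A$ and a full $G$-invariant projection $p\in\M(A)$, the $G$-Hilbert module $pA\otimes L^2(G)\otimes\ell^2$ is full and countably generated, so Kasparov's equivariant stabilisation theorem identifies $pAp\otimes\K_G$ with $A\otimes\K_G$ compatibly with the corner embedding, and Morita compatibility transports this identification through $\rtimes_\E$ to give injectivity of $pAp\rtimes_\E G\to A\rtimes_\E G$. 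Without this step there is no reason the minimal exact Morita compatible functor should be strongly Morita compatible, and the separability hypothesis in the statement exists exactly because this implication is only known for separable ($\sigma$-unital) algebras.
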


Combined with the results of the previous section, this shows that one can also use the full power of $KK^G$ theory to study the reformulated Baum-Connes conjecture.

\section{Restriction, extension, and induction to and from subgroups}\label{sec:indres}
Suppose that $H$ is a closed subgroup of $G$. In this section we want to study relations between crossed-product
functors on $G$ and crossed-product functors on $H$. We shall define in particular a restriction and extension
process between functors for $G$ and functors for $H$.

We start with the restriction process: Suppose that $\rtimes_\mu$ is a crossed-product functor for $G$
and suppose that $(A,\alpha)$ is an $H$-algebra. Consider the induced $G$\nb-algebra $(\Ind_H^G(A,\alpha), \Ind\alpha)$
in which
$$\Ind_H^G(A,\alpha):=\left\{F\in C_b(G,A): \begin{matrix} \alpha_h(F(sh))=F(s)\;\forall s\in G, h\in H,\\
\text{and}\; (sH\mapsto \|F(s)\|)\in C_0(G/H)\end{matrix}\right\}.$$
The $G$\nb-action on $\Ind_H^G(A,\alpha)$ is given by $\big(\Ind\alpha_s(F)\big)(t)=F(s^{-1}t)$.
Now recall Green's imprimitivity theorem (see \cite[{Theorem B2}]{Echterhoff-Kaliszewski-Quigg-Raeburn:Categorical} or
\cite{Williams:Crossed}), which provides a
natural equivalence bimodule $X(A,\alpha)$ between $\Ind_H^G(A,\alpha)\rtimes_{\Ind\alpha,\un}G$ and $A\rtimes_{\alpha,\un}H$.
Let
$$I_{\Ind\alpha,\mu}: =\ker\Big(\Ind_H^G(A,\alpha)\rtimes_{\Ind\alpha,\un}G\to \Ind_H^G(A,\alpha)\rtimes_{\Ind\alpha,\mu}G\Big).$$
By the Rieffel correspondence between ideals in
$\Ind_H^G(A,\alpha)\rtimes_{\Ind\alpha,\un}G$ and ideals in $A\rtimes_{\alpha,\un}H$ there is a unique ideal $I_{\alpha,\mu|_H}\subseteq A\rtimes_{\alpha,\un}H$ such that
$X(A,\alpha)$ factors through an equivalence bimodule
$X_\mu(A,\alpha)$ between $\Ind_H^G(A,\alpha)\rtimes_{\Ind\alpha,\mu}G$ and the quotient
\begin{equation}\label{eq-mures}
A\rtimes_{\alpha,\mu|_H}H:=(A\rtimes_{\alpha,\un}H)/I_{\alpha,\mu|_H}.
\end{equation}

\begin{definition}\label{def-mures}
Let $\rtimes_{\mu}$ be a crossed-product functor for $G$.
Then the assignment $(A,\alpha)\mapsto A\rtimes_{\alpha,\mu|_H}H$, with $ A\rtimes_{\alpha,\mu|_H}H$ constructed as
above, is called the {\em restriction} of $\rtimes_\mu$ to $H$.
\end{definition}

\begin{remark}\label{rem-Green}
By the definition of the restricted crossed-product functor the following version of
Green's imprimitivity theorem holds automatically: If $\rtimes_{\mu}$ is a $G$\nb-crossed-product functor and if $\rtimes_{\mu|_H}$ denotes
 its restriction to the closed subgroup $H$ of $G$, then Green's bimodule $X(A,\alpha)$ factors through
an $\Ind_H^G(A,\alpha)\rtimes_{\Ind\alpha,\mu}G-A\rtimes_{\alpha,\mu|_H}H$ equivalence bimodule.
Moreover, this determines the restriction $\rtimes_{\mu|_H}$ and since Green's imprimitivity theorem holds for both full and reduced norms (see \cite[Appendix B]{Echterhoff-Kaliszewski-Quigg-Raeburn:Categorical} for a detailed discussion and references), it follows that $\rtimes_{\un|_H}=\rtimes_\un$ and $\rtimes_{\red|_H}=\rtimes_\red$.
More generally, it follows from \cite[{Theorem~5.12}]{Buss-Echterhoff:Imprimitivity} that if  $\mu=\mu_E$ is
 a KLQ-crossed-product functor for $G$ corresponding to a $G$-invariant  ideal $E\subseteq B(G)$,
 then the restriction $\mu|_H$ is the KLQ-functor for $H$ which corresponds to the $H$-invariant
 ideal $E_H$ of $B(H)$ which is generated by $E|_H=\{f|_H: f\in E\}$.
\end{remark}

\begin{theorem}\label{thm-mures}
Let $\rtimes_{\mu}$ be a crossed-product functor for $G$ and let $\rtimes_{\mu|_H}$ be its restriction to $H$.
Then the following are true:
\begin{enumerate}
\item $(A,\alpha)\mapsto A\rtimes_{\alpha,\mu|_H}H$ is a crossed-product functor for $H$.
\item If $\rtimes_\mu$ has the ideal property, the same holds for $\rtimes_{\mu|_H}$.
\item If $\rtimes_\mu$ is a correspondence functor, the same holds for $\rtimes_{\mu|_H}$.
\item If $\rtimes_\mu$ is exact, the same holds for $\rtimes_{\mu|_H}$.
\end{enumerate}
\end{theorem}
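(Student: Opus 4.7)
The unifying strategy is that every statement reduces via a two-step dance to a corresponding statement for $\rtimes_\mu$ on $G$: first, induce the relevant $H$-algebraic data (algebras, ideals, projections, exact sequences) from $H$ to $G$, apply the hypothesis on $\rtimes_\mu$, and then transport the resulting conclusion back to $H$-crossed products through the Rieffel correspondence determined by Green's imprimitivity bimodule $X(A,\alpha)$, which by definition intertwines ideals of $\Ind_H^G A\rtimes_{\Ind\alpha,\un}G$ with ideals of $A\rtimes_{\alpha,\un}H$.

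For (1), to verify that $\|\cdot\|_{\mu|_H}$ dominates the reduced $H$-norm, I would use $\rtimes_\mu\geq\rtimes_\red$ on the $G$-side, so the quotient map to $\Ind A\rtimes_\red G$ factors through $\Ind A\rtimes_\mu G$. Transferring along $X(A,\alpha)$ and invoking $\rtimes_{\red|_H}=\rtimes_\red$ from Remark~\ref{rem-Green} gives $\|\cdot\|_\un\geq\|\cdot\|_{\mu|_H}\geq\|\cdot\|_\red$. Functoriality is obtained by observing that an $H$-equivariant \Star{}homomorphism $\phi\colon(A,\alpha)\to(B,\beta)$ induces a $G$-equivariant $\Ind\phi\colon\Ind A\to\Ind B$; by $G$-functoriality of $\rtimes_\mu$ this yields $\Ind\phi\rtimes_\mu G$, and naturality of Green's bimodule in its coefficient algebra forces $\phi\rtimes_\un H$ to send $I_{\alpha,\mu|_H}$ into $I_{\beta,\mu|_H}$, producing the desired descent.

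For (2), an $H$-invariant ideal $I\subseteq A$ gives a $G$-invariant ideal $\Ind I\subseteq \Ind A$, and since $X(I,\alpha|_I)$ is a sub-bimodule of $X(A,\alpha)$ compatibly with the passage to $\mu$-quotients, the ideal property for $\rtimes_\mu$ transports to $\rtimes_{\mu|_H}$. For (3), by Theorem~\ref{corr props} it suffices to verify the projection property: an $H$-invariant projection $p\in\M(A)$ defines a $G$-invariant projection $\tilde p\in \M(\Ind A)$ via the constant function $s\mapsto p$, and one checks the canonical identification $\tilde p(\Ind A)\tilde p=\Ind(pAp)$; the projection property of $\rtimes_\mu$ applied to $\tilde p$ then descends, as in (2), to the inclusion $pAp\rtimes_{\mu|_H}H\hookrightarrow A\rtimes_{\mu|_H}H$. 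For (4), induction is exact, so a short exact sequence $0\to I\to A\to B\to 0$ of $H$-algebras gives $0\to \Ind I\to\Ind A\to\Ind B\to 0$; applying exactness of $\rtimes_\mu$ and transporting via the Rieffel correspondence attached to the pair $X(A,\alpha)$, $X(B,\beta)$ yields exactness of the $\mu|_H$-sequence for $H$.

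The main obstacles are the compatibility verifications: that the Rieffel correspondence is natural with respect to equivariant \Star{}homomorphisms of coefficient algebras, ideal inclusions, hereditary corners, and quotients, in a way that simultaneously converts each of the four $G$-side statements into the corresponding $H$-side statement. These facts are implicit in the functorial formulation of Green's imprimitivity theorem but require care because the bimodule $X(A,\alpha)$ is constructed from completions of $C_c(G,A)$; once the naturality of $X$ is pinned down (most concretely in the form $X(I,\alpha|_I)\subseteq X(A,\alpha)$ and $X(pAp,\alpha)= X(A,\alpha)\cdot(\iota_A(p))$, together with morphism functoriality), the rest is bookkeeping.
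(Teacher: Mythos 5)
Your proposal is correct and follows essentially the same route as the paper: induce the $H$\nb-algebraic data to $G$, apply the hypothesis on $\rtimes_\mu$, and transport back through the Rieffel correspondence for Green's bimodule, with the naturality of $X(A,\alpha)$ in the coefficient algebra (the commuting squares in $\Cor$ cited from Echterhoff--Kaliszewski--Quigg--Raeburn) doing the real work. The only cosmetic difference is that in (3) you verify the projection property via $\tilde p(\Ind_H^G A)\tilde p=\Ind_H^G(pAp)$ whereas the paper verifies the hereditary subalgebra property via $\Ind_H^G(B,\alpha)\subseteq\Ind_H^G(A,\alpha)$; both are equivalent characterisations by Theorem~\ref{corr props} and the arguments are parallel.
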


\begin{remark}\label{rem-pull-back}
Before we give the proof of the theorem, we need to say some words about the connection of
the composition $\psi\circ \phi$ of two \Star{}homomorphisms $\phi:A\to B$ and $\psi:B\to C$
 and composition in the correspondence category $\Cor$.

For this observe that $\phi$ and $\psi$ can be represented by the (nondegenerate) $A-B$ and $B-C$ correspondences
$(\phi(A)B, \phi)$ and $(\psi(B)C,\psi)$, respectively. The assignment sending \Star{}homomorphisms to
the equivalence classes of these correspondences is functorial, so that the composition $\psi\circ\phi$ is represented
by the correspondence $(\phi(A)B\otimes_{B}\psi(B)C, \phi\otimes 1)$.
\end{remark}

\begin{proof} Suppose that $\phi:A\to B$ is a \Star{}homomorphism. In order to show that
it induces a \Star{}homomorphism $\phi\rtimes_{\mu|_H}H: A\rtimes_{\alpha,\mu|_H}H\to B\rtimes_{\beta,\mu|_H}H$
we need to show that the composition of the quotient map $q_{B,\mu|_H}: B\rtimes_{\beta,\un}H\to B\rtimes_{\beta,\mu|_H}H$
with the descent $\phi\rtimes_\un H: A\rtimes_{\alpha,\un}H\to B\rtimes_{\beta,\un}H$
factors through $A\rtimes_{\alpha,\mu|_H}H$.

For this we consider the following diagram in the correspondence category $\Cor$:
$$
\begin{CD}
A\rtimes_{\alpha,\un} H @> X(A,\alpha)^* >> \Ind_H^G(A,\alpha)\rtimes_{\Ind\alpha,\un}G\\
@V \phi\rtimes_{\un}H VV                         @VV \Ind_H^G\phi\rtimes_{\un} G V\\
B\rtimes_{\beta,\un}H @> X(B,\beta)^* >>  \Ind_H^G(B,\beta)\rtimes_{\Ind\beta,\un}G\\
@V q_{\mu|_H} VV   @VV q_\mu V\\
B\rtimes_{\beta,\mu|_H}H @> X_\mu(B,\beta)^* >>  \Ind_H^G(B,\beta)\rtimes_{\Ind\beta,\mu}G
\end{CD}
$$
It is shown in \cite[Chapter 4]{Echterhoff-Kaliszewski-Quigg-Raeburn:Categorical} that the upper square of this diagram commutes
(in \cite{Echterhoff-Kaliszewski-Quigg-Raeburn:Categorical} only reduced crossed products are considered,
but the same arguments work for full crossed products as well).
It follows from the definition of $\rtimes_{\mu|_H}$ that the lower square commutes.
Hence the outer square commutes as well. This implies that the kernel $J_{\Ind\phi\rtimes_\mu G}$
of the composition of the
right vertical arrows in the diagram corresponds to the kernel $J_{\phi\rtimes_{\mu|_H}H}$ of the
composition of the left vertical arrows via the Rieffel correspondence induced from $X(A,\alpha)$.
By assumption we have $J_{\Ind\phi\rtimes_\mu G}\subseteq I_{\Ind\alpha,\mu}$. Since
the Rieffel correspondence preserves inclusions, it follows that
$J_{\phi\rtimes_{\mu|_H}H}\subseteq I_{\alpha, \mu|_H}$, hence the left vertical arrows factor
through $A\rtimes_{\alpha,\mu|_H}H$. This proves (1).

Assume now that $I$ is an $H$-invariant ideal of $A$ with inclusion map $\iota:I\to A$
and let $Q:A\to A/I$ denote the quotient map. It is well known (and trivial to check) that the
corresponding sequence
$$0\longrightarrow \Ind_H^G(I,\alpha)\stackrel{\Ind\iota}{\longrightarrow}\Ind_H^G(A,\alpha)
\stackrel{\Ind Q}{\longrightarrow} \Ind_H^G(A/I,\alpha)\longrightarrow 0$$
is a short exact sequence of $G$\nb-algebras.
We then get the following commutative diagram in $\Cor$:
$$
\begin{CD}
I\rtimes_{\alpha,\mu|_H} H @> X_\mu(I,\alpha)^* >> \Ind_H^G(I,\alpha)\rtimes_{\Ind\alpha,\mu}G\\
@V \iota\rtimes_{\mu|_H}H VV                         @VV  \Ind_H^G\iota\rtimes_{\mu} G V\\
A\rtimes_{\alpha,\mu|_H}H @> X_\mu(A,\beta)^* >>  \Ind_H^G(A,\alpha)\rtimes_{\Ind\alpha,\mu}G\\
@V Q\rtimes_{\mu|_H}H VV                         @VV \Ind_H^GQ\rtimes_{\mu} G V\\
(A/I)\rtimes_{\alpha,\mu|_H}H @> X_\mu(A/I,\beta)^* >>  \Ind_H^G(A/I,\alpha)\rtimes_{\Ind\alpha,\mu}G
\end{CD}
$$
If $\rtimes_\mu$ has the ideal property, $\Ind_H^G\iota\rtimes_{{\mu}}G$ is injective. It follows from the commutativity of the upper square that ${\iota}\rtimes_{\mu|_H}H$ is injective as well. Hence
$\rtimes_{\mu|_H}$ also satisfies the ideal property. If $\rtimes_\mu$ is exact, then the
commutativity of the  whole diagram implies that $\rtimes_{\mu|_H}$ is exact.
This proves (2) and (4).

Finally, the commutativity of the diagram
$$
\begin{CD}
B\rtimes_{\alpha,\mu|_H} H @> X_\mu(I,\alpha)^* >> \Ind_H^G(B,\alpha)\rtimes_{\Ind\alpha,\mu}G\\
@V \iota\rtimes_{\mu|_H}H VV                         @VV  \Ind_H^G\iota\rtimes_{\mu} G V\\
A\rtimes_{\alpha,\mu|_H}H @> X_\mu(A,\beta)^* >>  \Ind_H^G(A,\alpha)\rtimes_{\Ind\alpha,\mu}G
\end{CD}
$$
where $B$ is an $H$-invariant hereditary subalgebra of $A$ -- which makes $\Ind_H^G(B,\alpha)$ a $G$\nb-invariant
hereditary subalgebra of $\Ind_H^G(A,\alpha)$ -- and where $\iota:B\to A$ denotes the inclusion map,
implies that the hereditary subalgebra property passes from $\rtimes_\mu$ to $\rtimes_{\mu|_H}$.
By Theorem~4.9 in \cite{Buss-Echterhoff-Willett} this implies (3) and finishes the proof.
\end{proof}

As a direct consequence of the above result, we get the following well-known, but non-trivial result (the original
proof by Kirchberg and Wassermann in \cite{Kirchberg-Wassermann:Permanence} uses similar ideas as used in the above
theorem):

\begin{corollary}
Every closed subgroup of an exact group is also exact.
\end{corollary}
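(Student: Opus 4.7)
The proof is a one-line application of the machinery just established. The plan is to unwind the definition of exactness of a locally compact group and to apply Theorem~\ref{thm-mures}(4) together with Remark~\ref{rem-Green}.

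First I would recall that by definition a locally compact group $G$ is exact (in the sense of Kirchberg-Wassermann) precisely when the reduced crossed-product functor $(A,\alpha)\mapsto A\rtimes_{\alpha,\red}G$ is exact in the sense of Definition~\ref{exact}. So for a closed subgroup $H\subseteq G$, to show $H$ is exact it suffices to show that the reduced crossed-product functor of $H$ is exact.

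Next I would apply the restriction construction from Definition~\ref{def-mures} to the functor $\rtimes_\red$ on $G$. By the last sentence of Remark~\ref{rem-Green}, the restriction of the reduced functor on $G$ to $H$ is again the reduced functor, i.e.\ $\rtimes_{\red|_H}=\rtimes_\red$ on the category of $H$\nobreakdash-\Cst{}algebras. On the other hand, Theorem~\ref{thm-mures}(4) asserts that if the functor $\rtimes_\mu$ on $G$ is exact, then its restriction $\rtimes_{\mu|_H}$ on $H$ is exact as well. Combining these two facts, exactness of $G$ gives exactness of $\rtimes_\red$ on $H$, which by the definition recalled in the first step is exactness of $H$.

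There is essentially no obstacle here; the real content has already been absorbed into Theorem~\ref{thm-mures}(4) (whose proof rests on the fact that Green's imprimitivity bimodule intertwines the restricted crossed product on $H$ with an induced $G$-crossed product, so that short exact sequences on the $H$-side correspond under the Rieffel bijection to short exact sequences of induced $G$-algebras) and into the compatibility $\rtimes_{\red|_H}=\rtimes_\red$ stated in Remark~\ref{rem-Green}. Hence the proof of the corollary amounts to assembling these two ingredients.
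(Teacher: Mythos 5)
Your proof is correct and follows essentially the same route as the paper: the paper likewise combines Theorem~\ref{thm-mures}(4) with the identity $\rtimes_{\red|_H}=\rtimes_\red$ from Remark~\ref{rem-Green}, merely phrasing exactness of $G$ as the statement that the minimal exact correspondence functor coincides with $\rtimes_\red^G$. No gaps.
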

\begin{proof}
A locally compact group $G$ is exact if and only if the minimal exact correspondence functor $\rtimes_{\E^G_\Cor}$
equals the reduced $G$-crossed-product functor $\rtimes_\red^G$. But this implies that if $H\sbe G$ is a
closed subgroup, then $\rtimes_\red^G|_H=\rtimes_\red^H$ is exact by Theorem~\ref{thm-mures}(4).
\end{proof}

The restriction of a crossed-product functor to a subgroup also has good properties
with respect to the Baum-Connes assembly map:

\begin{proposition}\label{prop-BCrestrict}
Suppose that $\rtimes_{\mu}$ is a crossed-product functor
for a second countable locally compact group $G$ and let $H$ be a closed subgroup of $G$.
Let $(A,\alpha)$ be any separable $H$-algebra. Then
the following are equivalent:
\begin{enumerate}
\item The assembly map $\as^{\mu|_H}_{(A,H)}: K_*^{\top}(H,A)\to K_*(A\rtimes_{\alpha,\mu|_H}H)$ is an isomorphism.
\item the assembly map $\as^{\mu}_{(\Ind_H^GA,G)}: K_*^{\top}(G, \Ind_H^GA)\to K_*(\Ind_H^GA\rtimes_{\Ind\alpha,\mu}G)$
is an isomorphism.
\end{enumerate}
In particular, if $G$ satisfies BC for all $\rtimes_{\mu}$-crossed products, then $H$ satisfies
BC for all $\rtimes_{\mu|_H}$-crossed products.
\end{proposition}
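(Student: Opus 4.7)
The plan is to combine two ingredients: the Chabert--Echterhoff induction isomorphism in topological $K$\nb-theory
$$\Ind_H^G \colon K_*^{\top}(H,A) \xrightarrow{\ \sim\ } K_*^{\top}(G, \Ind_H^G A)$$
(valid for any closed subgroup $H$ of a second countable group $G$ and any separable $H$\nb-algebra $A$, see \cite{Chabert-Echterhoff:Permanence}), and the $K$\nb-theory isomorphism
$$M_\mu \defeq [X_\mu(A,\alpha)] \colon K_*(A\rtimes_{\alpha,\mu|_H}H) \xrightarrow{\ \sim\ } K_*(\Ind_H^G A \rtimes_{\Ind\alpha,\mu} G)$$
induced by the Green--Rieffel equivalence bimodule $X_\mu(A,\alpha)$ from Remark~\ref{rem-Green}. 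Both maps are honest isomorphisms, so it suffices to establish that the square
$$
\begin{CD}
K_*^{\top}(H, A) @>{\as^{\mu|_H}_{(A,H)}}>> K_*(A \rtimes_{\alpha,\mu|_H} H) \\
@V{\Ind_H^G}V{\cong}V @V{\cong}V{M_\mu}V \\
K_*^{\top}(G, \Ind_H^G A) @>{\as^\mu_{(\Ind_H^G A,G)}}>> K_*(\Ind_H^G A \rtimes_{\Ind\alpha,\mu} G)
\end{CD}
$$
commutes; then one horizontal arrow is an isomorphism if and only if the other is.

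To prove commutativity, I would first establish it for the universal crossed product $\rtimes_\un$, and then pass to $\rtimes_\mu$ by naturality of the canonical quotients. The $\rtimes_\un$ case is essentially classical: it is precisely the compatibility of the Baum--Connes assembly map with Green's imprimitivity theorem and induction, and can be extracted from \cite{Chabert-Echterhoff:Permanence} (or verified directly by unwrapping the definition of the assembly map via Kasparov's descent together with the Morita equivalence given by $X(A,\alpha)$). Given this, one obtains the commutative diagram
$$
\begin{CD}
K_*^{\top}(H, A) @>{\as^{\un}_{(A,H)}}>> K_*(A \rtimes_{\alpha,\un} H) @>{q_{\mu|_H,*}}>> K_*(A\rtimes_{\alpha,\mu|_H}H)\\
@V{\Ind_H^G}VV @V{[X(A,\alpha)]}VV @V{M_\mu}VV\\
K_*^{\top}(G, \Ind_H^G A) @>{\as^{\un}_{(\Ind_H^G A,G)}}>> K_*(\Ind_H^G A\rtimes_{\Ind\alpha,\un}G) @>{q_{\mu,*}}>> K_*(\Ind_H^G A\rtimes_{\Ind\alpha,\mu}G)
\end{CD}
$$
The right-hand square commutes because, by construction of the restricted functor (see~\eqref{eq-mures} and Remark~\ref{rem-Green}), the bimodule $X(A,\alpha)$ factors through $X_\mu(A,\alpha)$ under the quotients $q_{\mu|_H}$ and $q_\mu$; the left-hand square is the classical statement. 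Since $\as^\mu_{(\cdot)} = q_{\mu,*}\circ \as^{\un}_{(\cdot)}$ by the definition of the assembly map for arbitrary crossed-product functors (recalled around~\eqref{eq-assembly}), pasting the two squares yields the desired commutativity.

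The final assertion ("In particular \ldots") is immediate: if $\as^\mu_{(B,G)}$ is an isomorphism for every separable $G$\nb-algebra $B$, apply this to $B=\Ind_H^G A$ to obtain an isomorphism $\as^{\mu|_H}_{(A,H)}$ for every separable $H$\nb-algebra $A$. The main technical obstacle is the compatibility of the assembly map with Green's imprimitivity bimodule at the level of the universal crossed product; this is the step where one genuinely uses that the bimodule $X(A,\alpha)$ intertwines the Kasparov descent of an element of $KK^H(\C,A)$ with that of its induction in $KK^G(\C,\Ind_H^G A)$. Once this is in hand, the passage from $\un$ to $\mu$ is purely formal because, by construction, the restricted functor $\rtimes_{\mu|_H}$ is defined precisely so that Green's bimodule descends.
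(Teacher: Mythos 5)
Your proposal is correct and follows essentially the same route as the paper: both rest on the Chabert--Echterhoff induction isomorphism in topological $K$-theory together with the compatibility of the assembly map with Green's imprimitivity bimodule, plus the observation that $X(A,\alpha)$ descends to $X_\mu(A,\alpha)$ by the very definition of the restricted functor. The only cosmetic difference is that you anchor the compatibility at the universal crossed product and push forward along the quotients, whereas the paper cites the reduced-case diagram of \cite{Chabert-Echterhoff:Permanence}*{Proposition~2.3} and notes that its proof adapts to $\rtimes_\mu$; both reductions leave the same (classical, but nontrivial) imprimitivity-compatibility step as the cited input.
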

\begin{proof} It is shown in \cite[Theorem 2.2]{Chabert-Echterhoff:Permanence} that there is an
isomorphism $\Ind_H^G: K_*^{\top}(H,A)\to K_*^{\top}(G; \Ind_H^GA)$ which by
\cite[Proposition 2.3]{Chabert-Echterhoff:Permanence} commutes with the assembly maps
for $H$ and $G$ in the sense that the following diagram commutes:
$$
 \begin{CD}
 K_*^{\top}(H,A) @>\as^{\red}>>  K_*(A\rtimes_{\alpha,\red}H)\\
 @V\Ind_H^G VV      @VV \sim_M V\\
 K_*^{\top}(G, \Ind_H^GA) @>>\as^{\red}> K_*(\Ind_H^GA\rtimes_{\Ind\alpha,\red}G),
 \end{CD}
$$
where the right vertical arrow is induced from Green's Morita equivalence.
But the arguments used in the proof of \cite[Proposition 2.3]{Chabert-Echterhoff:Permanence}
can easily be adapted to show that the following diagram
$$
 \begin{CD}
 K_*^{\top}(H,A) @>\as^{\mu|H}>>  K_*(A\rtimes_{\alpha,\mu|_H}H)\\
 @V\Ind_H^G VV      @VV \sim_M V\\
 K_*^{\top}(G, \Ind_H^GA) @>>\as^{\mu}> K_*(\Ind_H^GA\rtimes_{\Ind\alpha,\mu}G).
 \end{CD}
$$
commutes as well. This finishes the proof.
\end{proof}

In view of the above proposition it is interesting to study the question whether
the restriction of the minimal exact correspondence functor for $G$ to a closed subgroup $H$ will
always give the minimal exact correspondence functor for $H$, since this would then imply
that the reformulated conjecture passes to closed subgroups. We shall show in the following section that this
is indeed true whenever $H$ is normal in $G$, but we do not know the answer in general.

We are now going
 to construct crossed-product functors for $G$ out of crossed-product functors for a closed subgroup $H$.
There are actually (at least) two possibilities for doing this. We start with what we call the {\em extension}
of a crossed-product functor to $G$:

\begin{definition}\label{def-ext}
Suppose $H$ is a closed subgroup of $G$ and let $(B,\beta)\mapsto B\rtimes_{\beta,\nu} H$ be a crossed-product functor
for $H$. Then, if $(A,\alpha)$ is a $G$\nb-algebra, we define the crossed product $A\rtimes_{\alpha, \ext\nu}G$ as
$A\rtimes_{\alpha, \ext\nu}G:=(A\rtimes_{\alpha,\un}G)/J_\nu$ with
$$J_\nu=\cap\{\ker(\pi\rtimes u): (\pi,u) \in \Rep(A\rtimes_{\alpha,\un}G)\;\text{such that}\; \pi\rtimes u|_H \in \Rep(A\rtimes_{\alpha,\nu}H)\}.$$
We call $\rtimes_{\ext\nu}$ the {\em extension} of $\rtimes_\nu$ to $G$.
\end{definition}

In other words, $A\rtimes_{\ext\nu}G$ is the ``largest'' $G$-crossed product such that all
representations of $A\rtimes_{\ext\nu}G$ restrict to representations of $A\rtimes_\nu H$. To get a feeling
for it observe that the extension of the universal crossed-product functor on $H$ is the universal
crossed-product functor for $G$, but the extension of the reduced crossed-product functor on $H$
will rarely be the reduced crossed-product functor for $G$. In fact, if $H$ is amenable, it will always be
the universal one.

\begin{theorem}\label{thm-ext}
Let $\rtimes_{\nu}$ be a crossed-product functor for $H$ and let $\rtimes_{\ext\nu}$ be the
extension of $\rtimes_\nu$ to $G$.
Then the following are true:
\begin{enumerate}
\item $(A,\alpha)\mapsto A\rtimes_{\alpha,\ext\nu}H$ is a crossed-product functor for $G$.
\item If $\rtimes_\nu$ has the ideal property, the same holds for $\rtimes_{\ext\nu}$.
\item If $\rtimes_\nu$ is a correspondence functor, the same holds for $\rtimes_{\ext\nu}$.
\item If $\rtimes_\nu$ is exact, the same holds for $\rtimes_{\ext\nu}$.
\end{enumerate}
\end{theorem}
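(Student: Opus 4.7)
The plan is to work directly from the definition of $A\rtimes_{\alpha,\ext\nu}G$ as the largest quotient of $A\rtimes_{\alpha,\un}G$ all of whose covariant representations restrict on $H$ to ones factoring through $A\rtimes_{\alpha,\nu}H$. For each item, the strategy is to verify that the defining class of $G$-covariant pairs is closed under the relevant construction---pullback along \Star{}homomorphisms, extension of representations of ideals, Rieffel induction across corner projections, and passage to quotients---using the corresponding property of $\nu$ at the level of $H$ to control the restricted representations.

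For (1), the bound $\|\cdot\|_{\ext\nu}\le\|\cdot\|_\un$ is immediate. For $\|\cdot\|_{\ext\nu}\ge\|\cdot\|_\red$ I would exhibit the regular covariant pair of $(A,G)$ on $L^2(G,A)$ among the defining pairs: under a Borel identification $L^2(G,A)\cong L^2(H\backslash G,L^2(H,A))$, the restriction of this pair to $H$ becomes a direct integral over $H\backslash G$ of copies of the regular covariant pair of $(A,H)$, so it integrates through $A\rtimes_{\alpha,\red}H$, and hence through $A\rtimes_{\alpha,\nu}H$ since $\nu$ dominates the reduced norm. Functoriality is routine: a $G$-equivariant \Star{}homomorphism $\phi\colon A\to B$ pulls back $G$-covariant pairs on $B$ to $G$-covariant pairs on $A$, and the $H$-restrictions remain $\nu$-compatible by functoriality of $\nu$ applied to $\phi|_H$.

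For (2), a nondegenerate covariant pair $(\pi,u)$ of $(I,G)$ whose $H$-restriction factors through $I\rtimes_{\nu}H$ extends uniquely to a covariant pair $(\bar\pi,u)$ of $(A,G)$ by nondegenerate extension of $\pi$ from $I$ to $A$. Since $\nu$ has the ideal property, $I\rtimes_\nu H$ sits as an ideal inside $A\rtimes_\nu H$, and the nondegenerate extension of the representation of $I\rtimes_\nu H$ to $A\rtimes_\nu H$ agrees with the integrated form of $(\bar\pi,u)|_H$; hence the latter factors through $A\rtimes_\nu H$. Norms on $C_c(G,I)$ are preserved under this extension, so $\iota\rtimes_{\ext\nu}G$ is injective. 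For (3), by Theorem \ref{corr props} it suffices to establish the projection property. Given a $G$-invariant projection $p\in\M(A)$ and a covariant pair $(\pi,u)$ of $(pAp,G)$ on $\H$ whose $H$-restriction factors through $pAp\rtimes_\nu H$, I would Rieffel-induce along the $\overline{ApA}$-$pAp$ equivalence bimodule $Ap$ to a covariant pair of $(\overline{ApA},G)$ on $\tilde\H:=Ap\otimes_{pAp}\H$, then extend nondegenerately to a covariant pair $(\bar\pi,\bar u)$ of $(A,G)$. Compression to the $G$-invariant subspace $p\otimes\H\cong\H$ recovers $(\pi,u)$ and preserves norms on $C_c(G,pAp)$. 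For the $H$-restriction, the correspondence-functor property of $\nu$ descends the bimodule $Ap$ to a Morita equivalence between $pAp\rtimes_\nu H$ and $\overline{ApA}\rtimes_\nu H$, so the induced representation factors through $\overline{ApA}\rtimes_\nu H$, and the ideal property of $\nu$ extends it to $A\rtimes_\nu H$.

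For (4), I would combine (2) with the observation that exactness of $\nu$ yields $(A/I)\rtimes_\nu H\cong(A\rtimes_\nu H)/(I\rtimes_\nu H)$: a covariant pair of $(A,G)$ with $\pi(I)=0$ and $H$-restriction factoring through $A\rtimes_\nu H$ automatically factors through $(A/I)\rtimes_\nu H$, since its integrated form on $A\rtimes_\nu H$ vanishes on $I\rtimes_\nu H$. This sets up a bijection between $\ext\nu$-compatible representations of $(A,G)$ vanishing on $I$ and $\ext\nu$-compatible representations of $(A/I,G)$, identifying $I\rtimes_{\ext\nu}G$ with the kernel of $A\rtimes_{\ext\nu}G\to (A/I)\rtimes_{\ext\nu}G$. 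The main obstacle I anticipate is in (3): checking carefully that the Rieffel induction at the $G$-level is intertwined with the $\nu$-level Morita equivalence between $pAp\rtimes_\nu H$ and $\overline{ApA}\rtimes_\nu H$ provided by $\nu$ being a correspondence functor, which requires tracking the compatibility of several imprimitivity bimodules through the quotient $\rtimes_\un\onto\rtimes_\nu$ for $H$.
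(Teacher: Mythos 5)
Your proof is correct, and for items (1), (2) and (4) it runs along essentially the same lines as the paper's: recast each property as a closure property of the class of covariant pairs whose integrated forms factor through the relevant completion, and check that the class defining $\rtimes_{\ext\nu}$ (pairs $(\pi,u)$ with $(\pi,u|_H)$ factoring through $\rtimes_\nu$) is stable under pullback along equivariant \Star{}homomorphisms, nondegenerate extension from an ideal, and descent to quotients; you are in fact more complete than the paper on point (1), where you check the inequality $\|\cdot\|_{\ext\nu}\ge\|\cdot\|_\red$ by exhibiting the regular pair of $(A,G)$ in the defining class. The real divergence is in (3). The paper uses the characterisation of the projection property from \cite{Buss-Echterhoff-Willett}*{Corollary~8.6} in the \emph{compression} direction --- every covariant pair integrating to $A\rtimes_\mu L$ compresses to one integrating to $pAp\rtimes_\mu L$ --- so the verification for $\ext\nu$ is immediate: restrict to $H$, compress by the projection property of $\nu$, and note that compression commutes with restricting the unitary part. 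You argue in the opposite, \emph{induction}, direction: Rieffel-induce a pair on $pAp$ up along $Ap$ to $\overline{ApA}$ and then extend to $A$, which obliges you to intertwine the $G$-level induction with the $H$-level imprimitivity bimodule obtained by descending $Ap$ through $\rtimes_\nu$. That compatibility (the obstacle you flag) is genuine but provable --- it is an induction-in-stages statement, and the needed equivalence of $pAp\rtimes_\nu H$ with $\overline{ApA}\rtimes_\nu H$ is supplied by strong Morita compatibility of $\nu$ --- so your route closes, at the cost of exactly the bookkeeping the compression characterisation is designed to avoid. One small point common to both write-ups: the implication ``$(\pi,u)$ integrates to $A\rtimes_{\ext\nu}G$ implies $(\pi,u|_H)$ integrates to $A\rtimes_\nu H$'' is not literally the definition of $J_\nu$; it uses that the defining class is closed under weak containment (restriction to $H$ preserves weak containment and the class is closed under direct sums). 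Your organisation, which runs the norm estimates only over pairs already in the defining class, quietly sidesteps this.
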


\begin{proof}
In all four cases, we just show that the property of interest can be reformulated in terms of covariant pairs; having done this,
it is then straightforward to verify that if $\nu$ has the given property, then $\ext\nu$ also does.

For functoriality, let $\phi:A\to B$ be an $L$-equivariant \Star{}homomorphism for some group $L$.  Then for given quotients $A\rtimes_\mu L$ and $B\rtimes_\mu L$ of the universal crossed products, $\phi\rtimes_\un L:A\rtimes_\un L\to B\rtimes_\un L$ descends to a \Star{}homomorphism $A\rtimes_\mu L\to B\rtimes_\mu L$ if and only if for every covariant pair $(\pi,u)$ for $(B,L)$ that extends to $B\rtimes_{\mu} L$, the covariant pair $(\pi\circ \phi,u)$ extends to $A\rtimes_{\mu} L$.

For the ideal property, let $I$ be an $L$-invariant ideal in some $A$ and $\rtimes_\mu$ a crossed product functor for $L$.  For a nondegenerate representation $\pi$ of $I$, let $\widetilde{\pi}$ denote the canonical extension to $A$.  Note that $\rtimes_\mu$ has the ideal property if and only if $\widetilde{\pi}\rtimes u$ extends to $A\rtimes_{\mu}L$ whenever the covariant pair $(\pi,u)$ integrates to a representation of $A\rtimes_{\mu}L$.

To check the correspondence functor property, we work with the projection property.   Let $A$ be an $L$-algebra and $p$ an $L$-invariant projection in the multiplier algebra of $A$.  Let $\rtimes_\mu$ be a crossed product.  For a nondegenerate representation $\pi$ of $A$ on a Hilbert space $\mathcal{H}$, let $\pi|_p$ denote the restriction of $\pi$ to the corner $pAp$ acting on $\pi(p)\mathcal{H}$ (where we have also used $\pi$ for the canonical extension of $\pi$ to the multiplier algebra of $A$).  Recall from \cite[Corrollary 8.6]{Buss-Echterhoff-Willett} that the crossed product $\rtimes_{\mu}$ has the projection property if and only if for any such $A$ and $p$ and any covariant pair $(\pi,u)$ that integrates to $A\rtimes_{\mu} L$, the covariant pair $(\pi|_p,u)$ for $(pAp,L)$ integrates to $pAp\rtimes_{\mu}L$.

Finally, let
$$
0\to I \to A \to B \to 0
$$
be a short exact sequence of $L$-\cstar{}algebras and $\rtimes_\mu$ a crossed product. As we have already considered the ideal property, it remains to characterise exactness of the sequence
$$
0\to I\rtimes_{\mu}L \to A\rtimes_{\mu}L\to B\rtimes_{\mu}L \to 0
$$
at the middle term. For a representation $\pi$ of $A$ that contains $I$ in its kernel, let $\dot{\pi}$ denote the representation of $B$ canonically induced by $\pi$. Note that the sequence above is exact at the middle term precisely when for any representation $(\pi,u)$ that integrates to $A\rtimes_{\mu}L$ such that $\pi$ contains $I$ in the kernel, the representation $(\dot{\pi},u)$ integrates to $B\rtimes_{\mu}L$.

As a sample, we give the proof of (3) and leave the other assertions to the reader. For this let $p\in \M(A)$ be a $G$-invariant projection and let $(\pi,u)$ be a covariant representation of $(A, G,\alpha)$ that integrates to $A\rtimes_{\ext\nu}G$.
Then, by definition of $\rtimes_{\ext\nu}$,  $(\pi, u|_H)$ integrates to $A\rtimes_{\nu}H$.
The projection property for $\rtimes_\nu$ implies that $(\pi|_p, u|_H)$ integrates to $pAp\rtimes_\nu H$, which then implies that
$(\pi|_p, u)$ integrates to $pAp\rtimes_{\ext\nu}G$.
\end{proof}
We are now going to describe an alternative procedure to construct a crossed-product functor on $G$ from a functor
$\rtimes_\nu$ on a closed subgroup $H$ of $G$ which we call the {\em induced} crossed-product functor.
For this recall that if we start with a $G$\nb-algebra $(A,\alpha)$ and restrict the action to $H$, then the
induced algebra $\Ind_H^G(A,\alpha)$ is $G$\nb-isomorphic to $A\otimes C_0(G/H)$ equipped with the
diagonal action, where $G$ acts on $G/H$ by the left-translation action $\tau$ (e.g., see
\cite[Remark 6.1]{Echterhoff:Crossed}). Then Green's imprimitivity theorem
provides us with an equivalence bimodule $X(A,\alpha)$ between $A\rtimes_{\alpha,\un}H$ and
$(A\otimes C_0(G/H))\rtimes_{\alpha\otimes\tau, \un}G$. Given a crossed-product functor
$\rtimes_\nu$ for $H$ with corresponding ideal $I_{\alpha,\nu}\subseteq A\rtimes_{\alpha,\un}H$, we can
consider the ideal $I_{A\otimes C_0(G/H), \tilde\nu}$ in $(A\otimes C_0(G/H))\rtimes_{\alpha\otimes\tau, \un}G$
which corresponds to $I_{A,\nu}$ via the Rieffel correspondence for the
bimodule $X(A,\alpha)$. Let
$(A\otimes C_0(G/H))\rtimes_{\alpha\otimes \tau,\tilde\nu}G$ denote the quotient of the full crossed product by this ideal.
Let $j_A:A\to \M(A\otimes C_0(G/H))$ denote the canonical inclusion and let
$$q_{\tilde\nu}:(A\otimes C_0(G/H))\rtimes_{\alpha\otimes\tau, \un}G
\to (A\otimes C_0(G/H))\rtimes_{\alpha\otimes \tau,\tilde\nu}G$$
denote the quotient map. We then introduce the following:

\begin{definition}\label{def-induced-functor}
Let $H$ be a closed subgroup of $G$ and let $\rtimes_\nu$ be a crossed-product functor for $H$.
Then the {\em induced crossed-product functor} $\rtimes_{\Ind\nu}$ for $G$ is defined as
$$A\rtimes_{\alpha,\Ind\nu}G:=q_{\tilde\nu}\circ (j_A\rtimes_{\un} G)\big(A\rtimes_{\alpha,\un}G\big)
\subseteq \M\big((A\otimes C_0(G/H))\rtimes_{\alpha\otimes \tau,\tilde\nu}G\big).$$
\end{definition}

\begin{example}\label{exa:induced-functors}
The crossed-product functor induced from the reduced $H$-crossed product $\rtimes_\red^H$ is the reduced $G$-crossed product $\rtimes_\red^G$.
This is because Green's imprimitivity bimodule $X_H^G(A,\alpha)$ factors through the reduced norms
and the canonical homomorphism $A\rtimes_\red G\to \M((A\otimes \contz(G/H))\rtimes_\red G)$ is an embedding for every $G$-algebra $A$.

In particular, if $H$ is amenable (and in particular if $H$ is the trivial group), we always get the reduced $G$-crossed product functor by induction from the (unique) $H$-crossed product functor. This also means that induction from the universal norm does not always give the universal norm.
\end{example}

We have the following general properties for the induced crossed-product functors.

\begin{theorem}\label{thm-nuind}
Let $\rtimes_{\nu}$ be a crossed-product functor for $H$.
Then the following are true:
\begin{enumerate}
\item $(A,\alpha)\mapsto A\rtimes_{\alpha,\Ind\nu}G$ is a crossed-product functor for $G$.
\item If $\rtimes_\nu$ has the ideal property, the same holds for $\rtimes_{\Ind\nu}$.
\item If $\rtimes_\nu$ is a correspondence functor, the same holds for $\rtimes_{\Ind\nu}$.
\item If $\rtimes_\nu$ is exact and $G/H$ is compact, then $\rtimes_{\Ind\nu}$ is exact as well.
\end{enumerate}
\end{theorem}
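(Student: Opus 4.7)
The strategy parallels the proof of Theorem~\ref{thm-mures}: exploit Green's imprimitivity bimodule $X(A,\alpha)$, relating $A\rtimes_{\alpha,\un}H$ to $(A\otimes\contz(G/H))\rtimes_{\alpha\otimes\tau,\un}G$, to transfer properties from $\rtimes_\nu$ to the auxiliary functor $\rtimes_{\tilde\nu}$ on induced $G$-algebras $A\otimes\contz(G/H)$ via the Rieffel correspondence, and then pull back to $\rtimes_{\Ind\nu}$ via the canonical embeddings $j_A\colon A\to \M(A\otimes\contz(G/H))$.

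For (1), the norm bounds $\|\cdot\|_\red\le\|\cdot\|_{\Ind\nu}\le\|\cdot\|_\un$ follow from the construction together with Example~\ref{exa:induced-functors}. For functoriality, a $G$-equivariant \Star{}homomorphism $\phi\colon A\to B$ induces a $G$-equivariant map $\phi\otimes\id$ on the tensor products; a diagram chase in $\Cor$ built from Green's bimodules---analogous to the diagram appearing in the proof of Theorem~\ref{thm-mures}---shows that $(\phi\otimes\id)\rtimes_\un G$ descends through the $\tilde\nu$-quotient, and the compatibility $j_B\circ\phi=(\phi\otimes\id)\circ j_A$ then forces $\phi\rtimes_\un G$ to factor through $\rtimes_{\Ind\nu}$. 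For (2) and (3), the same diagrammatic reasoning---applied to the $G$-invariant ideals $I\otimes\contz(G/H)\subseteq A\otimes\contz(G/H)$ and to the $G$-invariant projections $p\otimes 1\in\M(A\otimes\contz(G/H))$---transfers the ideal and correspondence (equivalently, projection) properties from $\rtimes_\nu$ to $\rtimes_{\tilde\nu}$. Pulling back through the $j$'s then yields the analogous properties for $\rtimes_{\Ind\nu}$, using that the ideal property forces $\M$ of an ideal to embed into $\M$ of the ambient algebra.

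The main obstacle is part (4). Assuming $\nu$ is exact, the Rieffel correspondence applied to the short exact sequence $0\to I\otimes\cont(G/H)\to A\otimes\cont(G/H)\to B\otimes\cont(G/H)\to 0$ gives exactness of $\rtimes_{\tilde\nu}$ on induced $G$-algebras. The hypothesis that $G/H$ is compact becomes critical here: it makes $\cont(G/H)$ unital, so $j_A$ maps $A$ into $A\otimes\cont(G/H)$ itself rather than only into its multiplier algebra, and consequently $A\rtimes_{\Ind\nu}G$ sits as an honest subalgebra of $(A\otimes\cont(G/H))\rtimes_{\tilde\nu}G$. Given $x\in A\rtimes_{\Ind\nu}G$ mapping to zero in $B\rtimes_{\Ind\nu}G$, exactness of $\tilde\nu$ places the image of $x$ inside the ideal $(I\otimes\cont(G/H))\rtimes_{\tilde\nu}G$. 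For an approximate identity $(e_\lambda)$ of $I$, the ideal property (subsumed in exactness and already established for $\Ind\nu$ via~(2)) ensures $e_\lambda\cdot x\in I\rtimes_{\Ind\nu}G$, while $(e_\lambda\otimes 1)$ is an approximate identity for $I\otimes\cont(G/H)$, forcing $e_\lambda\cdot x\to x$ in $A\rtimes_{\Ind\nu}G$ and hence $x\in I\rtimes_{\Ind\nu}G$. The argument breaks down for non-compact $G/H$ because $e_\lambda\otimes 1$ is no longer a legitimate element of $I\otimes\contz(G/H)$, and the honest embedding of $A\rtimes_{\Ind\nu}G$ into the $\tilde\nu$-crossed product is lost.
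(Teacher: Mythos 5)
Your proposal is correct and follows essentially the same route as the paper: transfer the relevant properties from $\rtimes_\nu$ to the auxiliary functor $\rtimes_{\tilde\nu}$ on the algebras $A\otimes\contz(G/H)$ via Green's bimodule and the Rieffel correspondence (the Theorem~\ref{thm-mures} machinery), deduce (1)--(3) by pulling back through $j_A$ as in \cite{Buss-Echterhoff-Willett}*{Corollary~4.20}, and prove (4) from exactness of the middle row together with an approximate-unit argument that uses compactness of $G/H$ to place the constant sections inside $C(G/H,I)$. The only cosmetic difference is that the paper's approximate unit is taken inside $C_c(G,I)\subseteq I\rtimes_{\Ind\nu}G$ rather than as the multipliers $e_\lambda\otimes 1$; both implement the same standard argument.
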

\begin{proof}
Exactly the same arguments as used in the proof of Theorem~\ref{thm-mures} show that
the assignment $(A,H,\alpha)\mapsto (A\otimes C_0(G/H))\rtimes_{\alpha\otimes\tau,\tilde\nu}G$ is a
functor from the category of $G$\nb-algebras into the category of \cstar{}algebras which preserves suitable
versions of the ideal property or exactness, if this holds for the given functor on $H$. Then similar arguments as used in the
proof of \cite{Buss-Echterhoff-Willett}*{Corollary~4.20} imply items (1), (2) and (3). For the proof of (4) we consider the diagram
$$
\begin{CD}
 I\rtimes_{\nu}H @>>> A\rtimes_{\nu}H @>>> A/I\rtimes_\nu H \\
@V\sim_M V\cong V   @V\sim_MV\cong V @V\cong V\sim_MV\\
C(G/H, I)\rtimes_{\tilde\nu}G @>>> C(G/H, A)\rtimes_{\tilde\nu}G @>>> C(G/H, A/I)\rtimes_\nu G \\
@A\iota\rtimes GAA   @A\iota\rtimes GAA @AA\iota\rtimes GA\\
I\rtimes_{\Ind\nu}G @>>> A\rtimes_{\Ind\nu}G @>>> A/I\rtimes_{\Ind\nu} G
\end{CD}
$$
in which $\iota: A\to C(G/H,A)$ maps $a\in A$ to $a\cdot 1_{G/H}$.
The upper horizontal row is exact by exactness of $\rtimes_{\nu}$ and then the middle horizontal row
is exact since the upper half of the diagram commutes in the correspondence category (compare with
the proof of Theorem \ref{thm-mures}). But then a standard argument (see proof of Theorem~8.1 in \cite{Echterhoff:Crossed}),
using an approximate unit of $C(G/H, I)\rtimes_{\tilde\nu}G$ consisting of elements in $C_c(G,I)\subseteq I\rtimes_{\Ind\nu}G$
shows that the lower horizontal row is also exact. This finishes the proof.
\end{proof}

\begin{remark}\label{rem-induce}
If $H=\{e\}$ is the trivial subgroup of $G$, then the (unique) $G$\nb-crossed-product functor induced from $\{e\}$
is just the reduced crossed-product functor (see Example~\ref{exa:induced-functors}). In particular, the induced functor will not
be exact if $G$ is not exact, which shows that a functor induced from an
exact crossed-product functor does not have to be exact in general.
\end{remark}

\begin{lemma}\label{lem-indres}
Let $H$ be a closed subgroup of $G$ and let $\rtimes_{\mu}$ be a crossed-product functor on $G$.
Then $\rtimes_{\Ind(\mu|_H)}=\rtimes_{\mu_{C_0(G/H)}}$, where $\rtimes_{\mu_{C_0(G/H)}}$ is the
functor constructed from $\rtimes_\mu$ by tensoring with $D=C_0(G/H)$ as in \cite{Buss-Echterhoff-Willett}*{Corollary~4.20}.
\end{lemma}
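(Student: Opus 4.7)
The plan is to unfold both functors and reduce the claim to the equality of two ideals that are identified via the Rieffel correspondence. Fix a $G$-algebra $(A,\alpha)$ and use the standard $G$-isomorphism $\Ind_H^G(A,\alpha|_H) \cong A \otimes C_0(G/H)$ (with $G$ acting diagonally by $\alpha \otimes \tau$). Unpacking \cite{Buss-Echterhoff-Willett}*{Corollary~4.20} writes $A \rtimes_{\alpha,\mu_{C_0(G/H)}} G$ as the image of $A \rtimes_{\alpha,\un} G$ in $\M\bigl((A \otimes C_0(G/H)) \rtimes_{\alpha \otimes \tau,\mu} G\bigr)$ under the integrated form of $j_A\colon a \mapsto a \otimes 1$. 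On the other hand, Definition~\ref{def-induced-functor} writes $A \rtimes_{\alpha,\Ind(\mu|_H)} G$ as the image of $A \rtimes_{\alpha,\un} G$ under the same $j_A \rtimes_\un G$, this time composed with the quotient onto $\M\bigl((A \otimes C_0(G/H)) \rtimes_{\alpha \otimes \tau,\widetilde{\mu|_H}} G\bigr)$. It therefore suffices to verify that the defining ideal of $\widetilde{\mu|_H}$ inside $(A \otimes C_0(G/H)) \rtimes_{\alpha \otimes \tau,\un} G$ coincides with the kernel $J_\mu$ of the canonical quotient onto the $\mu$-completion of the same algebra.

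This last equality falls out of the uniqueness of the Rieffel correspondence. Let $X := X(A,\alpha|_H)$ denote Green's $\bigl((A \otimes C_0(G/H)) \rtimes_\un G,\, A \rtimes_\un H\bigr)$-imprimitivity bimodule. By construction in Definition~\ref{def-induced-functor}, $I_{A \otimes C_0(G/H),\widetilde{\mu|_H}}$ is the Rieffel correspondent under $X$ of $I_{A,\mu|_H} \subseteq A \rtimes_{\alpha,\un} H$. Conversely, the very definition of the restriction $\rtimes_{\mu|_H}$ in \eqref{eq-mures} declares $I_{A,\mu|_H}$ to be the Rieffel correspondent, under the same bimodule $X$, of precisely the ideal $J_\mu$ above. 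Since Rieffel induction is a bijection between ideal lattices, the two correspondences are inverse to each other and yield $I_{A \otimes C_0(G/H),\widetilde{\mu|_H}} = J_\mu$, whence $(A \otimes C_0(G/H)) \rtimes_{\widetilde{\mu|_H}} G = (A \otimes C_0(G/H)) \rtimes_\mu G$.

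Substituting this identification back into both descriptions, the two functors are realised as images of $A \rtimes_{\alpha,\un} G$ under literally the same map into the same multiplier algebra, giving the claimed equality $A \rtimes_{\alpha,\Ind(\mu|_H)} G = A \rtimes_{\alpha,\mu_{C_0(G/H)}} G$; naturality in $A$ is inherited from naturality of $j_A$ and of Green's bimodule, so the two functors agree. The main obstacle, and really the only substantive point to verify, is bookkeeping: one must check that the $G$-isomorphism $\Ind_H^G(A,\alpha|_H) \cong A \otimes C_0(G/H)$ and the imprimitivity bimodule $X$ appearing in Definition~\ref{def-mures} are the same ones used in Definition~\ref{def-induced-functor}, and in particular that the relevant $G$-action is $\alpha \otimes \tau$ throughout. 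Once these identifications are pinned down, the argument has no further analytic content.
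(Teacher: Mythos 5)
Your argument is correct and follows essentially the same route as the paper's proof: identify $\Ind_H^G(A,\alpha|_H)\cong A\otimes C_0(G/H)$, use the bijectivity (uniqueness) of the Rieffel correspondence for Green's bimodule to see that the ideal defining $\widetilde{\mu|_H}$ equals the kernel of the quotient onto the $\mu$-completion of $(A\otimes C_0(G/H))\rtimes G$, and then observe that both functors are realised as the image of $A\rtimes_\un G$ under the same canonical map into $\M\bigl((A\otimes C_0(G/H))\rtimes_\mu G\bigr)$. Your version is simply more explicit about the two applications of the Rieffel correspondence that the paper compresses into one sentence.
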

\begin{proof}
Let $(A,\alpha)$ be a $G$-algebra. Then $\Ind_H^GA$ is isomorphic to $A\otimes C_0(G/H)$ via the isomorphism
which sends a function $F\in \Ind_H^GA$ to the function $(g\mapsto \alpha_g(F(g)))\in C_0(G/H,A)$.
It follows that $A\rtimes_{\mu|_H}H$ is the unique quotient of $A\rtimes_{\un}H$
such that Green's $(A\otimes C_0(G/H))\rtimes_\un G- A\rtimes_\un H$ equivalence bimodule factors through an
$(A\otimes C_0(G/H))\rtimes_\mu G- A\rtimes_{\mu|_H}H$ equivalence bimodule. But it follows then from the definition
of the induced functor $\rtimes_{\Ind(\mu|_H)}$ that $A\rtimes_{\Ind(\mu|_H)} G$ is precisely the
 quotient of $A\rtimes_\un G$ by the kernel of the canonical homomorphism $A\rtimes_\un G\to \M\big((A\otimes C_0(G/H))\rtimes_\mu G\big)$,
 hence it coincides with $A\rtimes_{\mu_{C_0(G/H)}} G$.
\end{proof}

\begin{corollary}\label{cor:Ind-Exact=Exact} Let $H$ be a cocompact closed subgroup of $G$. Then
$$\rtimes_{\Ind\E_\Cor^H}=\rtimes_{\E_\Cor^G}.$$
\end{corollary}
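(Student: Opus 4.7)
The plan is to verify the equality by establishing both inequalities in the order $\ge$ recalled before Theorem~\ref{min ex}. One direction, $\rtimes_{\E_\Cor^G}\le\rtimes_{\Ind\E_\Cor^H}$, is immediate: Theorem~\ref{thm-nuind}(3) gives that $\rtimes_{\Ind\E_\Cor^H}$ is a correspondence functor, and part~(4) of the same theorem upgrades this to exactness using the compactness of $G/H$ (this is precisely where the cocompactness hypothesis is consumed); minimality of $\rtimes_{\E_\Cor^G}$ among exact correspondence functors for $G$ then forces the inequality at once.

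For the reverse $\rtimes_{\Ind\E_\Cor^H}\le\rtimes_{\E_\Cor^G}$, I will chain three inequalities. First, Theorem~\ref{thm-mures}(3)--(4) shows the restriction $\rtimes_{\E_\Cor^G|_H}$ is an exact correspondence functor for $H$, so minimality of $\rtimes_{\E_\Cor^H}$ on the $H$-side delivers $\rtimes_{\E_\Cor^H}\le\rtimes_{\E_\Cor^G|_H}$. Second, I will verify that induction is monotone for $\le$: if $\nu_1\le\nu_2$ then $I_{A,\nu_1}\supseteq I_{A,\nu_2}$, and hence $I_{\tilde\nu_1}\supseteq I_{\tilde\nu_2}$ by order-preservation of the Rieffel correspondence; a direct inspection of the defining kernel
$$K_\nu=\ker\Big(A\rtimes_{\alpha,\un}G\xrightarrow{q_{\tilde\nu}\circ(j_A\rtimes_\un G)}\M((A\otimes C_0(G/H))\rtimes_{\alpha\otimes\tau,\tilde\nu}G)\Big)$$
of $A\rtimes_{\alpha,\Ind\nu}G$ inside $A\rtimes_{\alpha,\un}G$ then yields $K_{\nu_1}\subseteq K_{\nu_2}$ and hence $\rtimes_{\Ind\nu_1}\le\rtimes_{\Ind\nu_2}$; applied to the first step this gives $\rtimes_{\Ind\E_\Cor^H}\le\rtimes_{\Ind(\E_\Cor^G|_H)}$. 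Third, Lemma~\ref{lem-indres} identifies the right-hand side with $\rtimes_{(\E_\Cor^G)_{C_0(G/H)}}$, and the tensored construction always satisfies $\rtimes_{\mu_D}\le\rtimes_\mu$ when $\mu$ has the ideal property (because the composed map $A\rtimes_\un G\to\M((A\otimes D)\rtimes_\mu G)$ factors through $A\rtimes_\mu G$); since correspondence functors have the ideal property by Theorem~\ref{corr props}, this yields $\rtimes_{(\E_\Cor^G)_{C_0(G/H)}}\le\rtimes_{\E_\Cor^G}$. Chaining completes the argument.

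The main obstacle is the monotonicity of induction in the second step, as it requires extending the surjective quotient $q_{\tilde\nu}$ to the multiplier algebra $\M((A\otimes C_0(G/H))\rtimes_\un G)$ and checking that it composes correctly with $j_A\rtimes_\un G$; but surjective $*$-homomorphisms of $C^*$-algebras are strict, so this extension is canonical, and the containment of kernels then follows directly from the order-preservation of the Rieffel correspondence together with the definition. Once monotonicity is in hand, the result is a direct assembly of the two minimality statements with Lemma~\ref{lem-indres} and the general inequality $\rtimes_{\mu_D}\le\rtimes_\mu$.
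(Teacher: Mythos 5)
Your argument is correct, and its skeleton coincides with the paper's: both directions come from the minimality of $\rtimes_{\E_\Cor^G}$ and $\rtimes_{\E_\Cor^H}$ among exact correspondence functors (with cocompactness consumed exactly where you say, in Theorem~\ref{thm-nuind}(4)), combined with Theorem~\ref{thm-mures}, monotonicity of induction, and Lemma~\ref{lem-indres}. The one genuine point of divergence is the final link. The paper first establishes the \emph{equality} $\rtimes_{\Ind(\E_\Cor^G|_H)}=\rtimes_{\E_\Cor^G}$ by quoting Lemma~\ref{lem-indres} together with Corollary~8.9 of \cite{Buss-Echterhoff-Willett} (which says that tensoring the minimal exact correspondence functor with a suitable $D$ does not change it), and only then runs the domination chain; it also leaves the monotonicity of $\nu\mapsto\Ind\nu$ implicit. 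You instead observe that only the inequality $\rtimes_{(\E_\Cor^G)_{C_0(G/H)}}\le\rtimes_{\E_\Cor^G}$ is needed, and you derive it elementarily from the ideal property: the map $q_\mu\circ(j_A\rtimes_\un G)$ agrees on $C_c(G,A)$ with $(j_A\rtimes_\mu G)\circ q_\mu^A$, so its kernel contains that of $q_\mu^A$. This buys a more self-contained proof (no appeal to the external Corollary~8.9) at the cost of being slightly longer; your explicit verification that induction is order-preserving via the Rieffel correspondence fills in a step the paper takes for granted. Both treatments are sound.
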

\begin{proof}
It follows from \cite{Buss-Echterhoff-Willett}*{Corollary~8.9} together with
Lemma~\ref{lem-indres} that the  functors $\rtimes_{\Ind(\E_\Cor^G|_H)}$ and $\rtimes_{\E_\Cor^G}$ coincide.
Since $\rtimes_{\E_\Cor^G|_H}$ is an exact correspondence functor for $H$, it dominates $\rtimes_{\E_\Cor^H}$. Hence
$\rtimes_{\E_\Cor^G}=\rtimes_{\Ind(\E_\Cor^G|_H)}$ dominates $\rtimes_{\Ind\E_\Cor^H}$. Since
$\rtimes_{\Ind\E_\Cor^H}$ is an exact correspondence functor by Theorem \ref{thm-nuind}, it dominates $\rtimes_{\E_\Cor^G}$,
hence both must be equal.
\end{proof}

Again, the above result can be used to yield the following well-known consequence:

\begin{corollary}
If a locally compact group $G$ contains an exact cocompact closed subgroup $H$, then $G$ is exact.
\end{corollary}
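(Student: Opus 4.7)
The plan is to chain together Corollary \ref{cor:Ind-Exact=Exact}, Example \ref{exa:induced-functors}, and the characterisation of exactness via the minimal exact correspondence functor. Recall that a locally compact group $L$ is exact if and only if $\rtimes_{\E^L_\Cor}$ coincides with $\rtimes_\red^L$, so the goal is precisely to show $\rtimes_{\E^G_\Cor}=\rtimes_\red^G$ under the hypothesis that $H\leq G$ is closed, cocompact, and exact.

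First, since $H$ is exact, its minimal exact correspondence functor coincides with the reduced $H$-crossed product, i.e.\ $\rtimes_{\E^H_\Cor}=\rtimes_\red^H$. Next, I apply the induction construction to both sides. On one hand, Corollary \ref{cor:Ind-Exact=Exact} (which uses cocompactness of $H$) gives $\rtimes_{\Ind \E^H_\Cor}=\rtimes_{\E^G_\Cor}$. On the other hand, Example \ref{exa:induced-functors} identifies the functor induced from the reduced $H$-crossed product with the reduced $G$-crossed product, i.e.\ $\rtimes_{\Ind \red^H}=\rtimes_\red^G$. Combining the two identifications yields $\rtimes_{\E^G_\Cor}=\rtimes_\red^G$, whence $G$ is exact.

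There is really no main obstacle here, because the substantive work has been done in the two ingredients cited above: Corollary \ref{cor:Ind-Exact=Exact} packages the nontrivial interaction of induction, cocompactness, and the minimal exact correspondence functor, while Example \ref{exa:induced-functors} handles the (already nontrivial) fact that induction from the reduced crossed product recovers the reduced crossed product on $G$. The only thing one should verify, essentially tautologically, is that the induction procedure is monotone with respect to the domination order on crossed-product functors, so that $\rtimes_{\E^H_\Cor}=\rtimes_\red^H$ really does pass through induction to give an equality $\rtimes_{\Ind \E^H_\Cor}=\rtimes_{\Ind \red^H}$; this is immediate from Definition \ref{def-induced-functor} because passing to a larger quotient on the $H$-side corresponds via the Rieffel correspondence to a larger ideal, hence a larger quotient, on the induced $G$-side.
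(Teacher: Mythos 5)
Your proof is correct and follows essentially the same route as the paper: use the characterisation of exactness of $H$ as $\rtimes_{\E^H_\Cor}=\rtimes_\red^H$, apply Corollary~\ref{cor:Ind-Exact=Exact} and Example~\ref{exa:induced-functors}, and conclude $\rtimes_{\E^G_\Cor}=\rtimes_\red^G$. Your closing remark about monotonicity of induction is not even needed, since you are inducing a single functor written in two equal ways, so the identification is immediate.
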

\begin{proof}
If $H$ is exact, then it means that $\rtimes_{\E_\Cor^H}=\rtimes_\red^H$ is the reduced crossed-product functor.
But since the induced $G$-crossed-product functor from $\rtimes_\red^H$ is $\rtimes_\red^G$ (see Example~\ref{exa:induced-functors}),
it follows from Corollary~\ref{cor:Ind-Exact=Exact} that $\rtimes_{\E_\Cor^G}=\rtimes_\red^G$, so $G$ is exact.
\end{proof}

\section{Normal subgroups}

In this section we want to show that if $N$ is a closed normal subgroup of $G$ and if $\rtimes_\E:=\rtimes_{\E_\Cor^G}$ denotes the
 minimal exact correspondence functor for $G$, then the restriction $\rtimes_{\E|_N}$ of
 $\rtimes_\E$ to $N$ is the minimal exact correspondence functor for $N$. Thus, as a consequence, it follows
 from Proposition \ref{prop-BCrestrict} that the validity of the reformulated version of the Baum-Connes conjecture
 due to Baum, Guentner, and Willett will pass to closed normal subgroups.
 In order to prove the result we need some preparations. As a first step, we show that the minimal
 crossed-product functor behaves well with respect to automorphisms of the group:

 \begin{lemma}\label{lem-automorphism} Let $\varphi:G\to G$ be an automorphism of the locally compact group $G$ and let $\alpha:G\to\Aut(A)$
 be an action. Let $\alpha^\varphi:G\to \Aut(A)$ be the action $\alpha^\varphi:=\alpha\circ \varphi^{-1}$ and let $\delta_\varphi\in (0,\infty)$
 be the module of $\varphi$ for the Haar measure of $G$, i.e., we have
 $\int_G \psi(g)\,dg=\delta_\varphi \int_G \psi(\varphi^{-1}(g))\,dg$
 for all $\psi\in C_c(G)$. Then the mapping
 $$\Phi: C_c(G, A, \alpha)\to C_c(G,A,\alpha^\varphi); \;\Phi(f)(g)=\delta_\varphi f(\varphi^{-1}(g))$$
 extends to \Star{}isomorphisms
 \begin{align*}
 \Phi_\un: &A\rtimes_{\alpha,\un}G\congto A\rtimes_{\alpha^\varphi, \un}G,\\
  \Phi_\red: &A\rtimes_{\alpha,\red}G\congto A\rtimes_{\alpha^\varphi, \red}G,\quad\text{and}\\
   \Phi_\E: &A\rtimes_{\alpha,\E}G\congto A\rtimes_{\alpha^\varphi, \E}G.
   \end{align*}
   \end{lemma}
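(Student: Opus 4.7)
The plan is to verify each of the three isomorphisms in turn, using only the naturality of the corresponding functor. As a first step, I would check directly that $\Phi$ is a \Star{}isomorphism between $C_c(G,A)$ equipped with the $\alpha$-convolution and with the $\alpha^\varphi$-convolution. The factor $\delta_\varphi$ is precisely what is needed so that the substitution $s=\varphi(r)$ in the convolution formula for $\alpha^\varphi$ cancels the Jacobian and reproduces the formula for $\alpha$; preservation of the involution relies on the standard fact that $\Delta\circ\varphi=\Delta$ for every topological automorphism of $G$.

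For $\Phi_\un$, I would invoke the universal property of the maximal crossed product. Given a covariant pair $(\pi,u)$ for $(A,\alpha)$, the pair $(\pi,u^\varphi)$ with $u^\varphi_s:=u_{\varphi^{-1}(s)}$ is covariant for $(A,\alpha^\varphi)$ because $u^\varphi_s\pi(a)(u^\varphi_s)^*=\pi(\alpha_{\varphi^{-1}(s)}(a))=\pi(\alpha^\varphi_s(a))$, and the assignment $(\pi,u)\leftrightarrow(\pi,u^\varphi)$ is clearly a bijection between covariant pairs. A short change-of-variables shows $(\pi\rtimes u^\varphi)\circ\Phi=\pi\rtimes u$, so $\Phi$ is isometric for the universal norm. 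For $\Phi_\red$, define a unitary $V$ on $L^2(G,A)$ by $(V\xi)(s):=\delta_\varphi^{1/2}\xi(\varphi^{-1}(s))$; direct calculations yield $V\pi(a)V^*=\pi^\varphi(a)$ and $V\lambda_t V^*=\lambda_{\varphi(t)}$, whence $V(\pi\rtimes(\lambda\otimes 1))(f)V^*=(\pi^\varphi\rtimes(\lambda\otimes 1))(\Phi(f))$, giving isometry for the reduced norm.

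The third step, the isomorphism $\Phi_\E$, is the substantive one, and the plan is to exploit the characterisation of $\rtimes_\E=\rtimes_{\E_\Cor^G}$ as the minimal exact correspondence functor. The automorphism $\varphi$ induces an autoequivalence $\Theta_\varphi$ of the $G$-correspondence category $\Cor(G)$ that leaves underlying $C^*$-algebras, $*$-homomorphisms, Hilbert modules and left actions unchanged but replaces every $G$-action $\alpha$ by $\alpha^\varphi$; in particular $\Theta_\varphi$ preserves short exact sequences and equivariant correspondences. For any exact correspondence functor $\rtimes_\mu$ for $G$ I would define a transported functor $\rtimes_{\mu^\varphi}$ on $C_c(G,A,\alpha)$ by declaring $\|f\|_{\alpha,\mu^\varphi}:=\|\Phi(f)\|_{\alpha^\varphi,\mu}$. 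Chasing definitions through $\Theta_\varphi$ and using the first two steps to control the universal and reduced bounds, $\rtimes_{\mu^\varphi}$ is again an exact correspondence functor. Taking $\mu=\E$ and invoking minimality yields $\|f\|_{\alpha,\E}\leq\|\Phi(f)\|_{\alpha^\varphi,\E}$; the reverse inequality follows by running the same construction on the system $(A,\alpha^\varphi)$ with $\varphi^{-1}$ in place of $\varphi$, together with the observation that the analogous maps for $\varphi$ and $\varphi^{-1}$ are mutually inverse. The main obstacle is the categorical bookkeeping needed to confirm that $\Theta_\varphi$ really preserves all the structure defining an exact correspondence functor (exact sequences, Hilbert modules, left actions, composition of correspondences); once this is verified, the minimality argument is essentially formal.
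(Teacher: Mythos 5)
Your proposal is correct and follows essentially the same route as the paper: the bijection $(\pi,u)\leftrightarrow(\pi,u\circ\varphi^{-1})$ of covariant pairs for the universal norm, a unitary $\xi\mapsto\delta_\varphi^{1/2}\,\xi\circ\varphi^{-1}$ on $L^2(G,A)$ intertwining the regular representations for the reduced norm, and for $\rtimes_\E$ the transported functor $\|f\|_{\mu^\varphi}:=\|\Phi(f)\|_{\alpha^\varphi,\mu}$ (which is exactly the paper's $(A\rtimes_{\alpha,\un}G)/\ker(q^\varphi\circ\Phi_\un)$), verified to be an exact correspondence functor so that minimality gives one inequality and the same argument with $\varphi^{-1}$ gives the other. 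The only quibble is a harmless convention slip in the covariance identity (the paper's convention is $u_s^*\pi(a)u_s=\pi(\alpha_s(a))$), which does not affect the argument.
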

   \begin{proof} It is straightforward to check that $\Phi$ is a \Star{}isomorphism between the dense
   \Star{}subalgebras $C_c(G,A, \alpha)$ and $C_c(G,A, \alpha^\varphi)$ of the respective crossed products (the
   entries $\alpha$ and $\alpha^\varphi$ indicate the different operations on $C_c(G,A)$).
   To see that it extends to an isomorphism $ \Phi_\un: A\rtimes_{\alpha,\un}G\congto A\rtimes_{\alpha^\varphi, \un}G$
   we just observe that we have a bijective correspondence between covariant representations
   of $(A,G,\alpha)$ and of $(A,G,\alpha^\varphi)$ given by $(\pi, u)\mapsto (\pi, u^\varphi)$ with
   $u^\varphi:=u\circ \varphi^{-1}$ such that $\pi\rtimes u^\varphi(\Phi(f))=\pi\rtimes u(f)$. This implies that
   $$\|\Phi(f)\|_\un=\sup_{(\pi, u)} \|\pi\rtimes u^\varphi(\Phi(f))\|=\sup_{(\pi,u)}\|\pi\rtimes u(f)\|=\|f\|_\un.$$

For the reduced crossed products
   recall from Definition \ref{uni red cp}  the construction of the regular representation $\Lambda= \pi\rtimes(\lambda\otimes 1)$
   of $(A,G,\alpha)$ on the Hilbert $A$-module $L^2(G,A)$ with
   $(\pi(a)\xi)(g)=\alpha_{g^{-1}}(a)\xi(g)$
 for $a\in A, t,g\in G$ and $\xi\in L^2(G,A)$. Similarly, the C$^*$-part $\pi^\varphi$ of the regular representation
 $\Lambda^\varphi=\pi^\varphi\rtimes (\lambda\otimes 1)$ of $(A,G,\alpha^\varphi)$ is given by
   $(\pi^\varphi(a)\xi)(g)=\alpha_{\varphi^{-1}(g^{-1})}(a)\xi(g)$.
An easy computation  then shows  that  the unitary operator
 $$W: L^2(G, A)\to L^2(G, A); \; W\xi(g)=\sqrt{\delta_{\varphi}}\xi(\varphi^{-1}(g))$$
 intertwines the regular representation $\Lambda^\varphi$ with
  $\Lambda_A\rtimes\Lambda_G^\varphi$. Hence, up to unitary equivalence, the
 above described correspondence of covariant representations of $(A,G,\alpha)$ and $(A,G,\alpha^\varphi)$
 sends $\Lambda$ to $\Lambda^\varphi$ which proves that $\Phi$ extends to an isomorphism
 of the reduced crossed products.

 To see that $\Phi$ also extends to an isomorphism $\Phi_\E: A\rtimes_{\alpha,\E}G\congto A\rtimes_{\alpha^\varphi, \E}G$
 we argue as follows: Let $\tilde\Phi:A\rtimes_{\alpha,\un}G\to A\rtimes_{\alpha^\varphi, \E}G$ denote the surjective
 \Star{}homomorphism given by composing $\Phi_{\un}$ with the quotient map
 $q^\varphi:A\rtimes_{\alpha^\varphi, \un}G\onto A\rtimes_{\alpha^\varphi, \E}G$.
 Let $A\rtimes_{\alpha,\E^\varphi}G:=(A\rtimes_{\alpha, \un}G)/\ker\tilde\Phi$.
 It is straightforward to check that $(A,\alpha)\mapsto A\rtimes_{\alpha, \E^\varphi}G$ is an
 exact correspondence functor for $G$, hence it must dominate $A\rtimes_{\alpha, \E}G$ in the sense
 that the identity on $C_c(G,A)$ induces a surjective \Star{}homomorphism
  $q: A\rtimes_{\alpha,\E^\varphi}G\onto A\rtimes_{\alpha,\E}G$. In other words, this shows that
  the inverse $\Phi^{-1}: C_c(G,A,\alpha^\varphi)\to C_c(G,A,\alpha)$ extends to a surjective
  \Star{}homomorphism $A\rtimes_{\alpha^\varphi, \E}G\onto A\rtimes_{\alpha,\E}G$.
   Conversely, if we replace $\varphi$ by its inverse
  $\varphi^{-1}$ and apply the above results to the action $\alpha^\varphi$, we see that
  $\Phi$ extends to a surjective \Star{}homomorphism
  $A\rtimes_{\alpha,\E}G\onto A\rtimes_{\alpha^\varphi,\E}G$. This combines to show that $\Phi$ extends to
  the desired \Star{}isomorphism $\Phi_\E$.
  \end{proof}

   We now want to extend the above result to automorphisms $\varphi:=(\varphi_A,\varphi_G)$ of the system
   $(A,G,\alpha)$. This means that $\varphi_A:A\to A$ is a \Star{}automorphism of $A$ and $\varphi_G:G\to G$ is an
   automorphism of $G$ such that
   \begin{equation}\label{eq-auto}
   \alpha_{\varphi_G(s)}(\varphi_A(a))=\varphi_A(\alpha_s(a))\quad \forall s\in G, a\in A.
   \end{equation}
Note that if $\alpha:G\to \Aut(A)$ is an action and $N\subseteq G$ is a normal
   subgroup of $G$, then every $g\in G$ determines an automorphism $\gamma_g:=(\alpha_g, C_g)$
   of $(A, N,\alpha)$ with $\alpha_g:A\to A$ the given action of the element $g\in G$ and $C_g:N\to N$
   the automorphism given by conjugation with $g$: $C_g(n)=gng^{-1}$.
   It is well-known that every automorphism $\varphi$ of $(A,G,\alpha)$ induces  automorphisms
   $\varphi_\un$ and $\varphi_\red$ on $A\rtimes_{\alpha,\un}G$ and $A\rtimes_{\alpha,\red}G$, respectively,
   both extending the \Star{}isomorphism  $\tilde\varphi:C_c(G,A)\to C_c(G,A)$ given by the formula
   \begin{equation}\label{eq-autom}
   \big(\tilde\varphi(f)\big)(s)=\delta_\varphi \varphi_A(f(\varphi_G^{-1}(s)), \quad f\in C_c(G,A), s\in G,
   \end{equation}
   where $\delta_\varphi$ denotes the module of the automorphism $\varphi_G$.
   The proof follows easily from Lemma \ref{lem-automorphism} and the arguments given in the proof of
   the following lemma in the case of the $\rtimes_\E$-crossed products:

   \begin{lemma}\label{lem-automorphisms2}
   If $\varphi=(\varphi_A,\varphi_G)$ is an automorphism of $(A,G,\alpha)$, then
   the  \Star{}isomorphism $\tilde\varphi:C_c(G,A)\to C_c(G,A)$ extends to a
   \Star{}automorphism $\varphi_\E$ of $A\rtimes_{\alpha, \E}G$.
    \end{lemma}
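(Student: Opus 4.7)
The plan is to decompose the system automorphism $\varphi=(\varphi_A,\varphi_G)$ into a purely group-theoretic piece (handled by Lemma~\ref{lem-automorphism}) and a purely algebra-theoretic piece (handled by functoriality of $\rtimes_\E$ for equivariant \Star{}homomorphisms). The first step is to observe that the compatibility condition \eqref{eq-auto} can be rewritten: setting $h=\varphi_G(g)$, one obtains
$$\alpha_h\circ\varphi_A=\varphi_A\circ\alpha_{\varphi_G^{-1}(h)}=\varphi_A\circ(\alpha^{\varphi_G})_h,$$
where $\alpha^{\varphi_G}=\alpha\circ\varphi_G^{-1}$ as in Lemma~\ref{lem-automorphism}. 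Hence $\varphi_A:(A,\alpha^{\varphi_G})\to(A,\alpha)$ is a $G$\nobreakdash-equivariant \Star{}isomorphism of $G$-algebras (with equivariant inverse $\varphi_A^{-1}$).

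Next, I would apply Lemma~\ref{lem-automorphism} to the group automorphism $\varphi_G$ and the action $\alpha$ to get a \Star{}isomorphism
$$\Phi_\E:A\rtimes_{\alpha,\E}G\congto A\rtimes_{\alpha^{\varphi_G},\E}G$$
extending the map $f\mapsto\big(s\mapsto\delta_\varphi f(\varphi_G^{-1}(s))\big)$ on $C_c(G,A)$. Then, since $\rtimes_\E$ is a functor on equivariant \Star{}homomorphisms, the equivariant \Star{}isomorphism $\varphi_A$ from the first step descends to a \Star{}homomorphism
$$\varphi_A\rtimes_\E G:A\rtimes_{\alpha^{\varphi_G},\E}G\to A\rtimes_{\alpha,\E}G,$$
given on $C_c(G,A)$ by pointwise application of $\varphi_A$. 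Applying functoriality also to $\varphi_A^{-1}$ produces a two-sided inverse, so $\varphi_A\rtimes_\E G$ is in fact a \Star{}isomorphism.

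Finally, the candidate for $\varphi_\E$ is the composition $(\varphi_A\rtimes_\E G)\circ\Phi_\E$, which is a \Star{}automorphism of $A\rtimes_{\alpha,\E}G$ as a composition of \Star{}isomorphisms. I would close the argument by a direct check on $f\in C_c(G,A)$:
$$\varphi_\E(f)(s)=\varphi_A\!\big(\delta_\varphi f(\varphi_G^{-1}(s))\big)=\delta_\varphi\varphi_A(f(\varphi_G^{-1}(s)))=\tilde\varphi(f)(s),$$
matching formula \eqref{eq-autom}. I do not foresee a real obstacle: the two key ingredients (Lemma~\ref{lem-automorphism} for the group automorphism and functoriality of $\rtimes_\E$ for equivariant \Star{}homomorphisms) are already available, and the remaining work is only to verify that the composition reproduces $\tilde\varphi$ on the dense subalgebra $C_c(G,A)$.
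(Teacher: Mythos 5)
Your proof is correct and follows essentially the same route as the paper: factor $\varphi$ through the twisted action $\alpha^{\varphi_G}$, apply Lemma~\ref{lem-automorphism} to handle the group automorphism, and use functoriality of $\rtimes_\E$ for the $\alpha^{\varphi_G}$--$\alpha$ equivariant isomorphism $\varphi_A$. Your additional remark that invertibility of $\varphi_A\rtimes_\E G$ follows by applying functoriality to $\varphi_A^{-1}$ is a detail the paper leaves implicit, but there is no substantive difference.
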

    \begin{proof}
    Let $\alpha^\varphi:=\alpha\circ \varphi_G^{-1}$ be the action of $G$ on $A$ `twisted' by $\varphi_G$.
     Lemma \ref{lem-automorphism} shows that
    $\Phi:C_c(G,A,\alpha)\to C_c(G,A,\alpha^\varphi)$ given by $\Phi(f)(s)=\delta_\varphi f(\varphi_G^{-1}(s))$
   extends to an isomorphism $\Phi_\E: A\rtimes_{\alpha, \E}G\congto A\rtimes_{\alpha^\varphi, \E}G$.
    On the other hand it follows from (\ref{eq-auto}) that the automorphism $\varphi_A:A\to A$ is
   $\alpha^\varphi-\alpha$ equivariant. By functoriality of $\rtimes_\E$ it therefore induces an isomorphism
   $\varphi_A\rtimes G: A\rtimes_{\alpha^\varphi, \E}G\to A\rtimes_{\alpha,\E}G$ given
   on $C_c(G,A)$ by sending a function $f$ to $\varphi_A\circ f$. The composition
   $\varphi_\E:=(\varphi_A\rtimes G)\circ \Phi_\E$ is then an automorphism of $A\rtimes_{\alpha, \E}G$
   which extends $\tilde\varphi: C_c(G,A)\to C_c(G,A)$.
    \end{proof}

\begin{example}\label{ex-autom}
We should note that the argument of the lemma works  for any crossed-product functor $\rtimes_\mu$
for $G$ such that an analogue of Lemma \ref{lem-automorphism} holds for $\rtimes_\mu$, i.e., the
isomorphism $\Phi: C_c(G, A,\alpha)\to C_c(G,A,\alpha^\varphi)$ of that lemma extends to an isomorphism
$\Phi_\mu: A\rtimes_{\alpha,\mu}G\congto A\rtimes_{\alpha^\varphi, \mu}G$. Hence it applies in particular
for the full and reduced crossed products.

But  we should point out that analogues of Lemmas \ref{lem-automorphism} and \ref{lem-automorphisms2}.
do not even hold for all KLQ crossed-product functors. For this let $G$ be any non-amenable
group and let $E\subseteq B(G\times G)$ be the
weak$^*$ closure of the ideal consisting
of all coefficient functions $\phi$  that are supported in a set of the form $G\times K$, where $K$ is a compact subset of $G$.
Then a unitary representation
$v:G\times G\to \U(\H)$  integrates to $C_E^*(G\times G)$  if and only if its restriction
to the second factor
is weakly contained in $\lambda_G$.
Let $\varphi:G\times G\to G\times G$ be the flip automorphism and let $v=u_G\otimes \lambda_G$
where $u_G$ denotes the universal representation of $G$. Then $v$ factors through
$C_E^*(G\times G)$ but $v\circ \varphi =\lambda_G\otimes u_G$ does not. Hence
$\varphi$ does not `extend' to an automorphism of $C_E^*(G\times G)$
and  the corresponding KLQ-functor fails Lemmas  \ref{lem-automorphism} and \ref{lem-automorphisms2}.
\end{example}

 \begin{lemma}\label{lem-normal1} Let $\rtimes_\mu$ be a crossed-product functor for $G$ and let
 $N$ be a closed normal subgroup of $G$. Further let $(A,G,\alpha)$ be a $G$-system.
 Then the action $\gamma^\mu$ of $G$ on $C_c(N,A)$ defined on the level of $C_c(N,A)$ by the formula
$$ (\gamma^\mu_g(f))(n)=\delta_g \alpha_g(f(g^{-1}ng)),$$
where $\delta_g$ denotes the module for the automorphism $C_g(n)=gng^{-1}$ of $N$,  extends to an action $\gamma^\mu:G\to \Aut(A\rtimes_{\alpha, \mu|_N}N)$.
 \end{lemma}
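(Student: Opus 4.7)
First I would observe that, since $N$ is normal in $G$, the pair $\varphi_g := (\alpha_g, C_g)$ is a system automorphism of $(A, N, \alpha|_N)$ -- the covariance $\alpha_{C_g(n)} \circ \alpha_g = \alpha_g \circ \alpha_n$ for $n \in N$ is immediate from $C_g(n)g = gn$. The formula defining $\gamma^\mu_g$ on $C_c(N, A)$ is exactly the map attached to $\varphi_g$ by equation~\eqref{eq-autom}, and so, by the remarks following that equation, $\gamma^\mu_g$ extends to a \Star{}automorphism $\tilde\varphi_g$ of $A \rtimes_{\alpha,\un} N$. The lemma then reduces to showing that $\tilde\varphi_g$ preserves the ideal $I_{\alpha,\mu|_N} \subseteq A \rtimes_{\alpha,\un} N$, so that it descends to the quotient $A \rtimes_{\alpha,\mu|_N} N$.

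To prove invariance I would transfer the problem across Green's imprimitivity theorem and then exploit the full $G$\nb-algebra structure on~$A$. By the Rieffel correspondence induced by Green's bimodule $X(A, \alpha|_N)$, the ideal $I_{\alpha,\mu|_N}$ corresponds to $I_{\Ind\alpha,\mu} \subseteq \Ind_N^G(A, \alpha|_N) \rtimes_{\Ind\alpha,\un} G$. A direct computation with induced covariant representations -- using the unitary $(W\xi)(s) := \xi(g^{-1}sg)$ on the induction Hilbert space (with a modular correction from conjugation by $g$ on $G/N$) -- identifies $\tilde\varphi_g$ with the integrated form $\widehat\varphi_g$ of the system automorphism $(\Psi_g, C_g)$ of $(\Ind_N^G(A, \alpha|_N), G, \Ind\alpha)$, where $\Psi_g(F)(s) := \alpha_g(F(g^{-1}sg))$.

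The key algebraic step is the decomposition $\Psi_g = \Ind\alpha_g \circ \Theta_g$, where $\Theta_g(F)(s) := \alpha_g(F(sg))$. Under the canonical $G$\nb-equivariant isomorphism $\Ind_N^G(A, \alpha|_N) \cong A \otimes C_0(G/N)$ (with diagonal action $\alpha \otimes \tau$), $\Theta_g$ corresponds to $\id_A \otimes R_g^*$, the pullback by right translation on $G/N$, which is a well-defined automorphism precisely because $N$ is normal. Hence $\Theta_g$ is a $G$\nb-equivariant automorphism, and by functoriality of $\rtimes_\mu$ it induces an automorphism of $\Ind_N^G A \rtimes_{\Ind\alpha,\mu} G$. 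The complementary factor $(\Ind\alpha_g, C_g)$ is implemented by the inner automorphism $\Ad(\iota_G(g))$, and since $\iota_G(g)$ lies in $\U\M(\Ind_N^G A \rtimes_{\Ind\alpha,\mu} G)$ for every~$\mu$, this factor also descends. Consequently $\widehat\varphi_g$ preserves $I_{\Ind\alpha,\mu}$, and by the Rieffel correspondence $\tilde\varphi_g$ preserves $I_{\alpha,\mu|_N}$, producing the desired automorphism $\gamma^\mu_g \in \Aut(A \rtimes_{\alpha,\mu|_N} N)$.

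The relation $\gamma^\mu_{gh} = \gamma^\mu_g \circ \gamma^\mu_h$ is verified directly on $C_c(N, A)$ using $\alpha_{gh} = \alpha_g \alpha_h$, $C_{gh} = C_g C_h$, and $\delta_{gh} = \delta_g \delta_h$, and extended by density; strong continuity at $e$ reduces -- by an $\eps/3$ argument using density of $C_c(N, A)$ and the isometric bound $\|\gamma^\mu_g\| \leq 1$ -- to the convergence $\gamma^\mu_g(f) \to f$ in the inductive limit topology for each $f \in C_c(N, A)$ as $g \to e$, which is standard. The hard step is the identification in the second paragraph; the crucial payoff is the decomposition $\Psi_g = \Ind\alpha_g \circ \Theta_g$, which expresses $\widehat\varphi_g$ as the composition of an inner automorphism with a $G$\nb-equivariant one -- two operations that pass trivially to any crossed-product functor, so no separate hypothesis on~$\mu$ analogous to Lemma~\ref{lem-automorphism} is needed.
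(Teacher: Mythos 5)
Your proof is correct and follows essentially the same route as the paper: both transfer the problem across Green's imprimitivity bimodule to $(A\otimes C_0(G/N))\rtimes_{\alpha\otimes\tau}G$ and then invoke functoriality of $\rtimes_\mu$ for the $G$\nb-equivariant right-translation automorphism of the $C_0(G/N)$ factor to see that the relevant kernel is invariant, pulling the invariance back through the Rieffel correspondence. The only substantive difference is bookkeeping in the compatibility step: the paper applies functoriality directly to $\id_A\otimes\sigma_g$ and cites \cite[Theorem 1]{Echterhoff:Morita_twisted} for the compatible bimodule automorphism, whereas you carry the group-automorphism part $C_g$ along and absorb it into the inner automorphism $\Ad(\iota_G(g))$ --- which exists in the multiplier algebra of every quotient and fixes every closed ideal --- arriving at the same conclusion (the two candidate automorphisms differ only by this inner factor, so both admit compatible bimodule maps covering $\gamma^\un_g$ and induce the same ideal correspondence).
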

\begin{proof} Recall that $A\rtimes_{\alpha,\mu|_N}N$  is defined
as the quotient $(A\rtimes_{\alpha,\un}N)/J_\mu$, where $J_\mu$ is the ideal in
$A\rtimes_{\alpha,\un}N$ which corresponds to the ideal
$$K_\mu:=\ker(\Ind_N^GA\rtimes_{\Ind\alpha,\un}G\to \Ind_N^GA\rtimes_{\Ind\alpha,\mu}G)$$
via Green's imprimitivity bimodule. Since $A$ is a $G$-algebra, the induced algebra
$\Ind_N^GA$ is $G$-isomorphic to $A\otimes C_0(G/N)$ equipped with the diagonal action
$\alpha\otimes \tau$, where $\tau$ denotes the left-translation action.
Let $\beta_g$ denote the automorphism
of $(A\otimes C_0(G/N))\rtimes_{\alpha\otimes\tau, \un}G$ which is induced from
the action $\id_A\otimes \sigma_g$ of $g\in G$ on $A\otimes C_0(G/N)$, where
$\sigma_g$ denotes the right translation action on $C_0(G/N)$. Note that
$\beta_g$ exists since $\id_A\otimes \sigma_g$ commutes with $\alpha\otimes \tau$.
It follows then from functoriality of $\rtimes_{\mu}$ that $\beta_g$ factors through an
automorphism of $(A\otimes C_0(G/N))\rtimes_{\alpha\otimes \tau, \mu}G$. In particular, the
kernel $K_\mu$ of the quotient map $(A\otimes C_0(G/N))\rtimes_{\alpha\otimes\tau, \un}G\to (A\otimes C_0(G/N))\rtimes_{\alpha\otimes\tau, \mu}G$
is $\beta_g$-invariant.
Now it is part of  \cite[Theorem 1]{Echterhoff:Morita_twisted} that there is $\beta_g-\gamma_g^\un$-compatible
automorphism $\nu_g$ of $X(A,\alpha)$.
Since $J_\mu$ corresponds to $K_\mu$ via the Rieffel correspondence for the bimodule $X(A,\alpha)$,
it follows that $\beta_g(K_\mu)$ corresponds to $\gamma_g^\un(J_\mu)$ by Rieffel correspondence.
Since this correspondence is one-to-one, and since $K_\mu=\beta_g(K_\mu)$ we
also get $J_\mu=\gamma_g^\mu(J_\mu)$. Thus $\gamma_g^\un$ factors through
an automorphism $\gamma_g^\mu$ of $A\rtimes_{\alpha,\mu|_N}N$.
\end{proof}

 We now turn to the problem of showing that the restriction $\rtimes_{\E|_N}$ of the
 minimal exact correspondence functor $\rtimes_\E:=\rtimes_{\E^G_\Cor}$ to any closed
 normal subgroup $N$ of $G$ coincides with the minimal exact correspondence functor
 $\rtimes_{\E^N_\Cor}$ for $N$. We need

\begin{lemma}\label{lem-include}
Let $H$ be a closed subgroup of $G$ and $\rtimes_\nu$ a crossed-product functor on $H$.
Then, for any $G$-algebra $(A,\alpha)$, the canonical mapping
$i_A\rtimes i_G|_H: A\rtimes_\un H\to \M(A\rtimes_{\un}G)$ factors to a well-defined
\Star{}homomorphism
$$i_A^{\ext\nu}\rtimes i_G^{\ext\nu}|_H: A\rtimes_\nu H\to \M(A\rtimes_{\ext\nu}G).$$
If $H$ is open in $G$, this map takes its image in $A\rtimes_{\ext\nu}G$.
\end{lemma}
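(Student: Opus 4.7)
The plan is to use the defining property of $\rtimes_{\ext\nu}$ in terms of covariant pairs and to separate points by faithful nondegenerate representations.

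First I would observe that the pair $(i_A, i_G|_H)$ is a nondegenerate covariant pair for $(A,H,\alpha|_H)$ with values in $\M(A\rtimes_{\alpha,\un}G)$: the map $i_A\colon A\to \M(A\rtimes_{\alpha,\un}G)$ is a nondegenerate \Star{}homomorphism, the restriction $i_G|_H\colon H\to \U\M(A\rtimes_{\alpha,\un}G)$ is strictly continuous (as $H$ carries the subspace topology), and the covariance relation for $(A,H)$ is inherited from the covariance relation for $(A,G)$. Regarding $A\rtimes_{\alpha,\un}G$ as a Hilbert module over itself, the universal property of $A\rtimes_{\alpha,\un}H$ yields the integrated \Star{}homomorphism $i_A\rtimes i_G|_H\colon A\rtimes_{\alpha,\un}H\to \M(A\rtimes_{\alpha,\un}G)$ given on $C_c(H,A)$ by the usual integration formula.

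Next, let $q_{\ext\nu}\colon A\rtimes_{\alpha,\un}G\to A\rtimes_{\alpha,\ext\nu}G$ denote the quotient map and $\tilde q_{\ext\nu}$ its strict extension to multipliers. I claim that $\tilde q_{\ext\nu}\circ (i_A\rtimes i_G|_H)$ annihilates the kernel $J_\nu\defeq \ker(A\rtimes_{\alpha,\un}H\onto A\rtimes_{\alpha,\nu}H)$. To verify this, choose a faithful nondegenerate representation $\sigma\colon A\rtimes_{\alpha,\ext\nu}G\to \Lb(\H)$, whose strict extension $\tilde\sigma$ is faithful on $\M(A\rtimes_{\alpha,\ext\nu}G)$. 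The composition $\sigma\circ q_{\ext\nu}$ is the integrated form of the covariant pair $(\pi,u)\defeq(\tilde\sigma\circ \tilde q_{\ext\nu}\circ i_A,\,\tilde\sigma\circ \tilde q_{\ext\nu}\circ i_G)$, and this pair factors through $A\rtimes_{\alpha,\ext\nu}G$. By the very definition of $\rtimes_{\ext\nu}$ (Definition~\ref{def-ext}), the restricted pair $(\pi, u|_H)$ then integrates to a representation of $A\rtimes_{\alpha,\nu}H$. But the integrated form of $(\pi,u|_H)$ on $A\rtimes_{\alpha,\un}H$ coincides by construction with $\tilde\sigma\circ \tilde q_{\ext\nu}\circ (i_A\rtimes i_G|_H)$. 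Hence this composition vanishes on $J_\nu$; faithfulness of $\tilde\sigma$ then gives $\tilde q_{\ext\nu}\circ (i_A\rtimes i_G|_H)|_{J_\nu}=0$, producing the required factorisation $i_A^{\ext\nu}\rtimes i_G^{\ext\nu}|_H\colon A\rtimes_{\alpha,\nu}H\to \M(A\rtimes_{\alpha,\ext\nu}G)$.

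For the final assertion, suppose $H$ is open in $G$. Then the Haar measure on $H$ may be taken as the restriction of Haar measure on $G$, and extension by zero gives an inclusion $C_c(H,A)\hookrightarrow C_c(G,A)$ that is compatible with the convolution products (since the integrals defining the convolutions on $C_c(H,A)$ and $C_c(G,A)$ agree on such functions). Under this identification, the integration formula for $i_A\rtimes i_G|_H$ on $f\in C_c(H,A)$ coincides with the inclusion $f\mapsto f$ into $C_c(G,A)\subseteq A\rtimes_{\alpha,\un}G$. Applying $q_{\ext\nu}$ and passing to the factored map $i_A^{\ext\nu}\rtimes i_G^{\ext\nu}|_H$, we find that its image on the dense subspace $C_c(H,A)$ lies inside $A\rtimes_{\alpha,\ext\nu}G$, hence by continuity its image lies in $A\rtimes_{\alpha,\ext\nu}G$.

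The main obstacle, such as it is, lies in the factorisation step: one must be careful that the strict extension $\tilde q_{\ext\nu}$ behaves well with respect to the integrated form, i.e.\ that integrating a covariant pair into $\M(A\rtimes_{\alpha,\ext\nu}G)$ coincides with composing the integrated form into $\M(A\rtimes_{\alpha,\un}G)$ with $\tilde q_{\ext\nu}$. This is standard given nondegeneracy of $q_{\ext\nu}$ (which follows from $C_c(G,A)$ being dense in both crossed products), but is the point where one must be formally careful.
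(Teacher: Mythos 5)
Your argument is correct and is essentially the (unwritten) argument the paper intends: the paper's own proof of this lemma is a single sentence asserting that everything follows from Definition~\ref{def-ext}, and your proposal supplies the missing details in the natural way. The one point worth flagging is that your appeal to ``the very definition'' for an \emph{arbitrary} faithful $\sigma$ really uses the converse of what Definition~\ref{def-ext} literally says --- namely that \emph{every} covariant pair whose integrated form kills $J_\nu$ restricts to a representation of $A\rtimes_{\alpha,\nu}H$, not merely that $J_\nu$ is the common kernel of those pairs that do; this converse is true (and is how the paper itself reads its definition, e.g.\ in the proof of Theorem~\ref{thm-ext}), but it requires a weak-containment argument, and you can sidestep it entirely by taking $\sigma$ to be the direct sum of the admissible covariant pairs occurring in the definition of $J_\nu$, which is a faithful nondegenerate representation of $A\rtimes_{\alpha,\ext\nu}G$ whose restriction to $H$ manifestly factors through $A\rtimes_{\alpha,\nu}H$.
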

\begin{proof} It follows from the definition of $\rtimes_{\ext\nu}$ that
$i_A^{\ext\nu}\rtimes i_G^{\ext\nu}|_H$ is well defined and it
is clear that it takes image in $A\rtimes_{\ext\nu}G$ if $H$ is open in $G$.
\end{proof}

Just for the records we show as a corollary of this lemma and Theorem \ref{thm-ext} that the minimal exact correspondence
functor $\rtimes_{\E_\Cor}$ enjoys the following property for arbitrary closed subgroups which is
well-known for the maximal and reduced crossed-product functors:

\begin{corollary}\label{subgroups}
Let $H$ be a closed subgroup of $G$, and let $(A,\alpha)$ be a $G$-\cstar{}algebra.
Then the canonical mapping
$i_A\rtimes i_G|_H\colon A\rtimes_\un H\to \M(A\rtimes_{\un}G)$ factors to a well-defined \Star{}homomorphism
$$i_A^{\E_\Cor}\rtimes i_G^{\E_\Cor}|_H\colon A\rtimes_{\E_\Cor^H} H\to \M(A\rtimes_{\E_\Cor^G}G).$$
\end{corollary}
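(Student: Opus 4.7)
The plan is to combine Lemma~\ref{lem-include} (which gives such a multiplier-valued map for any $H$-functor and its extension to $G$) with the minimality of $\rtimes_{\E_\Cor^G}$ among exact correspondence functors on $G$, using the fact that Theorem~\ref{thm-ext} preserves the correspondence and exactness properties under extension from $H$ to $G$.

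More precisely, I would proceed as follows. First, apply Lemma~\ref{lem-include} with $\nu = \E_\Cor^H$: this produces a \Star{}homomorphism
$$
i_A^{\ext\E_\Cor^H}\rtimes i_G^{\ext\E_\Cor^H}|_H \colon A\rtimes_{\E_\Cor^H}H \longrightarrow \M\bigl(A\rtimes_{\ext\E_\Cor^H}G\bigr),
$$
which on $C_c(H,A)$ agrees with the composition of the universal inclusion $C_c(H,A)\to\M(A\rtimes_\un G)$ with the quotient to multipliers of $A\rtimes_{\ext\E_\Cor^H}G$. Next, by Theorem~\ref{thm-ext} parts (3) and (4), the extended functor $\rtimes_{\ext\E_\Cor^H}$ is again an exact correspondence functor, now on $G$. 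Hence, by the minimality property characterising $\rtimes_{\E_\Cor^G}$ in Theorem~\ref{min cor}, the identity on $C_c(G,A)$ descends to a surjective \Star{}homomorphism
$$
q \colon A\rtimes_{\ext\E_\Cor^H}G \twoheadrightarrow A\rtimes_{\E_\Cor^G}G.
$$
Any surjective \Star{}homomorphism of C*-algebras extends uniquely to a strictly continuous, unital \Star{}homo\-morphism $\overline{q}\colon \M(A\rtimes_{\ext\E_\Cor^H}G)\to \M(A\rtimes_{\E_\Cor^G}G)$, so we may define
$$
i_A^{\E_\Cor}\rtimes i_G^{\E_\Cor}|_H \defeq \overline{q}\circ \bigl(i_A^{\ext\E_\Cor^H}\rtimes i_G^{\ext\E_\Cor^H}|_H\bigr).
$$
Finally, I would verify that this map agrees with the claimed factorisation of $i_A\rtimes i_G|_H$ by checking on the dense subalgebra $C_c(H,A)\subseteq A\rtimes_{\E_\Cor^H}H$, where every step of the construction is literally induced by the identity on functions and thus commutes with the universal maps $i_A\rtimes i_G$ by construction.

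There is no real obstacle here; the only point that requires a moment's care is the strict-to-strict extension of the surjection $q$ to multiplier algebras, which is standard. The conceptual content of the argument is entirely supplied by Lemma~\ref{lem-include} and by the preservation of the exact correspondence property under extension (Theorem~\ref{thm-ext}(3)--(4)), combined with the minimality statement in Theorem~\ref{min cor}.
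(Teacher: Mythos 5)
Your proposal is correct and follows essentially the same route as the paper: Lemma~\ref{lem-include} applied with $\nu=\E_\Cor^H$, combined with the fact from Theorem~\ref{thm-ext} that $\rtimes_{\ext\E_\Cor^H}$ is an exact correspondence functor and therefore dominates $\rtimes_{\E_\Cor^G}$, yielding the quotient map through which one composes. The paper states this in one sentence; your version merely makes the standard extension of the surjection to multiplier algebras and the check on $C_c(H,A)$ explicit.
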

\begin{proof} This follows from Lemma \ref{lem-include} together with the fact (shown in Theorem \ref{thm-ext}) that
the extension $\rtimes_{\ext\E_\Cor^H}$ to $G$ is an exact correspondence functor and
hence dominates $\rtimes_{\E_\Cor^G}$.
\end{proof}

\begin{remark}
It is not clear whether the homomorphism
$$i_A^{\E_\Cor}\rtimes i_G^{\E_\Cor}|_H\colon A\rtimes_{\E_\Cor^H} H\to \M(A\rtimes_{\E_\Cor^G}G)$$
 is injective in general, even if $H$ is open in $G$.
\end{remark}

Suppose now that $\rtimes_\mu$ is a crossed-product functor for $G$
and let $N$ be a closed normal
subgroup of $G$. Then, if $(A,\alpha)$ is a $G$-algebra, we get a canonical \Star{}homomorphism
\begin{equation}\label{eq-homN}
i_A^\mu\rtimes i_G^\mu|_N: A\rtimes_{\alpha,\un}N\to \M((A\otimes C_0(G/N))\rtimes_{\alpha\otimes\tau, \mu}G),
\end{equation}
given as the composition
$$q_\mu\circ \big((\id_A\otimes 1_{C_0(G/N)})\rtimes_\un G\big)\circ (i_A^\un\rtimes i_G^\un|_N),$$
with
$i_A^\un\rtimes i_G^\un|_N: A\rtimes_{\alpha,\un}N\to \M(A\rtimes_{\alpha,\un}G)$ the canonical homomorphism,
\linebreak
{$(\id_A\otimes 1_{C_0(G/N)})\rtimes_\un G: A\rtimes_{\alpha,\un}G\to \M((A\otimes C_0(G/N))\rtimes_{\alpha\otimes\tau, \un}G)$}
induced from $\id_A\otimes 1_{C_0(G/N)}:  A\to \M(A\otimes C_0(G/N))$ by
functoriality for generalized homomorphism of $\rtimes_\un$, and
$q_\mu: (A\otimes C_0(G/N))\rtimes_{\alpha\otimes\tau, \un}G\to (A\otimes C_0(G/N))\rtimes_{\alpha\otimes\tau, \mu}G$
the quotient map.

\begin{lemma}\label{lem-normal2}
The homomorphism of (\ref{eq-homN}) factors to a faithful \Star{}homomorphism
$$i_A^\mu\rtimes i_G^\mu|_N: A\rtimes_{\alpha,\mu|_N}N\to \M((A\otimes C_0(G/N))\rtimes_{\alpha\otimes\tau, \mu}G).$$
\end{lemma}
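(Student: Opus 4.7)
The idea is to analyze the map $\phi_\mu\defeq i_A^\mu\rtimes i_G^\mu|_N\colon A\rtimes_{\alpha,\un}N\to\M(C_\mu)$, where $C_\mu\defeq(A\otimes C_0(G/N))\rtimes_{\alpha\otimes\tau,\mu}G$, by composing it with arbitrary nondegenerate representations of $C_\mu$. Given such a representation $\rho$ on a Hilbert space $\H$, lifting through $q_\mu$ yields a covariant pair $(\Pi,U)$ of $(A\otimes C_0(G/N),G)$ which factors through $C_\mu$. Green's imprimitivity theorem (see~\cite[Appendix~B]{Echterhoff-Kaliszewski-Quigg-Raeburn:Categorical}) presents $(\Pi,U)$ as the induced representation $\ind_N^G(\pi,u)$ of some covariant pair $(\pi,u)$ of $(A,N)$ on a Hilbert space $\H_\pi$, and the Rieffel correspondence on which Definition~\ref{def-mures} is built forces $(\pi,u)$ to factor through $A\rtimes_{\alpha,\mu|_N}N$.

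Realising the induced representation on $L^2(G/N,\H_\pi)$ via a Borel section of $G\to G/N$ and unwinding Green's induction formulae, a direct calculation shows that for $b\in A\rtimes_{\alpha,\un}N$ the operator $\rho(\phi_\mu(b))$ acts on $L^2(G/N,\H_\pi)$ as pointwise multiplication by the operator-valued function
$$
\dot g\;\longmapsto\;(\pi\rtimes u)\bigl(\gamma^\un_{g^{-1}}(b)\bigr),
$$
where $\gamma^\un$ is the $G$-action on $A\rtimes_{\alpha,\un}N$ from Lemma~\ref{lem-normal1}; the Radon-Nikodym factor built into $\gamma^\un$ is exactly what is needed to absorb the Haar-measure corrections appearing in Green's induction.

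Both halves of the lemma now follow from the $\gamma^\un$-invariance of $J_{\alpha,\mu|_N}$ established in Lemma~\ref{lem-normal1}. If $b\in J_{\alpha,\mu|_N}$, then $\gamma^\un_{g^{-1}}(b)\in J_{\alpha,\mu|_N}$ for every $g\in G$, and since $(\pi,u)$ factors through $A\rtimes_{\alpha,\mu|_N}N$ we get $(\pi\rtimes u)(\gamma^\un_{g^{-1}}(b))=0$; hence $\rho(\phi_\mu(b))=0$, and as $\rho$ was arbitrary $\phi_\mu(b)=0$, which gives the factorization through $A\rtimes_{\alpha,\mu|_N}N$. For injectivity take $\rho$ faithful on $C_\mu$; then $(\pi,u)$ is faithful on $A\rtimes_{\alpha,\mu|_N}N$. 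If the descended $\phi_\mu(b)=0$, the operator-valued function above vanishes almost everywhere on $G/N$; strong continuity of $g\mapsto\gamma^\un_g(b)$ (Lemma~\ref{lem-normal1}) and of $\pi\rtimes u$ promote this to pointwise vanishing, so evaluating at $g=e$ gives $(\pi\rtimes u)(b)=0$, and faithfulness of $(\pi,u)$ on $A\rtimes_{\alpha,\mu|_N}N$ yields $b\in J_{\alpha,\mu|_N}$.

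The main obstacle is the central identification in the second paragraph: one has to unwind Green's explicit induction formula on $L^2(G/N,\H_\pi)$ and match the resulting twists (by $\alpha_{g^{-1}}$ on $A$, by $C_{g^{-1}}$ on $N$, and by the modular comparisons between the Haar measures of $N$ and of $G$) with the definition of $\gamma^\un$ from Lemma~\ref{lem-normal1}. Once this bookkeeping is done and the pointwise operator is identified with $(\pi\rtimes u)(\gamma^\un_{g^{-1}}(b))$, the $\gamma^\un$-invariance of $J_{\alpha,\mu|_N}$ delivers both the factorization and the faithfulness in a single stroke.
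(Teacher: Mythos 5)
Your proof is correct and follows essentially the same route as the paper: both realise the map \eqref{eq-homN} as the restriction to $A\rtimes_{\alpha,\un}N$ of a Green-induced representation, identify its kernel with $\bigcap_{g\in G}\gamma^{\un}_g(J_\mu)$, and then invoke the $\gamma^{\un}$-invariance of $J_\mu$ supplied by Lemma~\ref{lem-normal1} to conclude that this intersection equals $J_\mu$. The only difference is one of packaging: the explicit $L^2(G/N,\H_\pi)$ computation you single out as the main obstacle is precisely the content of \cite[Proposition~9.15]{Echterhoff:Crossed}, which the paper simply cites instead of rederiving.
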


\begin{proof}
Let $\rho\rtimes v$ be a covariant representation of $A\rtimes_{\alpha,\un}N$ which factors through a faithful representation of $A\rtimes_{\alpha,\mu|_N}N$.
Then the induced representation $\Ind_N^G(\rho\rtimes v)$ of $(A,G,\alpha)$ can be defined by first inducing $\rho\rtimes v$ to $(A\otimes C_0(G/N))\rtimes_{\alpha\otimes\tau, \un}G$ via Green's imprimitivity module $X(A,\alpha)$, which by assumption factors to a faithful representation $\Ind^{X(A,\alpha)}(\rho\rtimes v)$ of $(A\otimes C_0(G/N))\rtimes_{\alpha\otimes\tau, \mu}G$,
and then composing this representation with the canonical embedding
$$(\id_A\otimes 1_{C_0(G/N)})\rtimes_\un G: A\rtimes_{\alpha,\un}G\to \M((A\otimes C_0(G/N))\rtimes_{\alpha\otimes\tau, \un}G).$$
(e.g., see \cite[\S 6 and \S 9]{Echterhoff:Crossed} and in particular \cite[Remark 9.5]{Echterhoff:Crossed} for more details).
The restriction $\Res_N^G(\Ind_N^G(\rho\rtimes v))$ of $\Ind_N^G(\rho\rtimes v)$ to $A\rtimes_{\alpha,\un}N$
is then given by the composition $\Ind^{X(A,\alpha)}(\rho\rtimes v)\circ (\id_A\otimes 1)\rtimes_\un G\circ (i_A^\un\rtimes i_G^\un|_N)$.
Since $\Ind^{X(A,\alpha)}(\rho\rtimes v)$ factors through a faithful representation of
$(A\otimes C_0(G/N))\rtimes_{\alpha\otimes\tau, \mu}G$, we may identify it with the quotient map
$q_\mu$, and it follows that
$I_\mu:=\ker \big(\Res_N^G(\Ind_N^G(\rho\rtimes v))\big)$
coincides with the kernel
of (\ref{eq-homN}).
On the other hand, if $J_\mu:=\ker(\rho\rtimes v)=\ker\big(A\rtimes_{\alpha,\un}N\to A\rtimes_{\alpha,\mu|_N}N\big)$,
then by \cite[Proposition 9.15]{Echterhoff:Crossed} we get
\begin{align*}
I_\mu&=\ker(\Res_N^G(\Ind_N^G(\rho\rtimes v)))=\bigcap\{\gamma_g^\un(\ker(\rho\rtimes v)): g\in G\}\\
&=\bigcap\{\gamma_g^\un(J_\mu): g\in G\},
\end{align*}
where $\gamma_g^\un:G\to \Aut(A\rtimes_{\alpha,\un}N)$ is the decomposition action of $G$ on $A\rtimes_{\alpha,\un}N$.
Since this action factors through $A\rtimes_{\alpha,\mu|_N}N$ by Lemma \ref{lem-normal1}, it follows that
$\gamma_g^\un(J_\mu)=J_\mu$ for all $g\in G$. Thus  $I_\mu=J_\mu$. Since $I_\mu$ is the kernel of (\ref{eq-homN}),
the result follows.
\end{proof}

Recall that a $C_0(X)$-algebra is a \cstar{}algebra $A$ equipped with a nondegenerate
\Star{}homomorphism $\varphi:C_0(X)\to \ZM(A)$, where $X$ is a locally compact space and $\ZM(A)$ denotes the
centre of the multiplier algebra of $A$. Then, for each $x\in X$, the fibre $A_x$ of $A$ over $x$ is defined as the
quotient $A_x:=A/I_x$ with $I_x:=\varphi(C_0(X\setminus\{x\}))A$. For each $a\in A$ we get an assignment
$x\mapsto a_x:=a+I_x\in A_x$, hence we may regard $A$ as an algebra of sections of a bundle $\mathcal A$
of \cstar{}algebras over $X$ with fibres $A_x$.
An action $\alpha:G\to \Aut(A)$ is called {\em fibre-wise}, if $\alpha_g(I_x)=I_x$ for all $g\in G, x\in X$ and hence
the action induces actions $\alpha^x:G\to \Aut(A_x)$ for each $x\in X$ in a canonical way. Note that
$\alpha$ being fibre-wise is equivalent to $\alpha$ being $C_0(X)$-linear in the sense that $\alpha_g(\varphi(f)a)=\varphi(f)\alpha_g(a)$
for all $g\in G$ and $f\in C_0(X)$. For more information on $C_0(X)$-algebras we refer to \cite[Appendix C]{Williams:Crossed}.

\begin{lemma}\label{lem-field}
Suppose that $\alpha:G\to \Aut(A)$ is a fibre-wise action of $G$ on the $C_0(X)$-algebra $A$
with structure map $\varphi:C_0(X)\to \ZM(A)$. Suppose further that $\rtimes_\mu$ is an exact crossed-product functor for $G$.
Then $A\rtimes_{\alpha,\mu}G$ is a $C_0(X)$-algebra with structure map
$i_A\circ \varphi: C_0(X)\to \ZM(A\rtimes_{\alpha,\mu}G)$ and fibres $A_x\rtimes_{\alpha^x,\mu}G$.
\end{lemma}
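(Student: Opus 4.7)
The plan is to verify that $i_A\circ \varphi$ gives a well-defined $C_0(X)$-algebra structure, and then identify the fibre over each $x\in X$ by exploiting exactness of $\rtimes_\mu$ applied to the defining short exact sequence of the fibre.

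First, I would check that $\bar{\imath}_A\circ \varphi\colon C_0(X)\to \M(A\rtimes_{\alpha,\mu}G)$ (the obvious extension to multipliers) lands in the centre and is nondegenerate. Centrality is immediate: $\varphi(C_0(X))\subseteq \ZM(A)$, so $\bar{\imath}_A\circ \varphi$ commutes with the image of $i_A$; and the fibre-wise hypothesis means $\alpha_g$ fixes $\varphi(h)$ in $\M(A)$ for every $g\in G$ and $h\in C_0(X)$, so $i_G(g)\bar{\imath}_A(\varphi(h))i_G(g)^*=\bar{\imath}_A(\alpha_g(\varphi(h)))=\bar{\imath}_A(\varphi(h))$. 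Nondegeneracy then follows from nondegeneracy of $\varphi$ together with the fact that $i_A(A)\cdot i_G(C_c(G))$ spans a dense subspace of $A\rtimes_{\alpha,\mu}G$.

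Next, fix $x\in X$ and set $J_x\defeq \varphi(C_0(X\setminus\{x\}))A=I_x$. Because $\alpha$ is fibre-wise, $J_x$ is a $G$-invariant closed ideal with $G$-equivariant quotient $A/J_x\cong A_x$ equipped precisely with the action $\alpha^x$. Since $\rtimes_\mu$ is exact (and in particular has the ideal property), applying the functor to the short exact sequence $0\to J_x\to A\to A_x\to 0$ yields
\[
0\longrightarrow J_x\rtimes_{\alpha,\mu}G\longrightarrow A\rtimes_{\alpha,\mu}G\longrightarrow A_x\rtimes_{\alpha^x,\mu}G\longrightarrow 0.
\]

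The main remaining step — and the only one requiring real work — is to identify the image of $J_x\rtimes_{\alpha,\mu}G$ in $A\rtimes_{\alpha,\mu}G$ with the fibre ideal $\bar{\imath}_A(\varphi(C_0(X\setminus\{x\})))(A\rtimes_{\alpha,\mu}G)$ coming from the $C_0(X)$-algebra structure defined in the previous paragraph. For this I would show both inclusions directly on the dense subalgebra $C_c(G,A)$: any $f\in C_c(G,J_x)$ can be approximated using an approximate unit $(\varphi(e_\lambda))\subseteq \varphi(C_0(X\setminus\{x\}))$ of $J_x$ by elements $\varphi(e_\lambda)\cdot f$, which manifestly lie in $\bar{\imath}_A(\varphi(C_0(X\setminus\{x\})))(A\rtimes_{\alpha,\mu}G)$ after using that the left action of $\bar{\imath}_A(\varphi(h))$ on $f\in C_c(G,A)$ is pointwise multiplication $s\mapsto \varphi(h)f(s)$ (which is in turn a valid computation because $\varphi(h)$ is central in $\M(A)$). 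Conversely, for the same reason $\bar{\imath}_A(\varphi(h))\cdot f$ lies in $C_c(G,J_x)\subseteq J_x\rtimes_{\alpha,\mu}G$ whenever $h\in C_0(X\setminus\{x\})$ and $f\in C_c(G,A)$. Once these two ideals are identified, the quotient in the short exact sequence above is exactly the fibre of $A\rtimes_{\alpha,\mu}G$ over $x$, so the fibre is $A_x\rtimes_{\alpha^x,\mu}G$ as claimed. The hardest part is really just the book-keeping in this last identification; the exactness assumption does all the heavy lifting.
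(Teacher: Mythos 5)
Your proof is correct and follows essentially the same route as the paper: verify centrality and nondegeneracy of $i_A\circ\varphi$ on the generators, and then use exactness (hence the ideal property) to identify $\bar{\imath}_A(\varphi(C_0(X\setminus\{x\})))(A\rtimes_{\alpha,\mu}G)$ with the image of $I_x\rtimes_{\alpha,\mu}G$ via the common dense subalgebra $C_c(G,I_x)$, so that the fibre quotient becomes $A_x\rtimes_{\alpha^x,\mu}G$. No gaps.
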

\begin{proof}
As a composition of nondegenerate homomorphisms, $i_A\circ \varphi$ is well defined and nondegenerate, and
one easily checks on the generators of $A\rtimes_{\alpha,\mu}G$ that it maps $C_0(X)$
into $\ZM(A\rtimes_{\alpha,\mu}G)$. To see that $(A\rtimes_{\alpha,\mu}G)_x\cong A_x\rtimes_{\alpha^x,\mu}G$ for all $x\in X$
we first observe that for all open $U\subseteq X$ we get
$$i_A\circ \varphi\big(C_0(U)\big)(A\rtimes_{\mu}G)=(\varphi(C_0(U))A)\rtimes_{\mu}G,$$
 since by the ideal property (which follows
from exactness) both coincide with the closure of $C_c(G, \varphi(C_0(U))A)$ inside $A\rtimes_\mu G$.
Hence, if $U=X\setminus\{x\}$ and $I_x:=\varphi(C_0(X\setminus\{x\}))A$, it follows from exactness of $\rtimes_{\mu}$ that
$$A_x\rtimes_{\alpha^x,\mu}G=(A\rtimes_{\alpha,\mu} G)/(I_x\rtimes_{\alpha,\mu} G)=(A\rtimes_{\alpha,\mu} G)_x.$$
\end{proof}

\begin{remark}
The above lemma is not true for non-exact crossed products in general. Indeed, it is not true for the reduced crossed
product of a  non-exact group $G$ by \cite[Theorem on  p.~170]{Kirchberg-Wassermann}.
\end{remark}

As a corollary of Lemma~\ref{lem-field}, we get

\begin{lemma}\label{lem-fibre}
Suppose that $\rtimes_{\mu_i}$, $i=1,2$, are two exact crossed-product functors for $G$.
Suppose further that $A$ is a $C_0(X)$-algebra equipped with a fibre-wise action
$\alpha:G\to\Aut(A)$
such hat $A\rtimes_{\alpha,\mu_1}G= A\rtimes_{\alpha, \mu_2}G$.
Then $A_x\rtimes_{\alpha^x,\mu_1}G=A_x\rtimes_{\alpha^x,\mu_2}G$ for all $x\in X$.
\end{lemma}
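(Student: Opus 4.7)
The plan is to deduce this directly from Lemma \ref{lem-field} by observing that the given identification of the two crossed products must respect the $C_0(X)$-algebra structure, and that the fibres at each $x\in X$ are computed by that structure.

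First I would apply Lemma \ref{lem-field} to both functors $\rtimes_{\mu_1}$ and $\rtimes_{\mu_2}$: since both are exact and $\alpha$ is fibre-wise, each $B_i \defeq A\rtimes_{\alpha,\mu_i}G$ is a $C_0(X)$-algebra with structure map
$$\Phi_i \defeq i_A^{\mu_i}\circ\varphi\colon C_0(X)\to\ZM(B_i)$$
and with fibre $(B_i)_x \cong A_x\rtimes_{\alpha^x,\mu_i}G$ at each $x\in X$. This is the content of Lemma \ref{lem-field} applied twice.

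Next I would argue that the canonical isomorphism $\Psi\colon B_1\congto B_2$ (which by hypothesis extends the identity on $C_c(G,A)$) carries $\Phi_1$ onto $\Phi_2$. For this, note that for any $f\in C_0(X)$ and any $\xi\in C_c(G,A)$ the element $\Phi_i(f)\cdot\xi\in C_c(G,A)$ is computed inside $C_c(G,A)$ by the same formula $s\mapsto\varphi(f)\xi(s)$ independently of $i$. Since $\Psi$ is an isomorphism of $\Cst$-algebras, its extension $\widetilde\Psi\colon\M(B_1)\to\M(B_2)$ is determined by its action on $B_1$ via $\widetilde\Psi(m)\Psi(b)=\Psi(mb)$; applied to $m=\Phi_1(f)$ and $b\in C_c(G,A)$ this gives $\widetilde\Psi(\Phi_1(f))\,\xi = \Phi_2(f)\,\xi$ for all $\xi\in C_c(G,A)$. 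By density, $\widetilde\Psi\circ\Phi_1=\Phi_2$.

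It follows that for each $x\in X$ the ideals
$$J^i_x\defeq \Phi_i(C_0(X\setminus\{x\}))\,B_i$$
correspond under $\Psi$, i.e.\ $\Psi(J^1_x)=J^2_x$. Hence $\Psi$ descends to an isomorphism of quotients $(B_1)_x\congto (B_2)_x$ which is again induced by the identity map on $C_c(G,A_x)$ (since $C_c(G,A)\to C_c(G,A_x)$ is surjective and $\Psi$ is the identity on $C_c(G,A)$). Combining with the identification of fibres from Lemma \ref{lem-field} gives the claimed equality $A_x\rtimes_{\alpha^x,\mu_1}G = A_x\rtimes_{\alpha^x,\mu_2}G$.

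The only mildly delicate step is the verification that $\widetilde\Psi\circ\Phi_1=\Phi_2$ at the level of multiplier algebras, but this is a routine consequence of strict continuity and the fact that $\Psi$ is the identity on the common dense subalgebra $C_c(G,A)$, so I do not expect any serious obstacle.
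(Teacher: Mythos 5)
Your proposal is correct and follows essentially the same route as the paper: apply Lemma \ref{lem-field} to both functors and observe that the canonical isomorphism $A\rtimes_{\alpha,\mu_1}G\congto A\rtimes_{\alpha,\mu_2}G$ is $C_0(X)$-linear (which you verify on the dense subalgebra $C_c(G,A)$), hence matches up the fibre ideals and descends to the identification of fibres. The paper simply states this as ``a $C_0(X)$-linear isomorphism between two $C_0(X)$-algebras always induces isomorphisms of the fibres''; your write-up supplies the routine details.
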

\begin{proof} This follows from Lemma \ref{lem-field} together with the fact that
a $C_0(X)$-linear isomorphism between two $C_0(X)$-algebras always induces
isomorphisms of the fibres.
\end{proof}

We want to apply the above result to the following example: Suppose that $N\subseteq G$ is a closed normal subgroup of $G$ and
let $(B,\beta)$ be an $N$-\cstar{}algebra. Then the induced $G$-algebra $\Ind_N^GB$ becomes a $C_0(G/N)$-algebra with respect to the nondegenerate \Star{}homomorphism $\varphi: C_0(G/N)\to \ZM(\Ind_N^GB)$ given by
$$\big(\varphi(f) F\big)(g)=f(gN)F(g),\quad\forall f\in C_0(G/N), F\in \Ind_N^GB, g\in G.$$
Then each fibre $(\Ind_N^G B)_{gN}$ identifies with $B$ via the evaluation map $F\mapsto F(g)$.
It is easy to check that the restriction of
the $G$-action $\Ind\beta$ to $N$ is $C_0(G/N)$-linear, hence fibre-wise.
For the unit fibre $eN$, it follows that the $N$-action
on the fibre $B\cong (\Ind_N^GB)_{eN}$ coincides with
the action $\beta:N\to \Aut(B)$ we started with. This follows from the equation
$$(\Ind\beta_n(F))(e)=F(n^{-1})=\beta_n(F(e)),\quad \forall F\in \Ind_N^GB.$$
Using this, we get

\begin{lemma}\label{lem-normal3}
Suppose that $N$ is a closed normal subgroup of $G$ and that $\rtimes_{\nu_i}$, $i=1,2$, are
two exact crossed-product functors for $N$. Suppose further that $\rtimes_{\nu_1}=\rtimes_{\nu_2}$
when restricted to the category of $G$-\cstar{}algebras. Then $\rtimes_{\nu_1}=\rtimes_{\nu_2}$.
\end{lemma}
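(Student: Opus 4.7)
The plan is to encode an arbitrary $N$-algebra $(B,\beta)$ as the unit fibre of an induced $G$-algebra, then invoke Lemma~\ref{lem-fibre} to pass the equality of crossed products on $G$-algebras down to the fibre $B$.

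First I would form the $G$-algebra $\Ind_N^G(B,\beta)$ and equip it with its canonical $C_0(G/N)$-algebra structure, as described just before the statement. As noted in the paragraph preceding the lemma, evaluation at $e$ identifies the fibre $(\Ind_N^GB)_{eN}$ with $B$, and the restriction of $\Ind\beta$ to $N$ induces on this fibre exactly the original action $\beta$. So it is enough to show that the two $N$-crossed products $\Ind_N^G B\rtimes_{\Ind\beta|_N,\nu_i}N$, $i=1,2$, coincide in a way that respects the $C_0(G/N)$-structure, and then apply Lemma~\ref{lem-fibre} at $x=eN$.

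The equality of the two $N$-crossed products will come for free from the hypothesis: $\Ind_N^G B$ is a $G$-algebra, and $\rtimes_{\nu_1}=\rtimes_{\nu_2}$ on the category of $G$-algebras (viewed as $N$-algebras by restriction). For Lemma~\ref{lem-fibre} to apply I must verify that the restricted action $\Ind\beta|_N$ is fibre-wise, i.e.\ $C_0(G/N)$-linear. But this is precisely where normality of $N$ enters: for $n\in N$, $f\in C_0(G/N)$ and $F\in \Ind_N^GB$,
\[
(\Ind\beta_n(f\cdot F))(g)=(f\cdot F)(n^{-1}g)=f(n^{-1}gN)F(n^{-1}g)=f(gN)F(n^{-1}g)=(f\cdot \Ind\beta_n(F))(g),
\]
using $n^{-1}gN=gN$ because $N$ is normal. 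Hence $\Ind\beta_n$ commutes with multiplication by $\varphi(C_0(G/N))$, and $\Ind\beta|_N$ is fibre-wise.

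With both functors exact by hypothesis and the action fibre-wise, Lemma~\ref{lem-fibre} gives
\[
(\Ind_N^G B)_{eN}\rtimes_{(\Ind\beta|_N)^{eN},\nu_1}N=(\Ind_N^G B)_{eN}\rtimes_{(\Ind\beta|_N)^{eN},\nu_2}N,
\]
which under the identification $((\Ind_N^G B)_{eN},(\Ind\beta|_N)^{eN})\cong (B,\beta)$ yields $B\rtimes_{\beta,\nu_1}N=B\rtimes_{\beta,\nu_2}N$, as required. The only real point of care is the fibre-wise check above, which uses normality in an essential way; the rest is bookkeeping with results already established in this section.
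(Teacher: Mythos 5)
Your proof is correct and follows essentially the same route as the paper: induce $(B,\beta)$ up to the $G$-algebra $\Ind_N^GB$ with its canonical $C_0(G/N)$-structure, observe that the restricted action is fibre-wise (the normality computation you spell out is exactly the point the paper leaves as "easy to check"), and apply Lemma~\ref{lem-fibre} at the unit fibre. No gaps.
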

\begin{proof} Let $(B,\beta)$ be any $N$-algebra. Then, by assumption, we have
$\Ind_N^GB\rtimes_{\nu_1}N=\Ind_N^GB\rtimes_{\nu_2}N$. But then the above discussion
together with Lemma \ref{lem-fibre} implies that $B\rtimes_{\nu_1}N=B\rtimes_{\nu_2}N$.
\end{proof}

We are now ready for the main result of this section:

\begin{theorem}\label{thm-ext-normal}
Let $N$ be a closed normal subgroup of the locally compact group $G$.
Then the minimal exact correspondence functor
$\rtimes_{\E_\Cor^N}$ for $N$ is equal to the restriction $\rtimes_{\E^G_\Cor|_N}$ of the minimal
exact correspondence functor $\rtimes_{\E_\Cor^G}$ for $G$.
\end{theorem}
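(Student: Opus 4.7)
The plan begins with the easy direction: by Theorem~\ref{thm-mures}(3)--(4), the restriction $\rtimes_{\E^G_\Cor|_N}$ is itself an exact correspondence functor for $N$, hence minimality of $\rtimes_{\E^N_\Cor}$ among such functors gives $\rtimes_{\E^G_\Cor|_N}\geq \rtimes_{\E^N_\Cor}$. For the opposite domination I appeal to Lemma~\ref{lem-normal3}: as both functors are exact correspondence functors for $N$, it suffices to establish the equality $A\rtimes_{\E^N_\Cor}N = A\rtimes_{\E^G_\Cor|_N}N$ for every \emph{$G$}-algebra $(A,\alpha)$.

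Fix such an $A$. I will construct two $*$-homomorphisms $A\rtimes_{\alpha,\un} N\to \M\bigl((A\otimes C_0(G/N))\rtimes_{\alpha\otimes\tau,\E^G_\Cor}G\bigr)$ and show that they coincide. The first is the canonical embedding of Lemma~\ref{lem-normal2} applied with $\mu=\E^G_\Cor$: namely
\[
A\rtimes_\un N\xrightarrow{q_2}A\rtimes_{\E^G_\Cor|_N}N\xrightarrow{\iota_2}\M\bigl((A\otimes C_0(G/N))\rtimes_{\E^G_\Cor}G\bigr),
\]
with $\iota_2$ faithful. The second routes through $A\rtimes_{\E^N_\Cor}N$: Corollary~\ref{subgroups} supplies a $*$-homomorphism $\phi_1\colon A\rtimes_{\E^N_\Cor}N\to \M(A\rtimes_{\E^G_\Cor}G)$ factoring the canonical map $i_A^\un\rtimes i_G^\un|_N$, while the $G$-equivariant, nondegenerate generalised morphism $j_A\colon A\to \M(A\otimes C_0(G/N))$ given by $a\mapsto a\otimes 1$ descends by Proposition~\ref{id props} and the correspondence-functor property of $\rtimes_{\E^G_\Cor}$ to a nondegenerate homomorphism $j_A\rtimes_{\E^G_\Cor}G\colon A\rtimes_{\E^G_\Cor}G\to \M((A\otimes C_0(G/N))\rtimes_{\E^G_\Cor}G)$; extending the latter to multipliers and precomposing with $\phi_1\circ q_1$ (where $q_1\colon A\rtimes_\un N\onto A\rtimes_{\E^N_\Cor}N$) produces the second map.

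A direct inspection shows that both compositions send $f\in C_c(N,A)$ to the same multiplier $\int_N (f(n)\otimes 1)\,i_G(n)\,dn$ in $\M((A\otimes C_0(G/N))\rtimes_{\E^G_\Cor}G)$: for the first map this is precisely the formula encoded in Lemma~\ref{lem-normal2}, while for the second it follows from the identities $\widetilde{j_A\rtimes_{\E^G_\Cor}G}(i_A(a)) = a\otimes 1$ and $\widetilde{j_A\rtimes_{\E^G_\Cor}G}(i_G(t)) = i_G(t)$ for the multiplier-algebra extension. By density of $C_c(N,A)$ in $A\rtimes_\un N$ the two compositions agree everywhere, and since $\iota_2$ is isometric and $*$-homomorphisms are contractive we get, for every $x\in A\rtimes_\un N$,
\[
\|x\|_{\E^G_\Cor|_N} = \|\iota_2 q_2(x)\| \leq \|q_1(x)\| = \|x\|_{\E^N_\Cor}.
\]
This yields $\rtimes_{\E^N_\Cor}\geq \rtimes_{\E^G_\Cor|_N}$ on $G$-algebras, and Lemma~\ref{lem-normal3} then upgrades the equality to all $N$-algebras.

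The main obstacle I foresee is the careful bookkeeping behind the second map: one must verify that $j_A$ genuinely descends to a nondegenerate $*$-homomorphism on the $\rtimes_{\E^G_\Cor}$-crossed products (which really uses the correspondence-functor property, not just exactness), that its extension to the multiplier algebra has the expected form on the image of the universal covariant pair, and that the two compositions yield the same multiplier on $C_c(N,A)$. Once that identification is secured, the norm estimate and Lemma~\ref{lem-normal3} close the argument without further effort.
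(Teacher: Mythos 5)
Your proposal is correct and follows essentially the same route as the paper: both arguments rest on the faithfulness statement of Lemma~\ref{lem-normal2}, on the fact that representations of the minimal exact $G$\nb-crossed product restrict to the minimal exact $N$\nb-crossed product (you package this as Corollary~\ref{subgroups}, the paper invokes Lemma~\ref{lem-include} with $\mu=\ext\E_\Cor^N$ directly), and on Lemma~\ref{lem-normal3} to pass from $G$\nb-algebras to all $N$\nb-algebras. The only difference is organisational: the paper first shows that the canonical surjection from the restriction of $\rtimes_{\ext\E_\Cor^N}$ onto $\rtimes_{\E_\Cor^N}$ is injective and then deduces the claim for $\rtimes_{\E_\Cor^G}$ by domination, whereas you work with $\rtimes_{\E_\Cor^G}$ from the outset and express the same comparison of factorisations as the norm inequality $\|\cdot\|_{\E_\Cor^G|_N}\leq\|\cdot\|_{\E_\Cor^N}$.
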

\begin{proof}
We show that the restriction of $\rtimes_{\ext \E_\Cor^N}$ to $N$ equals $\rtimes_{\E_\Cor^N}$. Since
$\rtimes_{\ext \E_\Cor^N}$ is an exact correspondence functor by Theorem \ref{thm-ext}
it dominates $\rtimes_{\E_\Cor^G}$. Hence the desired equation would imply that
the restriction of $\rtimes_{\E_\Cor^G}$ to $N$, which is an exact correspondence functor
by Theorem \ref{thm-mures}, would be dominated by $\rtimes_{\E_\Cor^N}$. But then they must
coincide.

By Lemma \ref{lem-normal3} it suffices to show that the restriction of
$\rtimes_{\ext \E_\Cor^N}$ to $N$ coincides with $\rtimes_{\E_\Cor^N}$
on every $G$-\cstar{}algebra $(A,\alpha)$. Let us write $\mu:=\ext \E_\Cor^N$.
Let $(i_A^\mu, i_{C_0(G/N)}^\mu, i_G^\mu)$ denote the canonical maps from $(A,C_0(G/N), G)$ into
$\M((A\otimes C_0(G/N))\rtimes_{\alpha\otimes \tau,\mu}G)$.
It follows then from Lemma \ref{lem-normal2} that $i_A^\mu\rtimes i_G^\mu|_N$ factors through a faithful
\Star{}homomorphism
\begin{equation}\label{eq-map1}
i_A^\mu\rtimes i_G^\mu|_N: A\rtimes_{\alpha,\mu|_N}N\to \M((A\otimes C_0(G/N))\rtimes_{\alpha\otimes \tau,\mu}G).
\end{equation}
On the other hand, we know from Lemma \ref{lem-include} that we have a
homomorphism
$$
(i_A^\mu\otimes i_{C_0(G/N)}^\mu)\rtimes i_G^\mu|_N:
(A\otimes C_0(G/N))\rtimes_{\alpha,\E_\Cor^N}N\to \M((A\otimes C_0(G/N))\rtimes_{\alpha\otimes \tau,\mu}G).
$$
Since $\rtimes_{\mu|_N}$ is an exact crossed-product functor, it follows then from minimality of $\rtimes_{\E_\Cor^N}$
that there is a surjective \Star{}homomorphism $A\rtimes_{\alpha,\mu|_N}N\to A\rtimes_{\alpha,\E_\Cor^N}N$.
Hence we get a chain of \Star{}homomorphisms
\begin{equation}\label{eq-composition}
\begin{split}
A\rtimes_{\alpha,\mu|_N}N&\to A\rtimes_{\alpha,\E_\Cor^N}N
\to (A\otimes C_0(G/N))\rtimes_{\alpha,\E_\Cor^N}N\\
&\to \M((A\otimes C_0(G/N))\rtimes_{\alpha\otimes \tau,\mu}G),
\end{split}
\end{equation}
in which the second homomorphism is given by functoriality for the generalised homomorphism
$A\to \M(A\otimes C_0(G/N)); a\mapsto a\otimes 1$. It is then easy to check on
the generators that the composition of the maps in (\ref{eq-composition}) coincides
with the faithful map (\ref{eq-map1}). Hence the first morphism
$A\rtimes_{\alpha,\mu|_N}N\to A\rtimes_{\alpha,\E_\Cor^N}N$ in (\ref{eq-composition}) must be faithful, too.
Hence the result.
\end{proof}

\section{Some questions}

There are actually many open questions related to the study of exotic  crossed products and, in particular,
the study of the minimal exact correspondence crossed-product functor $\rtimes_{\E^G_\Cor}$.
Many of those have been formulated and discussed in the papers \cite{Baum-Guentner-Willett} and \cite{Buss-Echterhoff-Willett}
and we therefore want to restrict here to questions related to restriction, extension, and induction of functors
from and to subgroups. The first question is quite obvious:

\begin{question}\label{Q1} Suppose $H$ is a closed subgroup of the locally compact group $G$. Is it always true that
the restriction $\rtimes_{\E^G_\Cor|_H}$ of the minimal exact correspondence functor $\rtimes_{\E^G_\Cor}$ for $G$
is equal to the minimal exact correspondence functor $\rtimes_{\E^N_\Cor}$ for $H$?
\end{question}

If the answer is ``yes'' then it follows from Proposition \ref{prop-BCrestrict} that the validity of the reformulated version
of the Baum-Connes conjecture due to Baum, Guentner, and Willett would pass from a group $G$ to any of its
closed subgroups. The previous section gives a positive answer if $H$ is normal in $G$.
At an earlier stage we thought that we have a positive answer at least for open subgroups $H$ of $G$,
but, unfortunately, we found a gap in our arguments. Any progress in this direction would be very appreciated
by the authors.

To formulate our next question, we start with a lemma.

\begin{lemma}\label{lem-decomp-action}
Suppose that $N\subseteq G$ is a closed normal subgroup of $G$ and that
$\alpha:G\to \Aut(A)$ is an action. Then there is an action
$\gamma^\E:G\to \Aut(A\rtimes_{\alpha,\E^N}N)$ (with $\rtimes_{\E^N}:=\rtimes_{\E^N_\Cor}$)
such that for each $g\in G$ the automorphism $\gamma^\E_g$ is given on the level
of $C_c(N,A)$ by the formula
$$ (\gamma^\E_g(f))(n)=\delta_g \alpha_g(f(g^{-1}ng)),$$
where $\delta_g$ denotes the module for the automorphism $C_g(n)=gng^{-1}$ of $N$.
\end{lemma}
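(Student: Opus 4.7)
The plan is to deduce the lemma directly from Lemma~\ref{lem-normal1} combined with Theorem~\ref{thm-ext-normal}. Applying Lemma~\ref{lem-normal1} to the crossed-product functor $\rtimes_\mu := \rtimes_{\E^G_\Cor}$ immediately yields a strongly continuous action $\gamma^\mu\colon G \to \Aut(A\rtimes_{\alpha,\mu|_N}N)$ whose restriction to the dense subalgebra $C_c(N,A)$ is given by exactly the formula stated in Lemma~\ref{lem-decomp-action}. By Theorem~\ref{thm-ext-normal} the restriction $\rtimes_{\E^G_\Cor|_N}$ coincides with the minimal exact correspondence functor $\rtimes_{\E^N_\Cor}$ for $N$, so $\gamma^\mu$ is the sought action $\gamma^\E$ on $A\rtimes_{\alpha,\E^N_\Cor}N$.

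For readers who prefer a more transparent argument bypassing the auxiliary algebra $(A\otimes C_0(G/N))\rtimes_{\alpha\otimes\tau,\mu}G$ that appears in the proof of Lemma~\ref{lem-normal1}, one can proceed directly. I would first observe that, for each $g\in G$, the pair $\varphi_g:=(\alpha_g,C_g)$, where $C_g(n)=gng^{-1}$, is an automorphism of the $N$-system $(A,N,\alpha|_N)$: this uses precisely the normality of $N$, since condition~\eqref{eq-auto} reduces to the identity $\alpha_{gng^{-1}}\circ\alpha_g=\alpha_{gn}=\alpha_g\circ\alpha_n$. Lemma~\ref{lem-automorphisms2} then produces an automorphism $\gamma^\E_g$ of $A\rtimes_{\alpha|_N,\E^N_\Cor}N$, and specialising the formula~\eqref{eq-autom} with $\varphi_A=\alpha_g$, $\varphi_G=C_g$, and module $\delta_g$ gives exactly the expression in the statement.

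The only step requiring any real care in this second approach is verifying that $g\mapsto \gamma^\E_g$ is a strongly continuous group homomorphism. The homomorphism property $\gamma^\E_{gh}=\gamma^\E_g\circ\gamma^\E_h$ is immediate from the defining formula on $C_c(N,A)$ (using that $g\mapsto\delta_g$ is itself a homomorphism because $C_{gh}=C_g\circ C_h$), and then extends to the whole crossed product by uniqueness of the extension guaranteed by Lemma~\ref{lem-automorphisms2}. For strong continuity, the plan is to first note that for fixed $f\in C_c(N,A)$ the map $g\mapsto\gamma^\E_g(f)$ is continuous from $G$ into $C_c(N,A)$ in the inductive-limit topology, and hence in the $\E^N_\Cor$-norm; this reduces to continuity of $g\mapsto \alpha_g$ on $A$ and of conjugation $g\mapsto C_g$ on $N$. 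Since each $\gamma^\E_g$ is isometric and $C_c(N,A)$ is norm-dense in $A\rtimes_{\alpha,\E^N_\Cor}N$, a standard three-$\varepsilon$ argument extends strong continuity to the whole crossed product. This final density-plus-isometry step is the main (minor) obstacle.
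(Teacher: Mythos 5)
Your proposal is correct, and your second, ``more transparent'' argument is in fact exactly the proof the paper gives: one applies Lemma~\ref{lem-automorphisms2} to the automorphism $(\alpha_g,C_g)$ of the system $(A,N,\alpha|_N)$ (normality of $N$ being what makes $C_g$ an automorphism of $N$ and what makes condition~\eqref{eq-auto} hold), checks the homomorphism property on $C_c(N,A)$ using multiplicativity of $g\mapsto\delta_g$, and obtains strong continuity from continuity of $g\mapsto\gamma^\E_g(f)$ in the inductive-limit topology together with density of $C_c(N,A)$ and the fact that each $\gamma^\E_g$ is isometric. All of these verifications are as routine as you indicate.

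Your first argument is a genuinely different route and is also valid: Lemma~\ref{lem-normal1} applied to $\rtimes_\mu=\rtimes_{\E^G_\Cor}$ produces the action on $A\rtimes_{\alpha,\E^G_\Cor|_N}N$, and Theorem~\ref{thm-ext-normal} identifies that algebra with $A\rtimes_{\alpha,\E^N_\Cor}N$; there is no circularity, since Theorem~\ref{thm-ext-normal} rests on Lemmas~\ref{lem-normal1}--\ref{lem-normal3} and not on Lemma~\ref{lem-decomp-action}. What this route buys is that it works for the restriction $\rtimes_{\mu|_N}$ of \emph{any} $G$-crossed-product functor (this is precisely the content of Lemma~\ref{lem-normal1}), but at the cost of invoking the main theorem of the preceding section merely to re-identify the underlying \cstar{}algebra. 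The paper's route (your second one) is self-contained and exploits the specific fact, special to $\rtimes_{\E^N_\Cor}$ and established in Lemma~\ref{lem-automorphisms2} via minimality, that automorphisms of the system $(A,N,\alpha|_N)$ pass to the $\E^N_\Cor$-crossed product --- something that, as Example~\ref{ex-autom} shows, can fail for general functors on $N$. Either argument is acceptable here.
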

\begin{proof}
It follows directly from Lemma \ref{lem-automorphisms2} applied to the automorphism
$\gamma_g=(\alpha_g, C_g)$ of $(A,N,\alpha)$  that the
automorphisms $\gamma^\E_g$ exist for all $g\in G$ and one easily checks that
$g\mapsto \gamma^\E_g$ is a homomorphism. Since for each $f\in C_c(N,A)$
the map $G\mapsto C_c(N,A); f\mapsto \gamma_g^\E(f)$ is continuous with respect to the
inductive limit topology on $C_c(N,A)$, the action $\gamma^E:G\to \Aut(A\rtimes_{\alpha,\E^N}N)$
is strongly continuous.
\end{proof}

\begin{remark} Note that there are analogous actions $\gamma^\un:G\to \Aut(A\rtimes_{\alpha,\un}N)$
and  $\gamma^\red:G\to \Aut(A\rtimes_{\alpha,\red}N)$ extending the action on $C_c(N,A)$ as
in the above lemma. These actions are known as the {\em decomposition actions} of
 $G$ on the respective crossed products by $N$. Indeed, if
  $i_N^\un: N\to U\M(A\rtimes_{\alpha,\un}N)$ and
 $i_N^\red: N\to U\M(A\rtimes_{\alpha,\red}N)$ are the canonical maps, then
 the pairs $(\gamma^\un, i_N^\un)$ and $(\gamma^\red, i_N^\red)$ are twisted actions
 in the sense of Green (\cite{Green:Local_twisted})
 of the pair $(G,N)$ on $A\rtimes_{\alpha,\un}N$ and $A\rtimes_{\alpha,\red}N$, respectively,
 and we get   canonical isomorphisms
 $$(A\rtimes_{\alpha,\un}N)\rtimes_{(\gamma^\un, i_N^\un),\un}(G,N)\cong A\rtimes_{\alpha,\un}G
 \;\text{and}\;
(A\rtimes_{\alpha,\red}N)\rtimes_{(\gamma^\red, i_N^\un),\red}(G,N)\cong A\rtimes_{\alpha,\red}G$$
(see \cite[Proposition 1]{Green:Local_twisted} for the full crossed products and
\cite[Proposition 5.2]{Kirchberg-Wassermann:Permanence} for the reduced case).
  Of course, if $i_N^\E:N\to U\M(A\rtimes_{\alpha,\E^N_\Cor}N)$ denotes the canonical map, then
  $(\gamma^\E, i_N^\E)$ is  also a Green-twisted action of $(G,N)$  on $A\rtimes_{\alpha,\E^N_\Cor}N$
  and we may wonder whether a similar decomposition isomorphism holds for the minimal
  exact correspondence functors. Note that by \cite[Theorem 1]{Echterhoff:Morita_twisted} we know that
  every twisted action of $(G,N)$ is Morita equivalent to an ordinary action of $G/N$.
 Using this it is not difficult to extend any strongly Morita compatible crossed-product functor for $G/N$ to the
  category of twisted $(G,N)$-algebras (e.g., see \cite[Definition 4.13]{Buss-Echterhoff:maximal-dual}).
    \end{remark}

  Thus it is natural to ask

\begin{question}\label{Q2}
Suppose $(A,\alpha)$ is a $G$-algebra and $N$ is a normal subgroup of $G$.
  Is there always  a canonical decomposition isomorphism
  $$(A\rtimes_{\alpha,\E^N_\Cor}N)\rtimes_{\gamma^\E, i_N^\E, \E^{G/N}_\Cor}(G,N)\congto
  A\rtimes_{\alpha, \E^G_\Cor}G?$$
  Is it at least true if $G=N\times H$ is the direct product of two closed subgroups?
  \end{question}

    Unfortunately we did not succeed in proving  such result even in the case where $G$ is a
  direct product $N\times H$.

Note that the candidate for
the isomorphism in Question \ref{Q2} can be described on the level of functions with compact supports,
but the precise description is a bit  tedious and we  refer to \cite[Proposition 1]{Green:Local_twisted} for the
details. In case where $G=N\rtimes H$ is a semi-direct product group, the twisted action of $G/N\cong H$
can be replaced by an untwisted crossed product by $H$ and the desired decomposition isomorphism should
then be given on the level of compactly supported continuous functions by
\begin{equation}\label{eq-Q2}
\Phi: C_c(N\times H,A)\to C_c(H, C_c(N,A));\; \Phi(f)(h)=f(\cdot, h).
\end{equation}

We believe that positive answers to Questions \ref{Q1} and \ref{Q2} would give the main steps
for  extending the permanence
results for the classical Baum-Connes conjecture as obtained in \cite{Chabert-Echterhoff:Permanence} to the
reformulated conjecture in the sense of Baum, Guentner, and Willett.
Note that we always have $\rtimes_{\E^N_\Cor}=\rtimes_{\E^G_\Cor|_N}$ by Theorem \ref{thm-ext-normal}.
  If $G=N\times H$ it follows also that $\rtimes_{\E^H_\Cor}=\rtimes_{\E^G_\Cor|_H}$. Thus in this case the
  above question may be generalised to

  \begin{question}\label{Q3}
Suppose $(A,\alpha)$ is an $N\times H$-algebra and $\rtimes_\mu$ is a crossed-product functor
for $G=N\times H$. Let $\Phi: C_c(N\times H,A)\to C_c(H, C_c(N,A))$ be given by $\Phi(f)(h)=f(\cdot, h)$.
Does $\Phi$ always extend to a \Star{}isomorphism
$$A\rtimes_{\alpha, \mu}(N\times H)\congto (A\rtimes_{\alpha^N, \mu|_N}N)\rtimes_{\alpha^H, \mu|_H}H?$$
\end{question}

The answer is positive for the full and reduced crossed products
but we know very little about the general case. Note that it follows from our Lemma \ref{lem-normal1} that
for any crossed-product functor $\rtimes_\mu$ for $G$, any  closed normal subgroup $N\subseteq G$, and
any $G$-algebra $(A,\alpha)$ there is a twisted action $(\gamma^\mu, i_N^{\mu|_N})$ of $(G,N)$
on $A\rtimes_{\alpha, \mu|_N}N$. Hence any crossed product functor $\rtimes_\nu$ for $G/N$
allows a decomposition crossed product $(A\rtimes_{\alpha,\mu|_N}N)\rtimes_{(\gamma^\mu, i_N^{\mu|_N}), \nu}(G,N)$.
Thus we may ask:

\begin{question}\label{Q4}
Given a crossed-product functor $\rtimes_\mu$ for $G$. Is there always an associated
 crossed-product functor $\rtimes_\nu$ for $G/N$ such that
 $$(A\rtimes_{\alpha,\mu|_N}N)\rtimes_{(\gamma^\mu, i_N^{\mu|_N}), \nu}(G,N)\cong A\rtimes_{\alpha,\mu}G?$$
 \end{question}

 It is actually not clear to us how to relate crossed-product functors for $G/N$ to crossed-product functors for $G$
 in a ``canonical'' way if $G$ does not decompose as a semi-direct product $N\rtimes H$.

\vskip 2pc

\begin{bibdiv}
 \begin{biblist}

\bib{Archbold-Spielberg}{article}{
    AUTHOR = {Archbold, R. J.},
     author= {Spielberg, J. S.},
     TITLE = {Topologically free actions and ideals in discrete {$C^*$}-dynamical systems},
   JOURNAL = {Proc. Edinburgh Math. Soc. (2)},
    VOLUME = {37},
      YEAR = {1994},
    NUMBER = {1},
     PAGES = {119--124},
      ISSN = {0013-0915},
      review={\MRref{1258035}{94m:46101}},
       DOI = {10.1017/S0013091500018733},
       URL = {http://dx.doi.org/10.1017/S0013091500018733},
}

\bib{Baum-Connes-Higson:BC}{article}{
  author={Baum, Paul},
  author={Connes, Alain},
  author={Higson, Nigel},
  title={Classifying space for proper actions and \(K\)\nobreakdash-theory of group \cstar{}algebras},
  conference={
    title={\cstar{}Algebras: 1943--1993},
    address={San Antonio, TX},
    date={1993},
  },
  book={
    series={Contemp. Math.},
    volume={167},
    publisher={Amer. Math. Soc.},
    place={Providence, RI},
  },
  date={1994},
  pages={240--291},
  review={\MRref{1292018}{96c:46070}},
}

\bib{Baum-Guentner-Willett}{article}{
  author={Baum, Paul},
 author={Guentner, Erik},
 author={Willett, Rufus},
  title={Expanders, exact crossed products, and the Baum-Connes conjecture},
  status={To appear, \emph{Annals of K-theory}},
  note={\arxiv {1311.2343}},
  date={2013},
}

\bib{Brown-Guentner:New_completions}{article}{
  author={Brown, Nathanial P.},
  author={Guentner, Erik},
  title={New \cstar{}completions of discrete groups and related spaces},
  journal={Bull. Lond. Math. Soc.},
  volume={45},
  year={2013},
  pages={1181-1193},
  review={\MRref{3138486}{}},
}

\bib{Brown-Ozawa}{book}{
    AUTHOR = {Brown, Nathanial P.},
    author={Ozawa, Narutaka},
     TITLE = {{$C^*$}-algebras and finite-dimensional approximations},
    SERIES = {Graduate Studies in Mathematics},
    VOLUME = {88},
 PUBLISHER = {American Mathematical Society, Providence, RI},
      YEAR = {2008},
     PAGES = {xvi+509},
      ISBN = {978-0-8218-4381-9; 0-8218-4381-8},
      review={\MRref{2391387}{2009h:46101}},
}

\bib{Buss-Echterhoff:Exotic_GFPA}{article}{
  author={Buss, Alcides},
  author={Echterhoff, Siegfried},
  title={Universal and exotic generalized fixed-point algebras for weakly proper actions and duality},
 journal={Indiana Univ. Math. J.},
  date={2014},
   volume ={63},
   pages={1659--1701},
   doi={10.1512/iumj.2014.63.5405},
}

\bib{Buss-Echterhoff:Imprimitivity}{article}{
  author={Buss, Alcides},
  author={Echterhoff, Siegfried},
  title={Imprimitivity  theorems for weakly proper actions of locally compact groups},
    journal={Ergodic Theory Dynam. Systems},
    doi={10.1017/etds.2014.36},
}

\bib{Buss-Echterhoff:maximal-dual}{article}{
  author={Buss, Alcides},
  author={Echterhoff, Siegfried},
  title={Maximality of dual coactions on sectional \cstar{}algebras of Fell bundles and applications},
    status={preprint},
  date={2015},
  note={\arxiv{1507.01756}},
}

\bib{Buss-Echterhoff-Willett}{article}{
  author={Buss, Alcides},
  author={Echterhoff, Siegfried},
  author={Willett, Rufus},
  title={Exotic crossed products and the Baum-Connes conjecture},
    journal={to appear, J. Reine Angew. Math.},
    year={2015},
}

\bib{Chabert-Echterhoff:Permanence}{article}{
    AUTHOR = {Chabert, J{\'e}r{\^o}me},
    author= {Echterhoff, Siegfried},
     TITLE = {Permanence properties of the {B}aum-{C}onnes conjecture},
   JOURNAL = {Doc. Math.},
    VOLUME = {6},
      YEAR = {2001},
     PAGES = {127--183 (electronic)},
      ISSN = {1431-0635},
      review={\MRref{1836047}{2002h:46117}},
}

\bib{Cuntz:Kamenable}{article}{
    AUTHOR = {Cuntz, Joachim},
     TITLE = {{$K$}-theoretic amenability for discrete groups},
   JOURNAL = {J. Reine Angew. Math.},
    VOLUME = {344},
      YEAR = {1983},
     PAGES = {180--195},
      ISSN = {0075-4102},
       DOI = {10.1515/crll.1983.344.180},
       URL = {http://dx.doi.org/10.1515/crll.1983.344.180},
       review={\MRref{716254}{86e:46064}},
}

\bib{Echterhoff:Crossed}{article}{
  author={Echterhoff, Siegfried},
  title={Crossed products, the Mackey-Rieffel-Green machine and applications},
  status={eprint},
  note={\arxiv {1006.4975v2}},
  date={2010},
  }

\bib{Echterhoff:Morita_twisted}{article}{
  author={Echterhoff, Siegfried},
  title={Morita equivalent twisted actions and a new version of the Packer--Raeburn stabilization trick},
  journal={J. London Math. Soc. (2)},
  volume={50},
  date={1994},
  number={1},
  pages={170--186},
  issn={0024-6107},
  review={\MRref{1277761}{96a:46118}},
  doi={10.1112/jlms/50.1.170},
}

\bib{Echterhoff-Kaliszewski-Quigg-Raeburn:Categorical}{article}{
  author={Echterhoff, Siegfried},
  author={Kaliszewski, Steven P.},
  author={Quigg, John},
  author={Raeburn, Iain},
  title={A categorical approach to imprimitivity theorems for $C^*$\nobreakdash -dynamical systems},
  journal={Mem. Amer. Math. Soc.},
  volume={180},
  date={2006},
  number={850},
  pages={viii+169},
  issn={0065-9266},
  review={\MRref {2203930}{2007m:46107}},
}

\bib{Eymard}{article}{
    AUTHOR = {Eymard, Pierre},
     TITLE = {L'alg\`{e}bre de Fourier d'un groupe localement compact},
   JOURNAL = {Bull. Soc. Math. France},
    VOLUME = {92},
      YEAR = {1964},
     PAGES = {181--236},
     review={\MRref{0148801}{26 \#6305}},
}

\bib{Green:Local_twisted}{article}{
    AUTHOR = {Green, Philip},
     TITLE = {The local structure of twisted covariance algebras},
   JOURNAL = {Acta Math.},
    VOLUME = {140},
      YEAR = {1978},
    NUMBER = {3-4},
     PAGES = {191--250},
      ISSN = {0001-5962},
      review={\MRref{0493349}{58 \#12376}},
}

\bib{Gromov:random}{article}{
  author={Gromov, Mikhael},
  title={Random walks in random groups},
  JOURNAL = {Geom. Funct. Anal.},
    VOLUME = {13},
      YEAR = {2003},
     PAGES = {73--146},
     review={\MRref{1978492}{2004j:20088a}},
}


\bib{Haagerup}{article}{
    AUTHOR = {Haagerup, Uffe},
     TITLE = {An example of a non nuclear \cstar{}algebra that has the metric approximation property},
   JOURNAL = {Invent. Math.},
    VOLUME = {50},
      YEAR = {1979},
     PAGES = {289--293},
     review={\MRref{0520930}{80j:46094}},
}

\bib{Howe}{book}{
    AUTHOR = {Howe, Roger},
    AUTHOR={Tan, Eng Chye},
     TITLE = {Non-abelian harmonic analysis},
    SERIES = {Universitext},
 PUBLISHER = {Springer-Verlag},
   ADDRESS = {New York, NY},
      YEAR = {1992},
     PAGES = {xv+357},
      ISBN = {0-387-97768-6; 3-540-97768-6},
      review={\MRref{1151617}{93f:22009}},
}

\bib{Higson-Lafforgue-Skandalis}{article}{
    AUTHOR = {Higson, Nigel},
    AUTHOR={Lafforgue, Vincent},
    author={Skandalis, Georges},
     TITLE = {Counterexamples to the {B}aum-{C}onnes conjecture},
   JOURNAL = {Geom. Funct. Anal.},
    VOLUME = {12},
      YEAR = {2002},
    NUMBER = {2},
     PAGES = {330--354},
      ISSN = {1016-443X},
      review={\MRref{1911663}{2003g:19007}},
       DOI = {10.1007/s00039-002-8249-5},
       URL = {http://dx.doi.org/10.1007/s00039-002-8249-5},
}

\bib{Hulanicki}{article}{
  author={Hulanicki, Andrezj},
  title={Means and F\o{}lner conditions on locally compact groups},
  journal={Studia Math.},
  volume={27},
  date={1966},
  pages={87--104},
  review={\MRref {0195982}{33 \#4178}},
}

\bib{Jolissaint}{article}{
    AUTHOR = {Jolissaint, Paul},
     TITLE = {Notes on ${C_0}$ representations and the {H}aagerup property},
   JOURNAL = {Bull. Belg. Math. Soc. Simon Stevin},
    VOLUME = {21},
      YEAR = {2014},
	PAGES = {263--274},
	review={\MRref{3211015}{}},
   }

\bib{Julg-Valette:Kamenable}{article}{
    AUTHOR = {Julg, Pierre},
    author=  {Valette, Alain},
     TITLE = {{$K$}-theoretic amenability for {${\rm SL}_{2}({\bf Q}_{p})$}, and the action on the associated tree},
   JOURNAL = {J. Funct. Anal.},
    VOLUME = {58},
      YEAR = {1984},
    NUMBER = {2},
     PAGES = {194--215},
      ISSN = {0022-1236},
      review={\MRref{757995}{86b:22030}},
       DOI = {10.1016/0022-1236(84)90039-9},
       URL = {http://dx.doi.org/10.1016/0022-1236(84)90039-9},
}

\bib{Kaliszewski-Landstad-Quigg:Exotic}{article}{
  author={Kaliszewski, Steven P.},
  author={Landstad, Magnus B.},
  author={Quigg, John},
  title={Exotic group $C^*$\nobreakdash-algebras in noncommutative duality},
  journal={New York J. Math.},
  volume={19},
  date={2013},
  pages={689--711},
  issn={1076-9803},
  review={\MRref{3141810}{}},
  eprint={http://nyjm.albany.edu/j/2013/19_689.html},
}

\bib{Kaliszewski-Landstad-Quigg:Exotic-coactions}{article}{
  author={Kaliszewski, Steven P.},
  author={Landstad, Magnus B.},
  author={Quigg, John},
  title={Exotic coactions},
  status={preprint},
  date={2013},
  note={\arxiv{1305.5489}},
}

\bib{Kaliszewski-Landstad-Quigg:coaction-functors}{article}{
  author={Kaliszewski, Steven P.},
  author={Landstad, Magnus B.},
  author={Quigg, John},
  title={Coaction functors},
  status={preprint},
  date={2015},
  note={\arxiv{1505.03487}},
}

\bib{Kerr}{article}{
  title={\cstar{}algebras and topological dynamics: finite approximations and paradoxicality},
  author={Kerr, David},
  date={2011},
  note={Available on the author's website},
}

\bib{Kirchberg-Wassermann}{article}{
 author={Kirchberg, Eberhard},
 author={Wassermann, Simon},
 title={Exact groups and continuous bundles of \cstar{}algebras},
 journal={Math. Ann.},
 volume={315},
 date={1999},
 pages={169--203},
 review={\MRref{1721796}{2000i:46050}},
}

\bib{Kirchberg-Wassermann:Permanence}{article}{
 author={Kirchberg, Eberhard},
 author={Wassermann, Simon},
 title={Permanence Properties of C*-exact groups},
   JOURNAL = {Doc. Math.},
    VOLUME = {4},
      YEAR = {1999},
     PAGES = {513--558 (electronic)},
      ISSN = {1431-0635},
       review={\MRref{1725812}{2001i:46089}},
}

\bib{Kunze-Stein}{article}{
 author={Kunze, Ray},
 author={Stein, Elias},
 title={Uniformly bounded representations and harmonic analysis of the $2\times 2$ real unimodular group},
   JOURNAL = {Amer. J. Math.},
    VOLUME = {82},
      YEAR = {1960},
     PAGES = {1--62},
      review={\MRref{0163988}{29 \#1287}},,
}

\bib{Milicic}{article}{
 author={Mili\v{c}i\'{c}, Dragan},
 title={Topological representation of the group \cstar{}algebra of $SL(2,\mathbb{R})$},
 journal={Glasnik Mat. Ser. III},
 volume={6(26)},
 date={1971},
 pages={231--246},
 review={\MRref{0308795}{46 \#7909}},
}

\bib{Okayasu:Free-group}{article}{
    AUTHOR = {Okayasu, Rui},
     TITLE = {Free group {C}*-algebras associated with {$\ell\sb p$}},
   JOURNAL = {Internat. J. Math.},
    VOLUME = {25},
      YEAR = {2014},
    NUMBER = {7},
     PAGES = {1450065 (12 pages)},
      ISSN = {0129-167X},
       DOI = {10.1142/S0129167X14500657},
      review={\MRref{3238088}{}},
}

\bib{Osajda}{article}{
    AUTHOR = {Osajda, Damian},
     TITLE = {Small cancellation labellings of some infinite graphs and applications},
           YEAR = {2014},
           note={\arxiv {1406.5015}}
}

\bib{Scaramuzzi}{article}{
AUTHOR = {Scaramuzzi, R.},
     TITLE = {A notion of rank for unitary representations of general linear groups},
   JOURNAL = {Trans. Amer. Math. Soc.},
    VOLUME = {319},
      YEAR = {1990},
    NUMBER = {1},
     PAGES = {349--379},
      review={\MRref{0958900}{90i:22032}},,
}

\bib{Williams:Crossed}{book}{
    AUTHOR = {Williams, Dana P.},
     TITLE = {Crossed products of {$C{^\ast}$}-algebras},
    SERIES = {Mathematical Surveys and Monographs},
    VOLUME = {134},
 PUBLISHER = {American Mathematical Society},
   ADDRESS = {Providence, RI},
      YEAR = {2007},
     PAGES = {xvi+528},
      ISBN = {978-0-8218-4242-3; 0-8218-4242-0},
 review={\MRref{2288954}{2007m:46003}},
 }

\bib{Wiersma}{article}{
 Author = {Wiersma, Matthew},
 title = {$L^p$-Fourier and Fourier-Stieltjes algebras for locally compact groups},
   status={eprint},
  note={\arxiv {1409.2787v1}},
  date={2014},
}
 \end{biblist}
\end{bibdiv}

\end{document}